\DeclareMathAlphabet{\pazocal}{OMS}{zplm}{m}{n}
\newcommand{\maptoR}[1]{\rightarrow\mathbb{R}^{#1}}
\theoremstyle{thmstyleone}%
\newtheorem{theorem}{Theorem}[section] % meant for sectionwise numbers
\newtheorem{prop}[theorem]{Proposition}% 
\newtheorem{lem}[theorem]{Lemma}
\theoremstyle{thmstyletwo}%
\newtheorem{rem}{Remark}[section]%
\newtheorem{conv}{Convention}[section]
\newtheorem{notn}{Notation}[section]
\theoremstyle{thmstylethree}%
\newtheorem{dfn}{Definition}%
\begin{document}

\title[Article Title]{Length-minimizing LED trees}

%%=============================================================%%
%% GivenName	-> \fnm{Joergen W.}
%% Particle	-> \spfx{van der} -> surname prefix
%% FamilyName	-> \sur{Ploeg}
%% Suffix	-> \sfx{IV}
%% \author*[1,2]{\fnm{Joergen W.} \spfx{van der} \sur{Ploeg} 
%%  \sfx{IV}}\email{iauthor@gmail.com}
%%=============================================================%%

\author*[1]{\sur{Mariana Sarkociov\'{a} Reme\v{s}\'{\i}kov\'{a}}}\email{mariana.sarkociova@gmail.com}

\author[1]{\sur{Peter Sarkoci}}\email{sarkoci@math.sk}

\author[2]{\sur{M\' aria Trnovsk\' a}}\email{Maria.Trnovska@fmph.uniba.sk}

\affil[1]{\orgdiv{Department of Mathematics and Constructive Geometry}, \orgname{Faculty of Civil Engineering, Slovak University of Technology in Bratislava}, \orgaddress{\street{Radlinsk\' eho 11}, \city{Bratislava}, \postcode{81005}, \country{Slovakia}}}
\affil[2]{\orgdiv{Department of Applied Mathematics and Statistics}, \orgname{Faculty of Mathematics, Physics and Informatics}, \orgaddress{\street{Mlynsk\' a Dolina F1}, \city{Bratislava}, \postcode{84248}, \country{Slovakia}}}

\abstract{In this paper, we introduce a specific type of Euclidean tree called LED (Leaves of Equal Depth) tree. LED trees can be used in computational phylogeny, since they are a natural representative of the time evolution of a set of species in a feature space. This work is focused on LED trees that are length minimizers for a given set of leaves (species) and a given isomorphism type (the hierarchical structure of ancestors). The underlying minimization problem can be seen as a variant of the classical Euclidean Steiner tree problem. Even though it has a convex objective function, it is rather non-trivial, since it has a non-convex feasible set. The main contribution of this paper is that we provide a uniqueness result for this problem. Moreover, we explore some geometrical and topological properties of the feasible set and we prove several geometrical characteristics of the length minimizers that are analogical to the properties of Steiner trees. At the end, we show a simple example of an application in historical linguistics.}

\keywords{LED tree, Euclidean tree, Steiner tree, Euclidean graph, computational phylogeny, phylogenetic tree, length-minimizing tree, language evolution.}

\pacs[MSC Classification]{05C10, 05C90, 00A69, 90C26}

\maketitle

\section{Introducing LED trees}\label{secIntroduction}

A LED tree is a special type of Euclidean graph that is built from a {\it rooted tree} -- a graph $G=(V,E,R)$, where $R\in V$ is designated as the root. A {\it Euclidean rooted tree} can be defined as a pair $\Psi=(G,\psi)$, where $G$ is a rooted tree and $\psi\colon V\maptoR{n}$ is an arbitrary map. The vertices of $\Psi$ are the images of the elements of $V$ and the role of the root is naturally assigned to the vertex $\psi(R)$. Since $\mathbb{R}^n$ is equipped with its standard metric, we can measure the length of any edge of $\Psi$ by evaluating the distance between its adjacent vertices. 

The definition of a LED tree relies on several graph-theoretical concepts. Since this paper does not have a purely graph-theoretical character and it might have audience from different fields, we will now briefly mention these concepts together with some other ones that will be widely used throughout the text. 

\begin{itemize}
	\item{A {\it leaf} of a tree is any vertex of degree one (with just one adjacent edge). Any other vertex is called an {\it inner vertex}.}
	\item{The {\it root path} of a vertex of a rooted tree is the unique path that connects the vertex with the root.}
	\item{The {\it depth} of a vertex of a rooted tree is the length of its root path. In the non-Euclidean case, this is just the number of edges included in the path and for a Euclidean rooted tree, the depth is the standard Euclidean length of the path. } 
	\item{A {\it leaf path} is any downward path that connects the vertex with a leaf.  A {\it downward path} is any path where each vertex has a higher depth than its predecessor. }
	\item{The {\it height} of a {\it vertex} is the length of its longest leaf path. The {\it height} of the {\it tree} is the height of its root.}
	\item{The {\it parent} of a vertex is its successor on the root path. A {\it child} of a vertex is its successor on any of its leaf paths. }
	\item{The set of all Euclidean trees can be split into equivalence classes, where two Euclidean trees  $\Psi_1=(G_1,\psi_1)$ and $\Psi_2=(G_2,\psi_2)$ are equivalent, if $G_1$ and $G_2$ are isomorphic. A corresponding equivalence class is called {\it isomorphism type} of the tree.}
\end{itemize}

Now we are ready to introduce the key concept of our work.

\begin{dfn}\label{DefLEDTree}
	A Euclidean rooted tree is called a {\em LED (Leaves of Equal Depth) tree}, if all its leaves have the same depth.
\end{dfn}

Some examples of two-dimensional LED trees are shown in Fig. \ref{FigLEDTrees}. In further sections of this text, we will explore in detail some of their interesting properties. Within the introduction, we would like to point out just one of them, which we consider to be the most striking to the eye. Let us take any vertex $v$ and let us consider the subtree containing all leaf paths of $v$. If we consider $v$ to be the root of that subtree, then the subtree is actually also a LED tree. Thus any LED tree has a recursive structure -- it contains as many LED subtrees as it has vertices. 

\begin{figure}[h]
	\centering
	\includegraphics[width=0.3\textwidth]{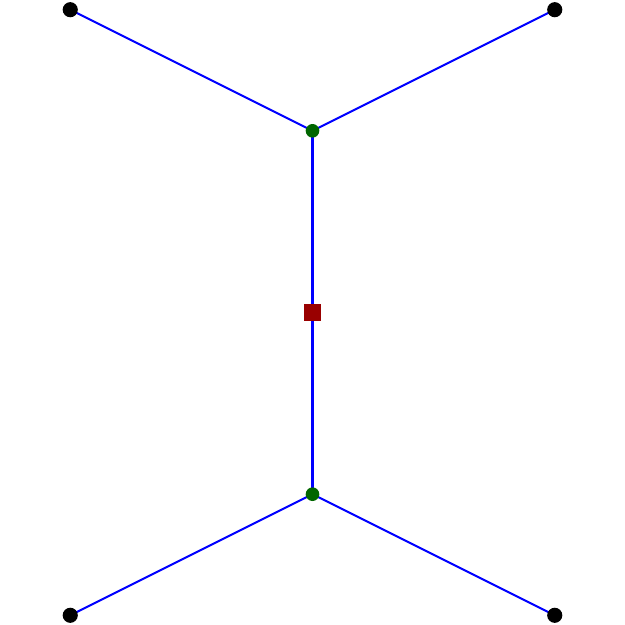}
	\includegraphics[width=0.3\textwidth]{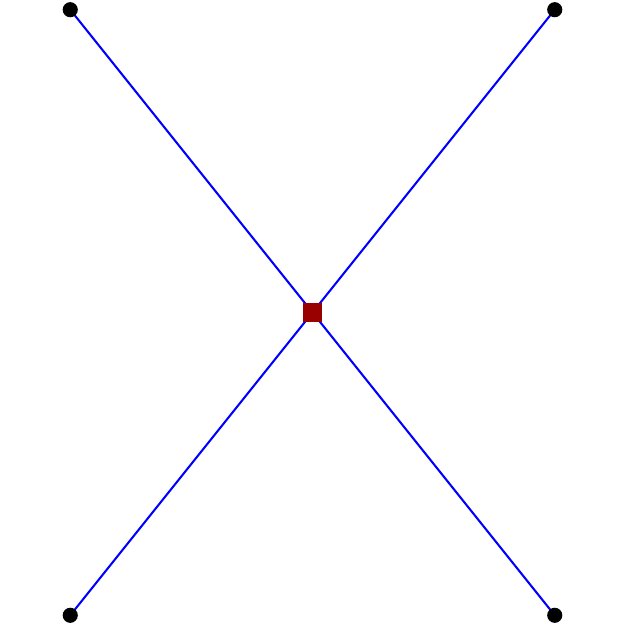}
	\includegraphics[width=0.3\textwidth]{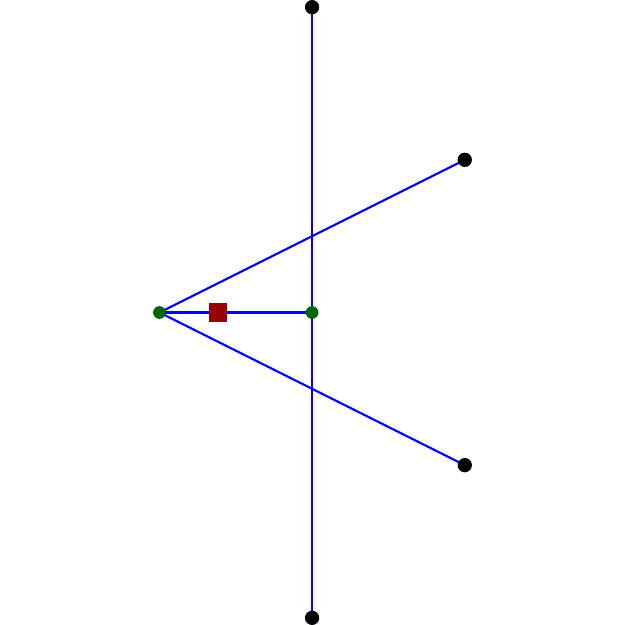} \vspace{0.3cm} \\
	\includegraphics[width=0.3\textwidth]{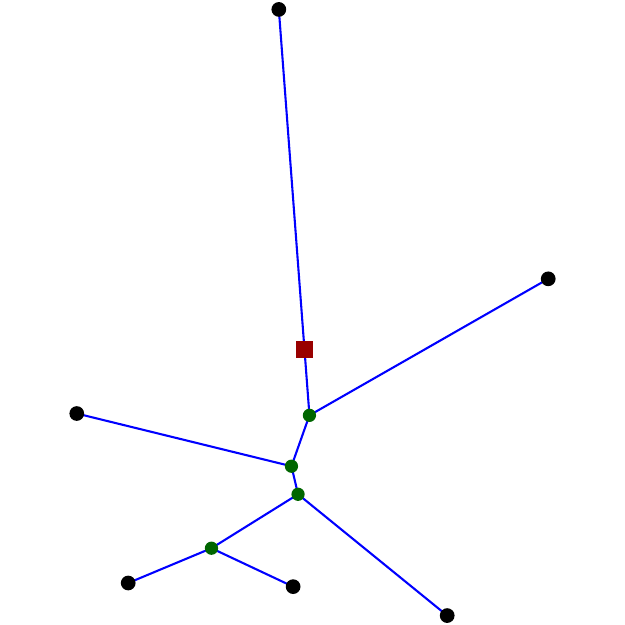}
	\includegraphics[width=0.3\textwidth]{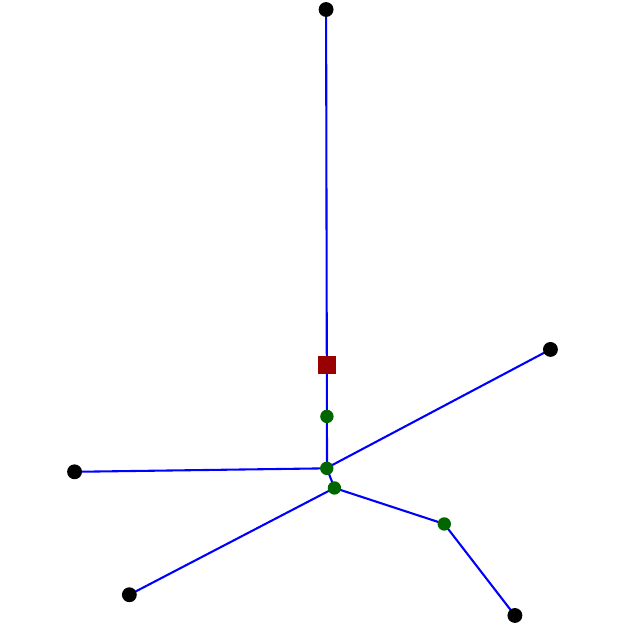}
	\includegraphics[width=0.3\textwidth]{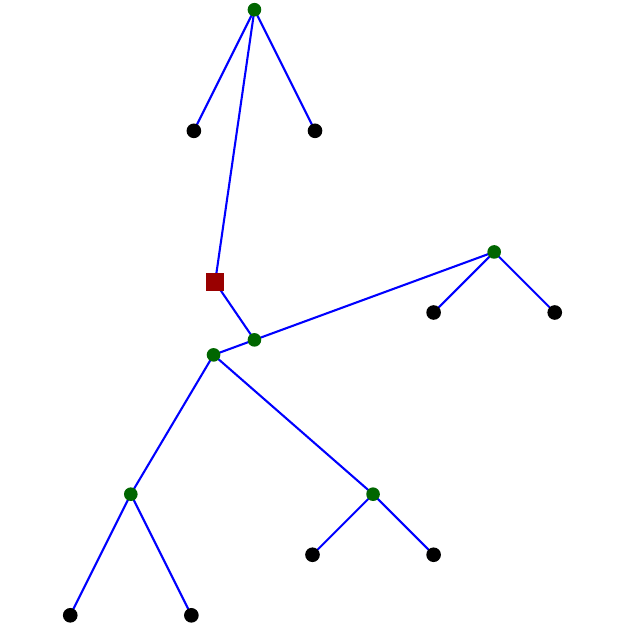}
	\caption{Examples of two-dimensional LED trees. The root of each tree is marked by a square.}
	\label{FigLEDTrees}
\end{figure}

When researching relevant sources, we did not find any that deal with exactly the same type of trees as we present. The most closely related concept that we came across is the S-graph introduced by Buckley and Lewinter \cite{BL}. The definition of this type of graph is based on the notion of eccentricity. In any kind of graph, the {\it eccentricity} of a vertex $v$ is defined as the maximum graph distance between $v$ and any other vertex in the graph. In general, there can be an arbitrary number of vertices located at the maximum distance from $v$; all of them are called vertices {\it eccentric} to $v$. Further, a vertex of minimum eccentricity is called {\it central} and a vertex of maximum eccentricity is called {\it peripheral}. In an {\it S-graph}, the set of peripheral vertices is the same as the set of vertices which are eccentric to any of the central vertices. Now if we consider only trees, then all vertices eccentric to any given vertex are always leaves. In a LED tree, all leaves have the same eccentricity and they are all peripheral. At the same time, there is no other peripheral vertex. A central vertex is the image of any element of $V$ that maps onto $\psi(R)$. The set of vertices eccentric to central vertices is again the set of leaves. Therefore, a LED tree is always a Euclidean S-graph (or, more specifically, S-tree). However, the two concepts are not equivalent, since not all Euclidean S-trees are LED trees.  Moreover, even though we found a few other authors who studied S-graphs \cite{AlAd,Gli1,Gli2,Kys}, so far we have not found any research exploring them in the Euclidean setting or specifically in the realm of trees. 

To conclude the introduction, we define another important concept that will help us explain the topic this work. 

\begin{dfn}\label{defHangingType}
	Let $\Psi=(G_{\Psi},\psi)$ and $\Phi=(G_{\Phi},\phi)$ be two Euclidean trees (rooted or not) and let $L_{\Psi}$ and $L_{\Phi}$ be the leaf sets of $G_{\Psi}$ and $G_{\Phi}$. Further, let $\psi_L=\psi\restriction_{L_{\Psi}}$ and $\phi_L=\phi\restriction_{L_{\Phi}}$. We define the following equivalence relation on the set of all Euclidean trees: $\Psi\sim\Phi$ if and only if there is a graph isomorphism $f$ mapping the vertices of $G_{\Psi}$ to the vertices of $G_{\Phi}$ such that 
	\[ \psi_L=\phi_L\circ\left(f\restriction_{L_{\Psi}}\right). \]
	A corresponding equivalence class is called a {\em hanging type}. 
\end{dfn}

The term ``hanging type'' arose from a visual image of an element of an equivalence class given by Definition \ref{defHangingType}: we take a tree of a given isomorphism type and we ``hang it by leaves'' in a Euclidean space -- its leaves are fixed to prescribed positions. The rest of the vertices are not fixed and they can be placed arbitrarily. 

% *****************************************************************************

% *****************************************************************************

\section{LED trees as chronograms}\label{secLEDTreesAsChronograms}

This work is focused mostly on a special type of LED tree that has a length minimizing property. This type of tree emerged in a practical application that we started to deal with and it soon caught our interest also from the theoretical point of view. The main contribution of the work presented here are the theoretical results that we obtained, however, the story would not be complete without its practical background and an example where our results were actually applied. The former is discussed in this section and the latter will conclude our presentation. 

LED trees naturally emerge in situations when we try to model the evolution of a set of species assumed to have a common ancestor (or a hierarchically ordered set of ancestors). In such cases, the hierarchical structure of the species and their ancestors can be represented by a rooted phylogenetic tree. In some situations, the pure hierarchy might not be enough and we might need to represent the real temporal evolution. This can be done by constructing a tree in which the lengths of the branches represent the duration of the corresponding part of the evolution. This type of phylogenetic tree is called a {\it chronogram}. Now let us assume that all leaves of the chronogram represent species existing at the same time (i.e. no species come to extinction before the end of the evolution) and that the speed of evolution of any species is constant through the whole time interval. If the chronogram is placed in a Euclidean space, then it is a LED tree. 

Phylogenetic trees are used mostly in biology and related fields, but they also appear in other scientific or technical disciplines. Our motivation to explore LED trees actually comes from linguistics. Phylogenetic trees, both basic and chronograms, are often used by linguists to represent the evolution of languages from their (often hypothetical) ancestral languages to the contemporary state. Basic phylogenetic trees are used for hierarchical classification of languages into families -- i.e. the Slovak language is the member of the Slavic family, then the Balto-Slavic family and finally the Indo-European family. A corresponding chronogram would contain also the information about the time that passed since the Proto-Indo-European or Proto-Slavic era. Since the exact dating of some milestones of the evolution of various language families is still not completely clear, many researchers design algorithms for constructing chronograms in order to estimate the unknown time periods. Even though this approach does sometimes get criticized by linguists as too abstract and simplifying, it remains in the field of interest and some widely accepted results are based on it (among others, Bouckaert et al. \cite{BA}, Chang et al. \cite{Chang}, Gray and Atkinson \cite{GA}, Kassian et al. \cite{Kass}).

A general LED chronogram described above is simply a representation of a time evolution that correctly displays the length of any time interval within the process. The positions of leaves do not necessarily carry any important information and the leaves can be placed arbitrarily according to the needs of the visualization. In our work, we consider a more specific setting and we assume that the coordinates of the leaves in the Euclidean space represent some actual features of the species. This means that their Euclidean distance measures how far they diverged from each other during the evolution. This can help us estimate the hanging type of the tree -- we can assume that species situated close to each other differentiated later than more distant species. Also the chronogram itself can be seen as a representative of the actual evolution through the feature space. Having given the species as the final stage of the evolution, we can then explore various scenarios how they could get to their positions.

Let us consider the situation in Fig. \ref{FigEvolutionScenarios}, where we have five species $A$, $B$, $C$, $D$, $E$ and several different evolution scenarios. The first picture represents the setting when there is just one common ancestor for all species and they differentiate immediately at the beginning of the evolution. This is the minimal time scenario, i.e. the shortest evolution that we can have. The second picture represents another situation. Based on the distances of the species, we assumed that the species $B$ and $C$ had a common ancestor $Y$, which in turn had a common ancestor with $A$ denoted by $Z$. The species $D$ and $E$ had a common ancestor $X$. Finally, $X$ and $Z$ are direct descendants of $R$, which is the root ancestor representing the beginning of the evolution. The tree depicted in the picture has no particular properties, it is just a generic tree of the given hanging type. 

Another tree of the same hanging type is the one shown in the third picture. But this time it has a specific feature -- among all trees of the same hanging type, this is the one with the minimum length. Let us think about the type of evolution that it represents. The length $\Lambda$ of the tree can be expressed as
\begin{eqnarray*}
	\Lambda & = & |AZ|+|BY|+|CY|+|YZ|+|ZR|+|DX|+|EX|+|XR| \\
	& = & (|AZ|+|ZR|)+(|DX|+|XR|)+(|BY|+|YZ|)+|CY|+|EX|.
\end{eqnarray*}
Since we are dealing with a LED tree, the sums in the first two pairs of parentheses are equal. Thus we have
\[ \Lambda=2(|AZ|+|ZR|)+(|BY|+|YZ|)+|CY|+|EX|. \]
Here, the term $|AZ|+|ZR|$ represents the overall time of the evolution. The sum $|BY|+|YZ|$ represents the time needed for $Z$ to evolve into $B$ or $C$. Further, $|CY|$ measures the time in which $Y$ evolves into $B$ or $C$ and similarly $|EX|$ represents the time taken by the evolution of $X$ into $D$ or $E$. So, for a given hanging type, minimizing $\Lambda$ can be seen as minimizing the overall time of the evolution with the condition that the evolution after each split is also as short as possible. 

In the fourth picture, we can see a tree with still the same hanging type but another particular property. This time all ancestor species lie on the line segment connecting their two direct descendants. In terms of the species evolution, this can be interpreted as follows: all features that have the same value in the two descendants, must have the exact same value in the ancestor. For the evolutions described above, this need not be the case -- the values of such features can still change after the differentiation, but they will eventually evolve to be equal. 

The fifth picture is the length minimizer for another hanging type, where we assumed that $A$ and $B$ differentiate only after $C$ differentiates from their common ancestor. After the split, $A$ and $B$ diverge quickly, while $B$ stays closer to $C$. This is somewhat less probable, but not an impossible scenario. 

Yet another hanging type is shown in the last picture. Here $C$ is differentiated from the very beginning, while the other species differentiate later. Also, we can see that $A$ has a common ancestor with $D$, although it is actually closer to $B$ and $C$ and we have a similar situation with $B$ and $E$. Moreover, after $A$ and $D$ differentiate, they, in some sense, go backwards to the root -- their distance from the root decreases for some time. Again, this situation cannot be ruled out completely, but we consider it much less likely than any other scenario mentioned before. 

\begin{figure}[h]
	\centering
	\includegraphics[width=0.32\textwidth]{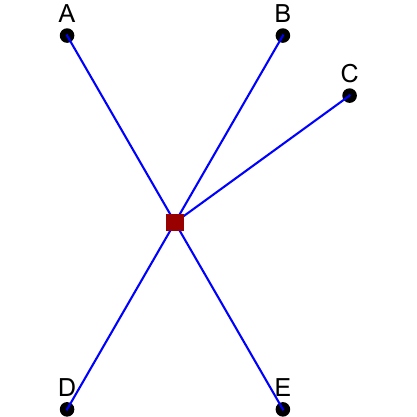}
	\includegraphics[width=0.32\textwidth]{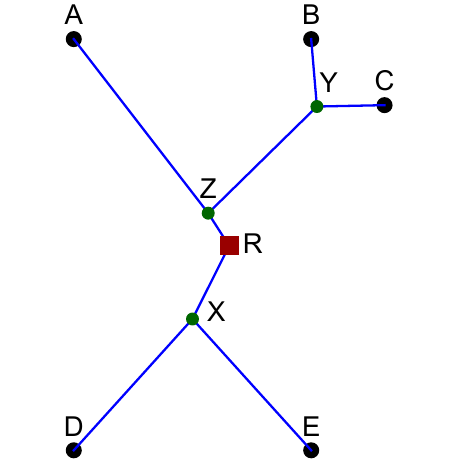}
	\includegraphics[width=0.32\textwidth]{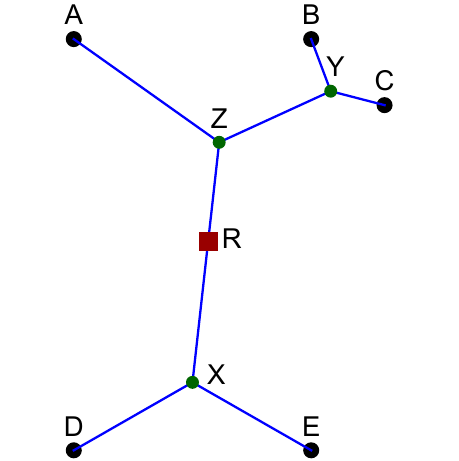} \vspace{0.3cm} \\
	\includegraphics[width=0.32\textwidth]{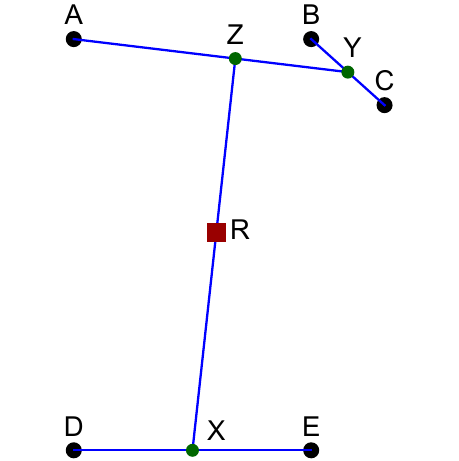}
	\includegraphics[width=0.32\textwidth]{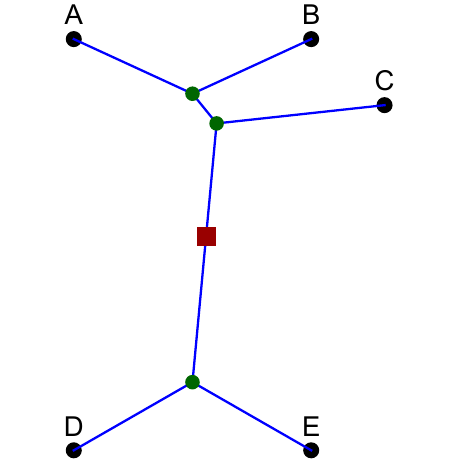}
	\includegraphics[width=0.32\textwidth]{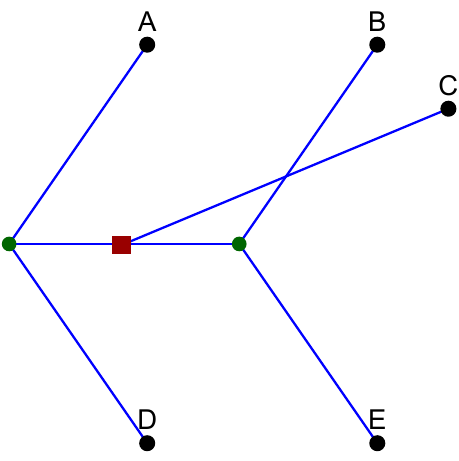}
	\caption{Six different possible scenarios for evolution of the species $A$, $B$, $C$, $D$ and $E$ from their root ancestor (marked by a square).}
	\label{FigEvolutionScenarios}
\end{figure}

% *****************************************************************************

\section{Overview of the results}\label{secOverview}

As we indicated in the previous section, when trying to construct a chronogram for a given set of species (leaves), we can often determine the hanging type (or several most probable hanging types) from the positions of the species in the Euclidean space. Therefore, our task is reduced to finding a reasonable chronogram of a given hanging type. Among these chronograms, there is usually no one and only correct choice, but more often a set of probable/acceptable scenarios. Therefore, rather than picking the best tree, we can offer several possibilities along with an explanation of what they represent. As we have shown, one type of LED tree that carries a clearly defined information is a length-minimizing LED tree, an example of which is provided in the third and the fifth picture of Fig. \ref{FigEvolutionScenarios}. This type of tree will be studied in the rest of the paper.

Finding length-minimizing LED trees is closely related to the Euclidean Steiner tree problem, where we search for a minimum length tree for a given set of leaves, with no further requirements (for an explanation and overview of some results, see e.g. Brazil \cite{Brazil1}). This problem is most often solved in plane, but there are also results for more dimensions (to name just a few: Fampa \cite{Fampa}, Ouzia and Maculan \cite{Ouzia}, Smith \cite{Smith}, Snyder \cite{Snyder}). Just as LED trees, Steiner trees have been used for finding plausible phylogenetic trees, though not chronograms \cite{Brazil,Weng}. It is not difficult to find examples of situations, when the Euclidean Steiner tree and the length-minimizing LED tree are the same, except that the Steiner tree is not rooted. Some of them are shown in Fig. \ref{FigSteiner}. However, in most cases these trees differ. 

Of course, when looking for a Steiner tree, the hanging type is usually not known and that is what makes this problem difficult; with a given hanging type it would be just an unconstrained convex optimization. In our case, the LED property brings in some additional difficulty and even when the hanging type is prescribed, the problem needs some analysis before attempting to solve it by an optimization algorithm. In particular, as we will see later, the corresponding optimization problem has a rather complex non-convex feasible set and this naturally leads to expecting local extrema and saddle points. However, it comes as a surprise that under a certain (and not very limiting) regularity assumption, a stationary point of the objective function is unique. This is what we consider to be the most interesting result of our work. The corresponding propositions are formulated and proved in Sec. \ref{secOptimalityOfStationaryPoints}. Besides that, we explore some topological and geometrical properties of the feasible set in Sec. \ref{secTheFeasibleSet} and we prove several geometrical properties of length-minimizing LED trees that can be observed also in Steiner trees (Sec. \ref{secStationaryPoints} and \ref{secOptimalityOfStationaryPoints}). Finally, we present some simple examples of using the length-minimizing LED trees for reconstruction of the evolution of Indo-European languages. 

\begin{figure}[h]
	\centering
	\includegraphics[width=0.24\textwidth]{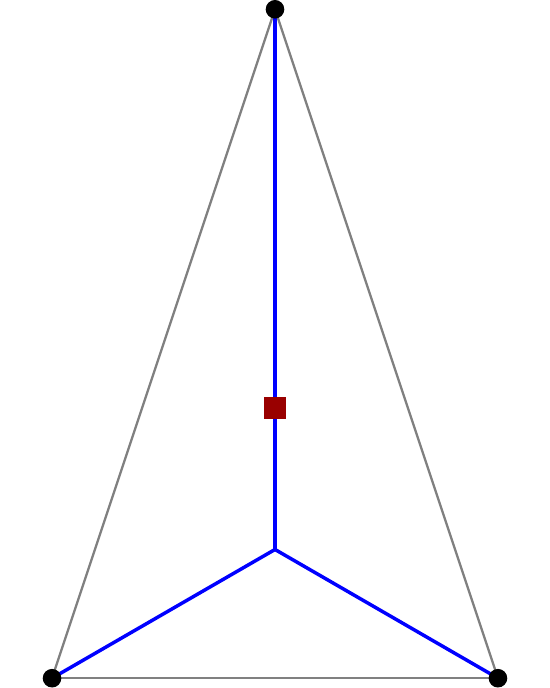}
	\includegraphics[width=0.24\textwidth]{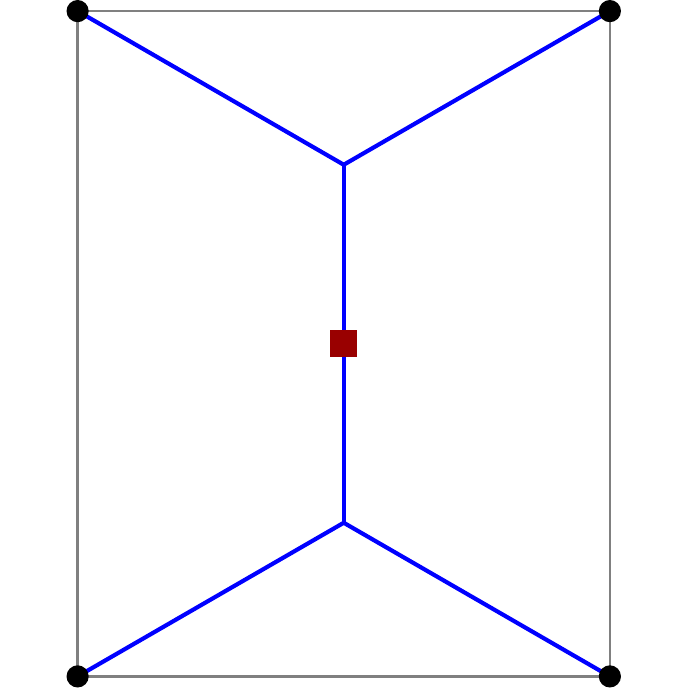}
	\includegraphics[width=0.24\textwidth]{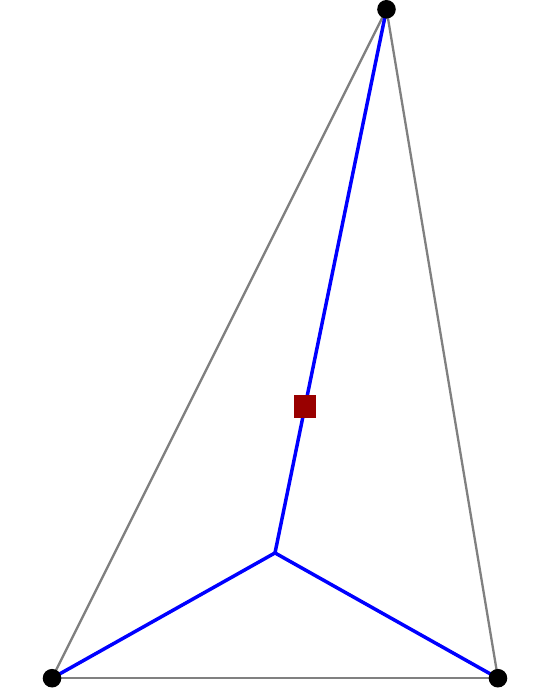} 
	\includegraphics[width=0.24\textwidth]{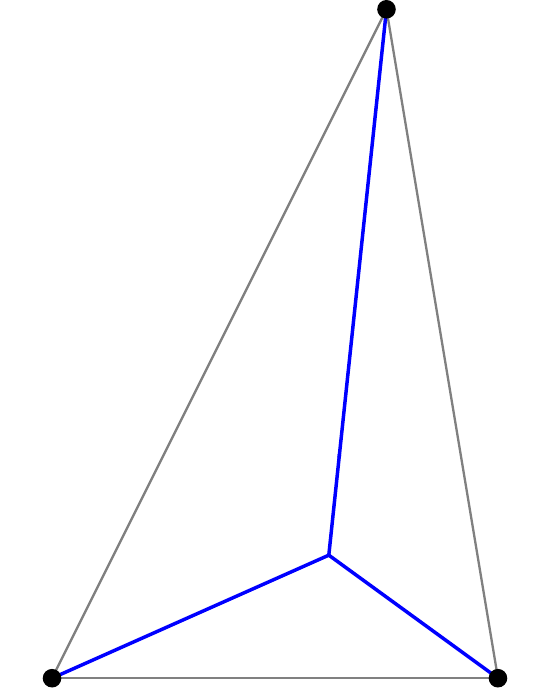} 
	\caption{Lenght-minimizing LED trees and Euclidean Steiner trees. In the first two situations (an isosceles triangle and a rectangle), both trees are the same, except for the fact that the LED tree is rooted. In the third case, the three leaves are vertices of a general triangle and the corresponding Steiner tree  is different from the length-minimizing LED tree.}
	\label{FigSteiner}
\end{figure}

% *****************************************************************************

\section{Full binary LED trees and their basic properties}\label{secFullBinaryLEDTrees}

Before we proceed to a deeper exploration of LED trees and formulation of our results, let us first make one restricting assumption. The definition \ref{DefLEDTree} does not impose any special conditions on the tree $G$, from which the LED tree is constructed. However, in the rest of the paper, we will limit ourselves only to LED trees that are constructed from a {\it full binary tree} -- a tree where each vertex has either two or zero children. Such trees will be called {\it full binary LED trees}. This setting is more suitable for chronograms and it also simplifies our reasoning and formulations. Trees with more than two branches at some of the forks are still kept in the picture, since the map $\psi$ does not have to be injective. That means that several vertices from $V$ can be mapped onto the same point in $\mathbb{R}^n$ and some edges can have their length equal to zero. Thus, even though the underlying structure is binary, the corresponding Euclidean tree can represent a non-binary evolution. What will be really left out are the trees with single child vertices. But this is actually what we aim for; in a phylogenetic tree, such vertices would be completely redundant.

For a full binary LED tree, we are able to exactly specify some important quantities. Let us suppose that we have $n_l$ points in $\mathbb{R}^n$ and we want to construct a full binary LED tree with these points as leaves. Then, no matter the hanging type, the tree will always have
\begin{itemize}
	\item{$n_v$ inner vertices, where $n_v=n_l-1$, }
	\item{$n_t$ total vertices, where $n_t=2n_l-1$,}
	\item{$n_e$ edges, where $n_e=2(n_l-1)$.}
\end{itemize}

Now let us have a closer look at the structure of a full binary LED tree. For our further purposes, it will be useful to realize how exactly this tree is built up of LED trees of lower height. To make the explanation more clear, the idea is also illustrated in Fig. \ref{FigBuildingLEDTree}. Let us consider a given full binary LED tree and its arbitrary LED subtree that has at least three vertices. Let $v$ be the root of the subtree and $v_1$, $v_2$ its two children; let us assume that $v_1\neq v_2$. Let $h(v_1)$ and $h(v_2)$ be the heights of $v_1$ and $v_2$. Then, since $v$ is the root of a LED tree, we must have
\[ \|v-v_1\|+h(v_1)=\|v-v_2\|+h(v_2), \]
which means
\begin{equation}
	\|v-v_1\|-\|v-v_2\|=h(v_2)-h(v_1).
	\label{eqHyperboloid}
\end{equation}
This implies that $v$ lies on the two-sheet rotational hyperboloid with foci $v_1$, $v_2$ and with the semi-major axis equal to $\left|h(v_2)-h(v_1)\right|/2$. More specifically, it lies on the sheet of the hyperboloid determined by the sign of $h(v_2)-h(v_1)$: if this value is positive, then it will be the sheet whose vertex is closer to $v_2$. If it is negative, then it will be the other sheet. If it is zero, then the hyperboloid is actually just a single hyperplane.

In the special case when $v_1=v_2$, the position of $v$ is not restricted by any condition. The vertex $v$ can lie anywhere and once we connect it with $v_1$ and $v_2$, we will always get a LED tree. Of course, the edges $vv_1$ and $vv_2$ will coincide.

Let us also remark that in any case it holds
\begin{equation}
	\|v_1-v_2\|\geq |h(v_2)-h(v_1)|.
	\label{eqHypExistence}
\end{equation}
If this was not true, then there would be no way to find a position for $v$ that would satisfy (\ref{eqHyperboloid}). 
\begin{figure}[h]
	\centering
	\includegraphics[width=0.75\textwidth]{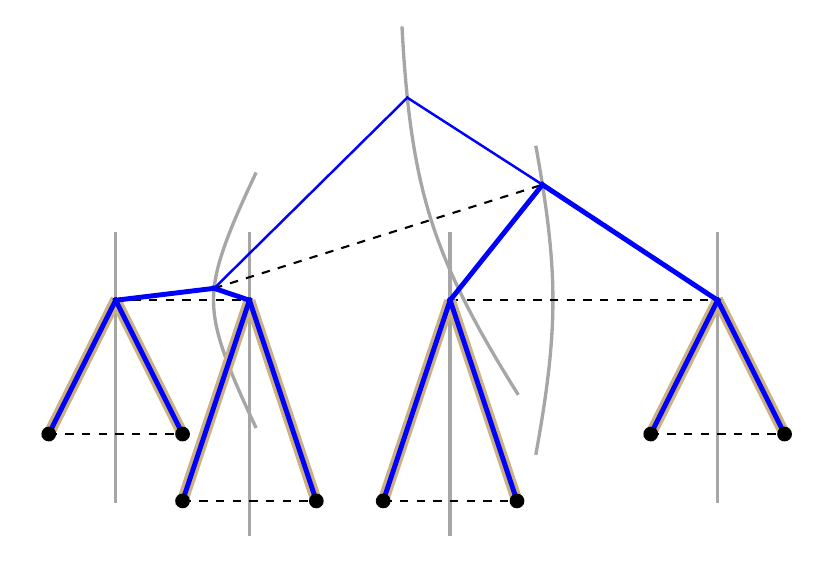}
	\caption{The structure of a LED tree. We can see that this particular tree consists of 15 LED subtrees. One of them is the tree itself, eight of them are the leaves and the remaining six are highlighted by thicker lines. We can also see that each inner vertex lies on the hyperbola whose foci are the children of that vertex and the semi-major axis is given by the difference of heights of the children. For vertices, whose children are leaves, the hyperbola is a straight line.}
	\label{FigBuildingLEDTree}
\end{figure}

Later on, we will use the above mentioned facts when trying to imagine and parametrize the set of all LED trees of a given hanging type. But before we get into anything else, let us introduce some notations that will be used throughout the text. 

\begin{notn}\label{notBasic}
In what follows,
	\begin{itemize}
		\item{$v_i$, $i=1,\dots ,n_t$, denotes the $i$-th vertex of the tree, while the values $i=1,\dots,n_v$ correspond to the inner vertices and the values $i=n_v+1,\dots,n_t$ represent the leaves,}
		\item{$e_j$, $j=1,\dots ,n_e$, stands for the $j$-th edge,}
		\item{$r$ is the index of the root.}
	\end{itemize}
For whichever values of $i$ it makes sense, 
	\begin{itemize}
		\item{$v_i^L$ is the child of $v_i$ with the lower index, $v_i^R$ is the child of $v_i$ with the higher index and $v_i^U$ denotes the parent of $v_i$,}
		\item{$u_i^L$, $u_i^R$ and $u_i^U$ are the unit vectors in directions of $v_i-v_i^L$, $v_i-v_i^R$ and $v_i-v_i^U$, whenever these differences are non-zero,}
		\item{$i_L$, $i_R$ and $i_U$ are the indices of the edges connecting $v_i$ with $v_i^L$, $v_i^R$ and $v_i^U$, respectively.}
	\end{itemize}
\end{notn}

% *****************************************************************************

\section{Length-minimizing LED trees}\label{secLengthMinimizingLEDTrees}

At this point, we can start discussing the main object of interest of our work. Let $\pazocal{L}(\pazocal{H})$ represent the set of all LED trees of hanging type $\pazocal{H}$. We are looking for a tree that solves the minimization problem
\begin{equation}
	\min_{\Psi\in\pazocal{L}(\pazocal{H})} \Lambda(\Psi),
	\label{eqMinProblemBasic}
\end{equation}
where $\Lambda\colon\pazocal{L}(\pazocal{H})\rightarrow\mathbb{R}$ and $\Lambda(\Psi)$ is the length of the tree $\Psi$, i.e.
\[ \Lambda(\Psi)=\sum\limits_{j=1}^{n_e} \|e_j\|. \]

In order to be able to apply some standard optimization technique, it is convenient to rewrite (\ref{eqMinProblemBasic}) as a constrained minimization problem in a Euclidean space. This is possible, since any LED tree can be represented by a point in $\mathbb{R}^{n_v n}$, whose $i$-th $n$-tuple contains the coordinates of $v_i$. In the rest of the text, this representation will be often mentioned alongside the original LED tree. Therefore, it will be useful to keep in mind the following notation.
\begin{notn}\label{notBeta}
	If $\beta$ is a point in $\mathbb{R}^{n_v n}$ representing a LED tree, then this tree will be refered to as $\Psi_{\beta}$. 
\end{notn}

Now let us define functions $\lambda_j\colon\mathbb{R}^{n_v n}\rightarrow\mathbb{R}$, $j=1,\dots, n_e$, where $\lambda_j(\beta)$ is the length of the $j$-th edge of $\Psi_{\beta}$. Further, let $\pazocal{J}_i$, $i=n_v+1,\dots, n_t$, be the index set consisting of the indices of all edges that are contained in the root path of the leaf $v_{i}$. Then we are looking for $\beta\in\mathbb{R}^{n_v n}$ solving the optimization problem
\begin{equation}
	\begin{array}{ll}
		\displaystyle \min_{\beta\in\mathbb{R}^{n_v n}} & \lambda(\beta) \vspace{0.1cm}\\
		\mathrm{subject\,\, to}& \displaystyle \sum\limits_{\iota\in\pazocal{J}_{i}} \lambda_{\iota}(\beta)-\sum\limits_{\iota\in\pazocal{J}_{i_0}} \lambda_{\iota}(\beta)=0, \quad i=n_v+1,\dots,n_t,\,i\neq i_0, 
	\end{array}
	\label{eqConstraintsLeaves}
\end{equation}
where $i_0$ is any fixed leaf index and $\lambda\colon\mathbb{R}^{n_v n}\rightarrow\mathbb{R}^n$ is defined as
\[ \lambda(\beta) =\sum\limits_{j=1}^{n_e} \lambda_j(\beta). \]
The $n_l-1$ constraints (\ref{eqConstraintsLeaves}) represent the property of equal depth of all leaves.

The constraints can be written in a more economic way, if we focus on inner vertices instead of leaves. Note that any two leaf-root paths meet at some inner vertex and from then on they pass through the same vertices. Therefore, some of the edge lengths in the equalities (\ref{eqConstraintsLeaves}) appear in one equality with both positive and negative sign and thus we have some redundancy in our expressions. A more to-the-point way of saying the same thing would be to assemble equalities containing edge lengths just up to the meeting point of two leaf-root paths. At any inner vertex, we can have a lot of leaf paths, but it is always possible to select two and only two without any common edges. If we do this for every inner node and write the corresponding equalities, we will get $n_v=n_l-1$ conditions equivalent to the constraints (\ref{eqConstraintsLeaves}).

Now, the question is, how to choose the two aforementioned leaf paths for a given inner vertex. The answer is that they can be chosen arbitrarily, but for the sake of simplicity and clarity, we will choose them in the same defined way for each of the inner vertices. Let us have an arbitrary inner vertex $v_i$. The first path will go ``to the left'' -- the first edge on it will be $e_{i_L}$ and in any inner vertex $v_k$ that will follow, we will always proceed through the edge $e_{k_L}$. We will call this path the {\it left leaf path} of the vertex $v_i$. The second path will start ``to the right'' -- with the edge $e_{i_R}$ -- but from then on, we will be turning left and every inner vertex $v_k$ on the path will be followed by the edge $e_{k_L}$. This path will be called the {\it right leaf path} of $v_i$. For a complete clarity, the idea is illustrated in Fig. \ref{FigLeafPaths}.

\begin{figure}[h]
	\centering
	\includegraphics[width=0.32\textwidth]{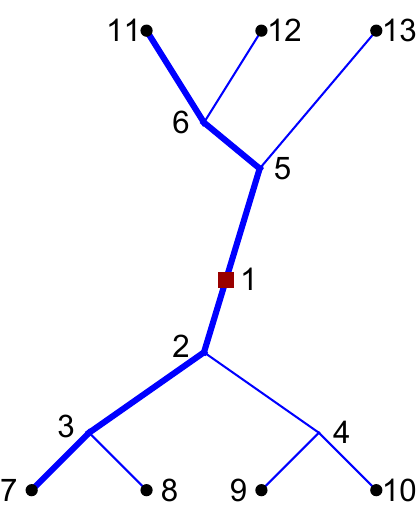}
	\includegraphics[width=0.32\textwidth]{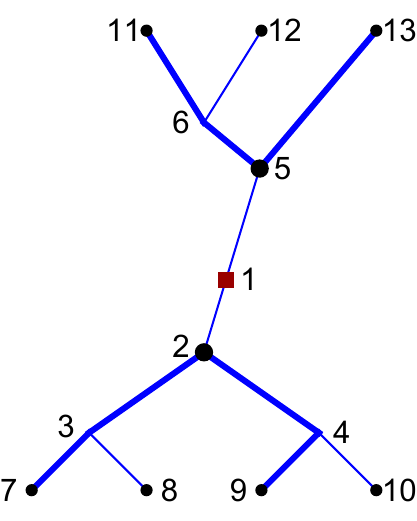}
	\includegraphics[width=0.32\textwidth]{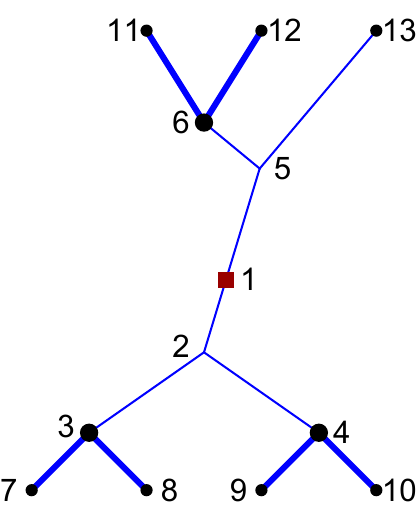} 
	\caption{Left and right leaf paths for all inner vertices. These paths are used to constitute the constraints of the minimization problem (\ref{eqOriginalMinProblem}).}
	\label{FigLeafPaths}
\end{figure}

To assemble the corresponding formulation of the minimization problem, let $v_i$ be an arbitrary inner vertex. Let $\pazocal{I}_i^L$ and $\pazocal{I}_i^R$ be the sets of indices of all edges contained in the left and right leaf path of $v_i$, respectively. Then our problem reads
\begin{equation}
	\begin{array}{ll}
		\displaystyle \min_{\beta\in\mathbb{R}^{n_v n}} &\lambda(\beta), \vspace{0.1cm}\\
		\mathrm{subject\,\, to} & \displaystyle\sum\limits_{\iota\in\pazocal{I}_i^L} \lambda_{\iota}(\beta)-\sum\limits_{\iota\in\pazocal{I}_i^R} \lambda_{\iota}(\beta)=0, \quad i=1,\dots,n_v. \vspace{0.1cm}
	\end{array}
	\label{eqOriginalMinProblem}
\end{equation}

% *****************************************************************************

\section{The feasible set of the minimization problem}\label{secTheFeasibleSet}

In this section, we will explore the topology and geometry of the feasible set of the optimization problem (\ref{eqOriginalMinProblem}). Let us denote this set by $\Omega(\pazocal{H})$. 

The first thing to know is that, in fact, this set can be empty. An example is shown in Fig. \ref{FigEmptyFeasibleSet}. Here, we have three leaves $A$, $B$, $C$ lying on one line and a representative of the hanging type is displayed in the picture.

If we denote the parent of $A$ and $B$ by $X$, then, in order to be in the feasible set, we have to have $|XA|=|XB|$, which means $X=(0,t)$. According to (\ref{eqHypExistence}), finding a feasible position for the root $R$ is possible only when $|XA|\leq|XC|$. But this can never happen, which means that there is no LED tree of the given hanging type.

\begin{figure}[h]
	\centering
	\includegraphics[width=0.4\textwidth]{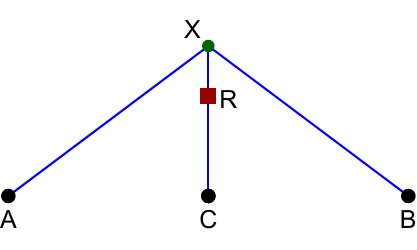}
	\caption{An example of a hanging type that does not contain any LED tree. The picture depicts a representative of the hanging type and we can see that $|XA|>|XC|$ for any feasible position of $X$. In the feasible set, we must have the opposite of that.}
	\label{FigEmptyFeasibleSet}
\end{figure}

If the set $\Omega(\pazocal{H})$ is not empty, then the minimum it can be is a half-line. This is not difficult to see: anytime we find a feasible position for the root, it comes with a whole hyperboloid sheet of other feasible positions. So even if we found just one feasible position for each non-root inner vertex, there would still be the hyperboloid sheet containing all possible roots. In the extreme situation when the distance of the foci is non-zero and equal to the difference of their heights, the corresponding hyperboloid sheet is a half line starting at the focus of the greater height. The direction of the half-line is given by the difference of this focus and the other focus (the one of the lower height). 

An example of such a situation in two dimensions is shown in Fig. \ref{FigFeasibleHalfLine}. In this case we have four leaves $A=(-a,0)$, $B=(a,0)$, $C=(0,a)$, $D=(0,-a)$. Again, we have $X=(0,t)$ and $|XA|\leq|XC|$, which implies $t\leq 0$. This, in turn, implies that $Y$ must be situated on the $x$-axis or below. But then we have $|YC|\geq |YD|$, while the feasibility condition requires $|YC|\leq |YD|$. This leaves us with only one possible placement of $X$ and $Y$, which is $X=Y=(0,0)$. The root $R$ can then be anywhere on the half-line $XC$ and hence the feasible set is a half-line in $\mathbb{R}^6$.

\begin{figure}[h]
	\centering
	\includegraphics[width=0.32\textwidth]{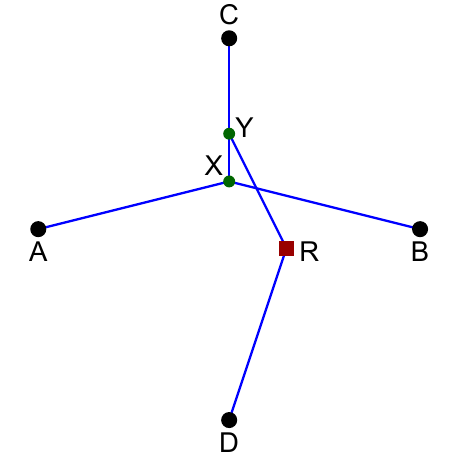}
	\includegraphics[width=0.32\textwidth]{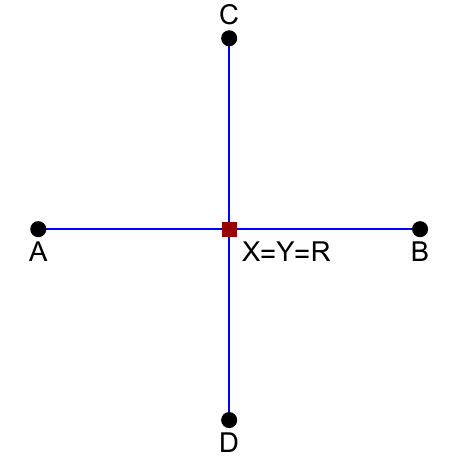}
	\includegraphics[width=0.32\textwidth]{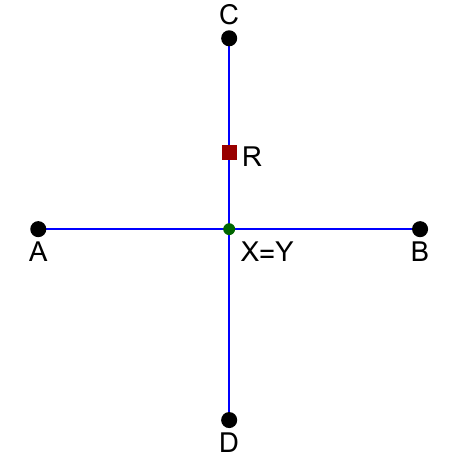} 
	\caption{An example of situation, when the feasible set is just one half-line. A representative of the hanging type is shown in the first picture. With this setting, there is only one possibility for placing $X$ and $Y$ -- the one we can see in the second and the third picture. The root $R$ can be anywhere on the half-line $XC$.}
	\label{FigFeasibleHalfLine}
\end{figure}

Before analyzing the geometry of the feasible set more in detail, let us devote some space to its connectedness. As we explained above, it cannot contain any isolated point -- each admissible root position brings with it a whole hyperboloid sheet of other possible roots. The character of the constraints and their finite number implies, that any component of connectedness will be at least path connected. Within this range, $\Omega(\pazocal{H})$ can have several types of topology -- it can be simply connected, path connected but not simply connected (i.e. it can have holes) or it can consist of two and more components of connectedness. Some examples are shown in Fig. \ref{FigConnectedness}. In all these examples, we have four leaves $A$, $B$, $C$, $D$, while $A$ and $B$ have a common parent $X$ and $C$ and $D$ have a common parent $Y$. The parent of $X$ and $Y$ is the root. Since the feasibility conditions imply $|XA|=|XB|$ and $|YC|=|YD|$, the point $X$ lies on the axis of the line segment $AB$ and the point $Y$ lies on the axis of the line segment $CD$. This means that the positions of $X$ and $Y$ can be parametrized by two parameters $s\in \mathbb{R}$, $t\in\mathbb{R}$.

The first example shows a setting where the feasible set is disconnected and consists of two components of connectedness. We have $A=(-a,0)$, $B=(a,0)$, $C=(-c,0)$, $D=(c,0)$, $a<0$, $c>0$, $c<a$. From the feasibility conditions we have $X=(0,s)$ and $Y=(0,t)$. To construct the feasible set, we have to find all pairs $(s,t)$ for which we are able to find an admissible root position.
In order for that to be possible, we must have
\[ |YX|\geq \big\vert |XA|-|YC|\big\vert, \]
which means
\begin{equation}
	|t-s|\geq \left\lvert\, \root\of{a^2+s^2}-\root\of{c^2+t^2}\right\rvert. 
	\label{eqDisconnectedness0}
\end{equation}
Let us explore the boundary of the set given by this inequality, i.e. the set where the corresponding equality holds. The terms on both sides are non-negative, therefore we get an equivalent inequality by squaring. It reads
\begin{equation}
	2\,\root\of{ (a^2+s^2)(c^2+t^2)}\geq a^2+c^2+2st. 
	\label{eqDisconnectedness1}
\end{equation}
Now we can square again, but later we will have to verify the non-negativity of the term on the right hand side. After squaring and some algebra, we get
\begin{equation}
	4c^2s^2-4(a^2+c^2)st+4a^2t^2-(a^2-c^2)^2=0. 
	\label{eqDisconnectedness2}
\end{equation}
This equality represents a quadratic curve and its discriminant is equal to $16\left(a^2-c^2\right)$. For $c<a$, this is a positive value and thus the curve is a hyperbola centered at the origin. The equality also implies
\[ 2st=\frac{4c^2s^2+4a^2t^2-\left(a^2-c^2\right)^2}{2\left(a^2+c^2\right)}. \]
Substituting this into the right hand side of (\ref{eqDisconnectedness1}), we get
\[ a^2+c^2+2st=\frac{a^4+6a^2c^2+c^4+4c^2s^2+4a^2t^2}{2\left(a^2+c^2\right)}>0. \]
Thus the hyperbola (\ref{eqDisconnectedness2}) satisfies the equality in (\ref{eqDisconnectedness1}) and it is the boundary of the set of admissible pairs $(s,t)$. Since for $(s,t)=(0,0)$, the inequality (\ref{eqDisconnectedness0}) is not satisfied, the admissible set is formed by the two parts of the plane, which are bounded by the hyperbola and contain its foci. Thus the set $\Omega(\pazocal{H})$ has to be disconnected and has two connected components.

The next example shows a situation when the feasible $(s,t)$-region is the whole plane. Here we have $A=(-a-d,0)$, $B=(-a+d,0)$, $C=(a-d,0)$, $D=(a+d,0)$, $a>0$, $d>0$ and $d<a$. Further, we have $X=(-a,s)$, $Y=(a,t)$. The condition that guarantees the existence of a corresponding LED tree reads
\[ \root\of{4a^2+(t-s)^2}\geq \left\lvert\, \root\of{d^2+s^2}-\root\of{d^2+t^2}\right\rvert. \]
By steps analogous to the previous case, we find that this condition is satisfied for all $(s,t)\in\mathbb{R}^2$. 

In the third example, the feasible $(s,t)$-region is simply connected, but it is not the whole plane. The coordinates of leaves are $A=(-1,0)$, $B=(1,0)$, $C=(-0.5,-0.5)$, $D=(0.5,-0.83)$.

Finally, we present an example when the feasible set has a hole. In this setting, we have $A=(-a,0)$, $B=(a,0)$, $C=(0,-c)$, $D=(0,c)$, $a>0$, $c>0$, $c\neq a$ and $X=(0,s)$, $Y=(t,0)$. The feasibility condition implies
\begin{equation}
	\root\of{t^2+s^2}\geq \left\lvert\, \root\of{a^2+s^2}-\root\of{c^2+t^2}\right\rvert, 
	\label{eqDisconnectedness4}
\end{equation}
which, by squaring, leads to solving the equation
\[ 2\,\root\of{(a^2+s^2)(c^2+t^2)}=a^2+c^2. \]
This, in turn, leads to
\[ s^2t^2+c^2s^2+a^2t^2-\frac{\left(a^2-c^2\right)^2}{4}=0. \]
The curve given by this equation is a quartic oval centered at the origin and it is displayed in the last picture of Fig. \ref{FigConnectedness}. Since the point $(0,0)$ does not satisfy the inequality (\ref{eqDisconnectedness4}), the feasible region for $(s,t)$ is the outside of the oval plus the oval itself. 

\begin{figure}[h]
	\centering
	\includegraphics[width=0.24\textwidth]{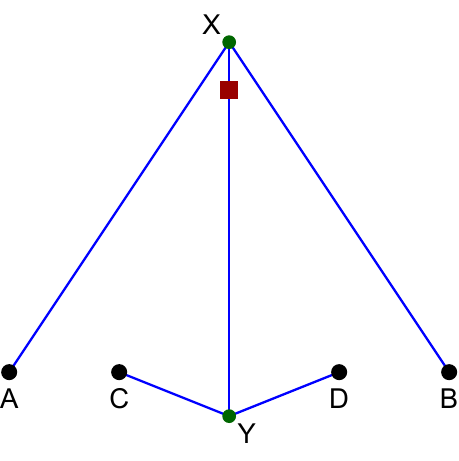}
	\includegraphics[width=0.24\textwidth]{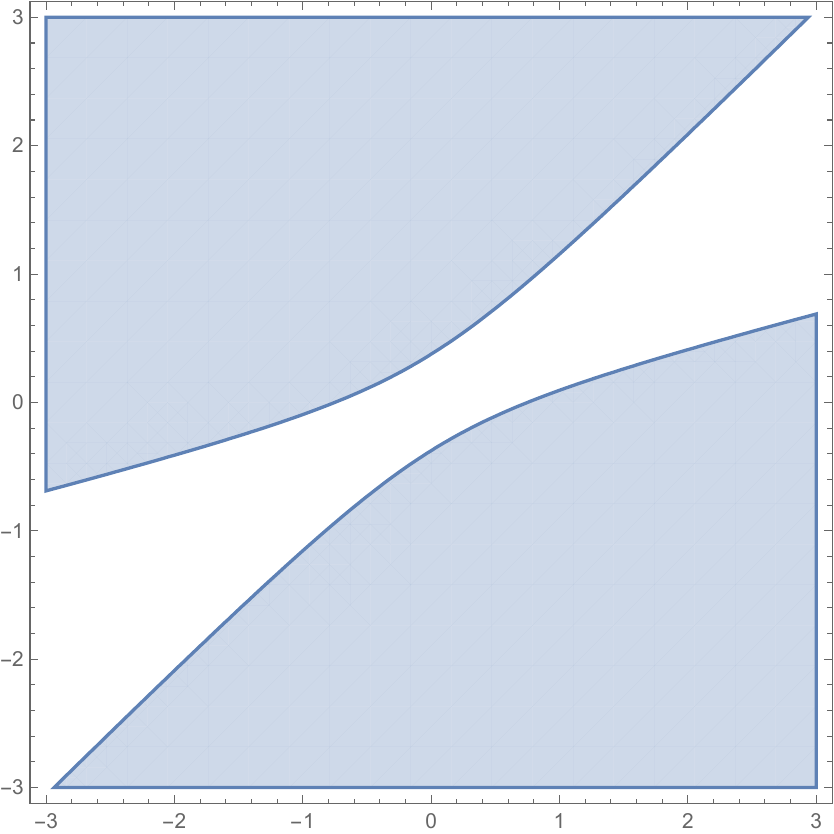}
	\includegraphics[width=0.24\textwidth]{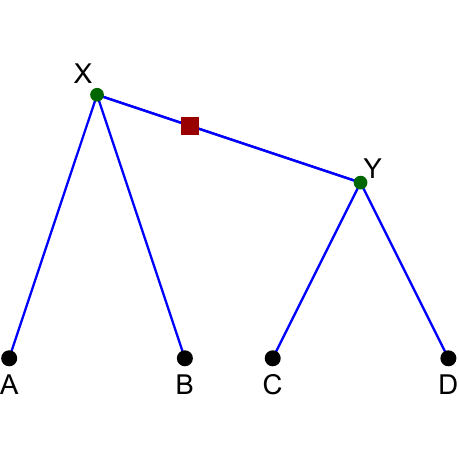} 
	\includegraphics[width=0.24\textwidth]{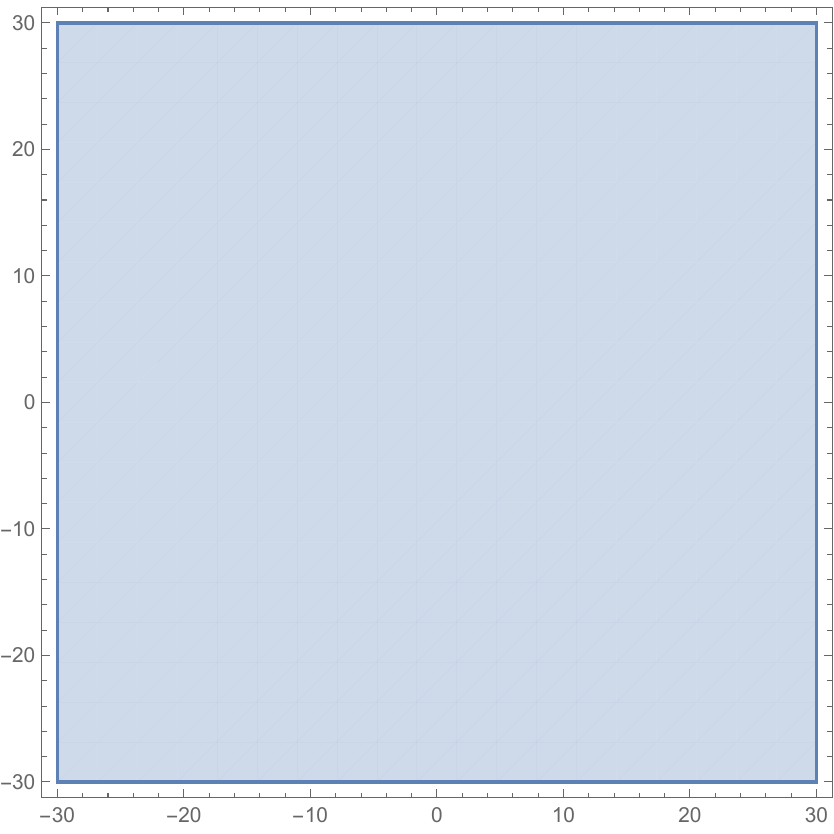} \vspace{0.1cm} \\
	\includegraphics[width=0.24\textwidth]{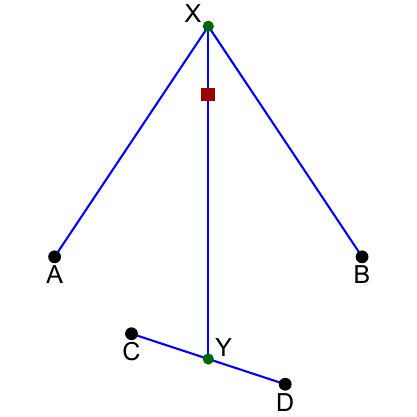}
	\includegraphics[width=0.24\textwidth]{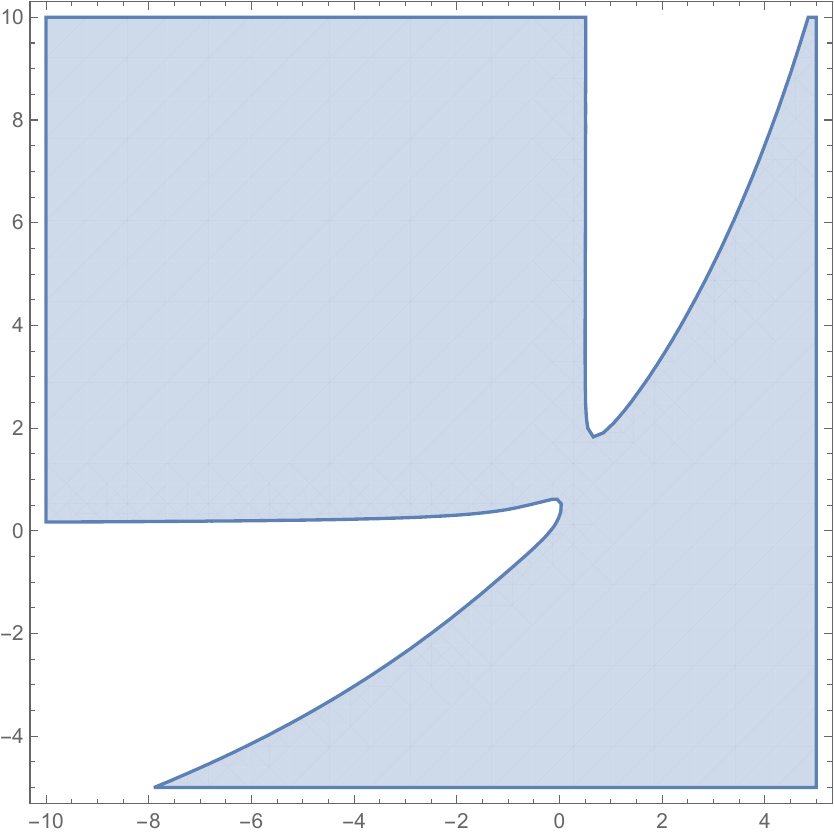}
	\includegraphics[width=0.24\textwidth]{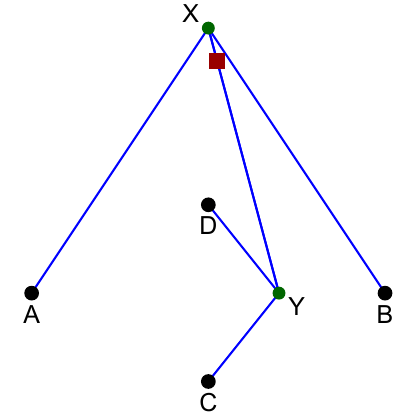}
	\includegraphics[width=0.24\textwidth]{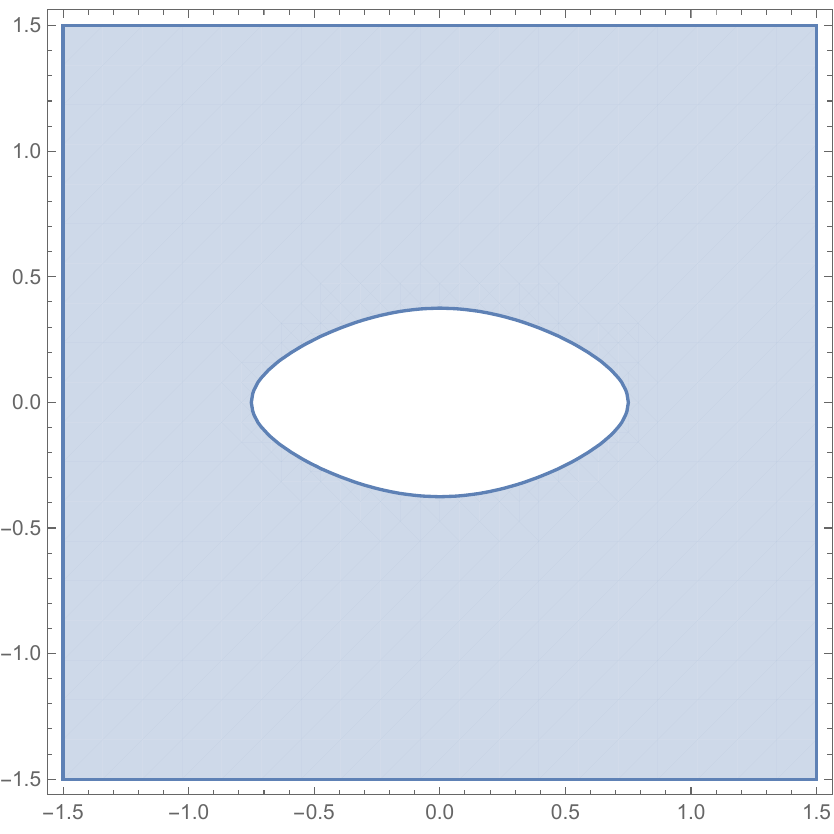}
	\caption{A demonstration of various types of connectedness of the feasible set. On the left of each picture, we have the leaves with one possible LED tree. On the right, we can see the feasible region for the parameter pair $(s,t)$ that parametrizes the lines, to which the inner vertices $X$ and $Y$ are constrained.}
	\label{FigConnectedness}
\end{figure}

The trees shown in Fig. \ref{FigConnectedness} are very simple, yet we get various types of connectedness of the feasible set. Finding sufficient conditions for connectedness is not a trivial task and currently it is a work in progress that will be the subject of a future publication. 

Now we can move forward and explore some other properties of the set $\Omega(\pazocal{H})$. To this aim, we introduce a regularity criterion for a point of this set.

\begin{dfn}\label{defRegularity}
	A point $\beta\in\Omega(\pazocal{H})$ is called {\em regular}, if $\Psi_{\beta}$ has the following properties: $\|e_j\|>0$, $j=1,\dots,n_e$, and $u_i^L\neq u_i^R$, $i=1,\dots,n_v$. A point that is not regular will be called a {\em singular} point. 
\end{dfn}
In simple words, we can say that in the LED tree corresponding to a regular point, no two adjacent vertices coincide (no edge has a length of zero) and the left and right leaf paths of any inner vertex leave the vertex in different directions. Some illustrations are shown in Fig. \ref{FigRegularSingular}.

\begin{figure}[h]
	\centering
	\includegraphics[width=0.3\textwidth]{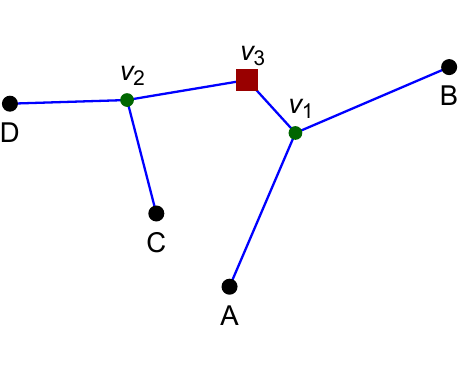} \hspace{0.1cm}
	\includegraphics[width=0.3\textwidth]{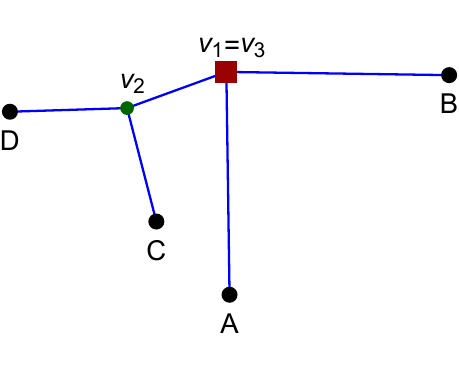} \hspace{0.1cm}
	\includegraphics[width=0.3\textwidth]{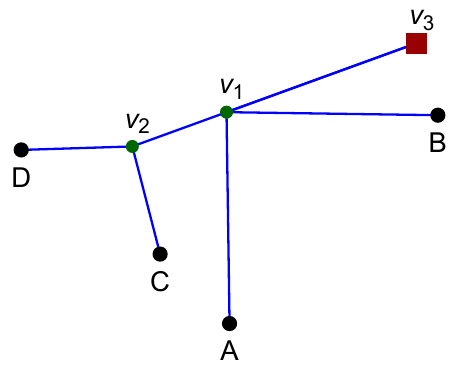} 
	\caption{Regular and singular points of the feasible set. In the first picture, we can see a LED tree corresponding to a regular point. In the second picture, the tree corresponds to a singular point. We can see that the inner vertex $v_1$ coincides with the root $v_3$. In the last picture, there are no coinciding vertices, but we have $u_3^L=u_3^R$.}
	\label{FigRegularSingular}
\end{figure}

It is useful to realize how the feasible set looks in a neighborhood of a regular point.  In the following part of the text, we are going to formulate and prove a proposition that specifies it.

\begin{conv}
	Let us have a set $S\subset\mathbb{R}^n$ and a point $s\in S$. Let $V$ be a neighborhood of $s$ in $\mathbb{R}^n$. Then the set $V\cap S$ will be called a {\em neighborhood} of $s$ {\em in $S$}.
\end{conv}

\begin{prop}\label{propParametrization}
	Let $\beta\in\Omega(\pazocal{H})$ be a regular point. Then there exists an open neighborhood of $\beta$ in $\Omega(\pazocal{H})$ that is a regular surface of dimension $n_v(n-1)$. 
\end{prop}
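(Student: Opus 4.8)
The plan is to exhibit $\Omega(\pazocal{H})$ locally as the zero set of a submersion and then quote the regular value theorem. Collect the $n_v$ equality constraints of (\ref{eqOriginalMinProblem}) into a single map $g=(g_1,\dots,g_{n_v})\colon\mathbb{R}^{n_v n}\to\mathbb{R}^{n_v}$, where $g_i(\beta)=\sum_{\iota\in\pazocal{I}_i^L}\lambda_\iota(\beta)-\sum_{\iota\in\pazocal{I}_i^R}\lambda_\iota(\beta)$, so that $\Omega(\pazocal{H})=g^{-1}(0)$. Each $\lambda_j$ is smooth on the open set where the $j$-th edge has positive length; since $\beta$ is regular, all $n_e$ edges have positive length at $\beta$, and by continuity this persists on an open ball $U\ni\beta$, so $g\in C^\infty(U)$. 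It therefore suffices to prove $\operatorname{rank}Dg(\beta)=n_v$: after replacing $U$ by the open subset on which $Dg$ still has rank $n_v$, the restriction $g\restriction_{U}$ is a submersion, hence $U\cap\Omega(\pazocal{H})=(g\restriction_{U})^{-1}(0)$ is an open neighborhood of $\beta$ in $\Omega(\pazocal{H})$ and a regular surface of dimension $n_vn-n_v=n_v(n-1)$.

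To compute $Dg(\beta)$ I would group the $n_v n$ coordinates of $\beta$ into the $n_v$ blocks carrying the coordinates of the individual inner vertices and read $Dg(\beta)$ off block by block. Differentiating $g_i$ in the coordinates of $v_i$ and using $\nabla_{v_i}\|v_i-v_i^L\|=u_i^L$ (and likewise for $v_i^R$), one sees that only the first edges $e_{i_L}$, $e_{i_R}$ of the two leaf paths contribute, giving the diagonal block $\nabla_{v_i}g_i=u_i^L-u_i^R$, which is nonzero precisely because $\beta$ is regular. For the remaining blocks, observe that every edge occurring in $g_i$ lies on the left or right leaf path of $v_i$, hence joins $v_i$ or a descendant of $v_i$ to a descendant of $v_i$; therefore $g_i$ depends only on the coordinates of $v_i$ and of its inner proper descendants, so $\nabla_{v_j}g_i\neq 0$ forces $\mathrm{gdepth}(v_j)\ge\mathrm{gdepth}(v_i)$ with equality only for $j=i$, where $\mathrm{gdepth}$ denotes the combinatorial depth (the number of edges on the root path).

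This gives $Dg(\beta)$ a block-triangular shape: relabel the inner vertices by non-increasing combinatorial depth, so that the shallowest inner vertex, namely the root, comes last; then the block of $Dg(\beta)$ in the row of $g_i$ and the column of $v_{i'}$ vanishes whenever $i'>i$, while all diagonal blocks $u_i^L-u_i^R$ are nonzero. A downward induction on the block columns, starting from the column of the root (in which only the last row is nonzero), shows that $c^{\top}Dg(\beta)=0$ forces $c=0$; hence the gradients $\nabla g_1,\dots,\nabla g_{n_v}$ are linearly independent and $\operatorname{rank}Dg(\beta)=n_v$, which together with the first paragraph proves the proposition.

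The step I expect to be the real work is this rank computation — in particular, pinning down exactly which inner-vertex blocks a given constraint gradient can touch, so that the triangular (hence full-rank) structure is unambiguous, and checking that no accidental cancellation among the vectors $u_i^L-u_i^R$ can occur. The latter is exactly what the regularity requirements of Definition \ref{defRegularity}, namely $u_i^L\neq u_i^R$ together with positivity of all edge lengths, are designed to rule out.
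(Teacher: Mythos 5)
Your proof is correct, but it takes a genuinely different route from the paper's. You exhibit $\Omega(\pazocal{H})$ locally as $g^{-1}(0)$ for the constraint map $g$ and verify $\operatorname{rank}Dg(\beta)=n_v$ by observing that $g_i$ depends only on the coordinates of $v_i$ and of its inner proper descendants, which makes the Jacobian block-triangular (in the ordering by non-increasing combinatorial depth) with nonzero diagonal blocks $u_i^L-u_i^R$; the regular value theorem then gives the local manifold structure of dimension $n_vn-n_v=n_v(n-1)$. The rank computation is sound: in the column block of the root only the row of $g_r$ is nonzero, and the downward induction you describe propagates $c_i=0$ to all rows, with regularity ($u_i^L\neq u_i^R$ and positive edge lengths) being exactly what keeps the diagonal blocks nonzero and $g$ smooth near $\beta$. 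The paper argues instead by induction on the number of leaves, constructing an explicit regular parametrization $\sigma$ recursively: parametrize the two subtrees hanging from the children of the root and, for each choice of their parameters, attach the nondegenerate hyperboloid sheet on which the root must lie (nondegeneracy coming from strictness of the inequality (\ref{eqHypExistence}) at a regular point, which persists nearby by continuity). Your argument is shorter, more standard, and hands you the tangent space as $\ker Dg(\beta)$ for free; the paper's heavier construction pays off later, since the explicit $\sigma$ is what underlies the description of $T_{\beta}\,\Omega(\pazocal{H})$ in Sec. \ref{secMovingInsideOmega} (only vertices on the root path of a moved vertex must react) and makes the cusp and ridge structure at singular points transparent as degeneration of the hyperboloid sheets. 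Both approaches establish the proposition.
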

\begin{proof}
	In order to prove our claim, we will show that some neighborhood $\omega$ of $\beta$ in $\Omega(\pazocal{H})$ can be represented by a regular map $\sigma\colon U\rightarrow\mathbb{R}^{n_v n}$, where $U\in\mathbb{R}^{n_v(n-1)}$ is an open set. By a regular map we mean a map that is smooth, injective, has a continuous inversion and also linearly independent first derivatives.
	
	The proof will be made by induction on the number of leaves of $\Psi_{\beta}$. If $\Psi_{\beta}$ has only two leaves, the feasible set is a hyperplane and our statement is obviously true. Now let us suppose that it is true for any number of leaves up to $n_l-1$ and let $\beta$ be such that $\Psi_{\beta}$ has $n_l$ leaves. Let $v_1$ and $v_2$ be the children of the root of $\Psi_{\beta}$. As we have explained earlier, these vertices are the roots of two LED subtrees $\Psi_1$, $\Psi_2$ of $\Psi_{\beta}$. Let us first assume that both $\Psi_1$ and $\Psi_2$ have more than one leaf. Let $\Omega_1(\pazocal{H}_1)$ and $\Omega_2(\pazocal{H}_2)$ be the corresponding feasible sets and $\beta_1\in\Omega_1(\pazocal{H}_1)$ and $\beta_2\in\Omega_2(\pazocal{H}_2)$ the points representing $\Psi_1$ and $\Psi_2$. Since $\beta$ is regular, $\beta_1$ and $\beta_2$ must be regular as well. Let $n_1$, $n_2$ be the numbers of leaves of $\Psi_1$ and $\Psi_2$. The induction assumption says that there are neighborhoods $\omega_1$ of $\beta_1$ in $\Omega_1(\pazocal{H}_1)$ and $\omega_2$ of $\beta_2$ in $\Omega_2(\pazocal{H}_2)$ that can be parametrized by regular maps $\sigma_1\colon V_1\rightarrow\mathbb{R}^{n_1 n}$ and $\sigma_2\colon V_2\rightarrow\mathbb{R}^{n_2 n}$, where $V_1\in\mathbb{R}^{n_1(n-1)}$ and $V_2\in\mathbb{R}^{n_2(n-1)}$ are open sets. Let $t_0\in V_1$ and $s_0\in V_2$ be such that $\beta_1=\sigma_1(t_0)$ and $\beta_2=\sigma_2(s_0)$. Further, for any $t\in V_1$ and $s\in V_2$, let $h_1(t)$ and $h_2(s)$ be the heights of the trees corresponding to $\sigma_1(t)$ and $\sigma_2(s)$.
	
	Since $\beta$ is regular, the inequality (\ref{eqHypExistence}) must be sharp for $v_1$ and $v_2$. If it were not, we would either have $v_1=v_2$ or the root $v_r$ of $\Psi_{\beta}$ would lie on a hyperboloid collapsed to a half-line. Both cases lead to breaking the regularity conditions. 

Let us notice that both sides of the inequality (\ref{eqHypExistence}) are continuous with respect to the positions of $v_1$ and $v_2$. This means that there must be open neighborhoods $U_1\subset V_1$ of $t_0$ and $U_2\subset V_2$ of $s_0$ such that the inequality
\begin{equation}
	\|\sigma_1(t)-\sigma_2(s)\|>\left\lvert h_1(t)-h_2(s)\right\rvert 
	\label{eqSigma12Ineq}
\end{equation}
is satisfied for all $t\in U_1$ and $s\in U_2$. Thus for any $t\in U_1$ and $s\in U_2$, we can construct a non-degenerated  (not collapsed to a half-line) hyperboloid sheet $H(t,s)$ with the signed semi-major axis $a(t,s)$, semi-minor axis $b(t,s)$, center $c(t,s)$ and orientation $w(t,s)$ given as
\begin{subequations}\label{eqMapsabcew}
	\begin{align}
		a(t,s) & =  \frac{1}{2}\left(h_1(t)-h_2(s)\right) \nonumber \\
		\varepsilon(t,s) & =  \frac{1}{2}\|\sigma_1(t)-\sigma_2(s)\| \label{eqci} \\
		b(t,s) & =  \root\of{\varepsilon(s,t)^2-a(t,s)^2}, \label{eqbi} \\
		c(t,s) & =  \frac{1}{2}\left(\sigma_1(t)+\sigma_2(s)\right) \nonumber \\
		w(t,s) & =  \frac{\sigma_1(t)-\sigma_2(s)}{\|\sigma_1(t)-\sigma_2(s)\|} \label{eqwi}
	\end{align}
\end{subequations}
Now, let $U=U_1\times U_2\times\mathbb{R}^{n-1}$ and let us consider a map $\eta\colon U\rightarrow\mathbb{R}^{n}$ such that the partial application $\eta(t,s,\cdot)$ parametrizes the hyperboloid sheet $H(t,s)$. Since $H(t,s)$ is non-degenerated and $a$, $b$, $c$ and $w$ are continuous, we can assume that $\eta(t,s,\cdot)$ is regular. Using $\eta$, we define the map $\sigma\colon U\rightarrow\mathbb{R}^{n_v n}$ as
\[ \sigma(t,s,z)=(\sigma_1(t),\sigma_2(s),\eta(t,s,z)). \]
This map parametrizes a neighborhood of $\beta$ in $\Omega(\pazocal{H})$. Let us see, if it is regular. 

We already know that $\sigma_1$, $\sigma_2$ and $\eta(t,s,\cdot)$ are smooth maps. Thus the only thing that can spoil the smoothness of $\sigma$ are the differences in (\ref{eqci}), (\ref{eqbi}) and (\ref{eqwi}). But since $\beta$ is regular and the inequality (\ref{eqSigma12Ineq}) holds true in $U$, these differences are non-zero and all maps in (\ref{eqMapsabcew}) are smooth. Hence $\sigma$ is also smooth. The injectivity of $\sigma$ follows from the injectivity of $\sigma_1$ and $\sigma_2$. 

As for the continuity of the inverse, let us suppose that the inverse of $\sigma$ is not continuous. This means that there are some limit points $x_0$ and $y_0$ of $U$ such that $x_0\neq y_0$ and
\[ \lim\limits_{x\rightarrow x_0, y\rightarrow y_0}\|\sigma(x)-\sigma(y)\|=0. \]
But since $\sigma_1$, $\sigma_2$ and $\eta(t,s,\cdot)$ all have a continuous inverse, this is not possible. 

The last property to examine is the linear independence of the first derivatives of $\sigma$. The derivatives with respect to the components of $t$, $s$ and $z$ have the form
\begin{eqnarray}
	\frac{\partial\sigma}{\partial t^i} & = & \left(\frac{\partial\sigma_1}{\partial t^i}(t),{\bf 0},\frac{\partial\eta}{\partial t^i}(t,s,z)\right), \quad i=1,\dots,n_1, \label{eqSigmaDer1} \\
	\frac{\partial\sigma}{\partial s^j} & = & \left({\bf 0},\frac{\partial\sigma_2}{\partial s^j}(s),\frac{\partial\eta}{\partial s^j}(t,s,z)\right), \quad j=1,\dots,n_2, \label{eqSigmaDer2}\\
	\frac{\partial\sigma}{\partial z^k} & = & \left({\bf 0},{\bf 0},\frac{\partial\eta}{\partial z^k}(t,s,z)\right), \quad k=1,\dots, n-1,\label{eqSigmaDer3} 
\end{eqnarray}
where ${\bf 0}$ represents the vector of zeros of the corresponding dimension. The independence of the first derivatives of $\sigma_1$ guarantees the independence of the derivatives (\ref{eqSigmaDer1}). Similarly, the independence of the derivatives of $\sigma_2$ and $\eta(t,s,\cdot)$ implies the independence of the derivatives (\ref{eqSigmaDer2}) and (\ref{eqSigmaDer3}), respectively. From this, based on the position of zeros in the derivatives of $\sigma$, we can deduce that they are all linearly independent.  

Finally, when one of the children of the root of $\Psi_{\beta}$ is a leaf $v_i$, $i\in\{n_v+1,\dots,n_t\}$, the procedure and conclusions are analogous with the difference that $\sigma_2(s)$ is replaced by $v_i$ and $\sigma$ has only two parameters $t$ and $z$. 

\end{proof}

A singular point arises, if for some $v_i$ and its sibling $v_j$ we have
\[ \|v_i-v_j\|=|h(v_i)-h(v_j)|. \]
If $v_i=v_j$, then instead of a hyperboloid sheet, we will have a whole $n$-dimensional affine space. Thus in the corresponding singular points, $\Omega(\pazocal{H})$ is not locally homeomorphic to a Euclidean space. If $v_i\neq v_j$, the corresponding hyperboloid sheet collapses to a half line and its vertex becomes a cusp. As a consequence, a cusp will appear in $\Omega(\pazocal{H})$. Depending of the placement of the singular points, cusps might be also joined to ridges.

Two examples of how $\Omega(\pazocal{H})$ can look are shown in Fig. \ref{FigOmega}. Of course, in most cases it is impossible to visualize the feasible set because of its dimension and it is not easy to at least approximately imagine it. In Fig. \ref{FigOmega}, we show two very simple situations. In the first picture, we have a planar LED tree with only two inner vertices. In this case, $\Omega(\pazocal{H})$ is a two-dimensional surface in 4D. So, even though we cannot go any simpler, we are not able to fully display it. However, for the example that we present, one of the inner vertices ($X$) has its first coordinate equal to zero. That allows us to neglect this coordinate and show $\Omega(\pazocal{H})$ as a surface in 3D. As we can see, it has one cusp. It corresponds to the tree where $X=R$. The second example  shows a situation with four leaves and three inner vertices. To be able to see something, we fix one inner vertex ($Y$), so what we are showing is just a section of $\Omega(\pazocal{H})$. And again, the first coordinate of the vertex $X$ is zero. In this case, when $X=Y$, we will have singular points where $\Omega(\pazocal{H})$ is not locally homeomorphic to a Euclidean space.

\begin{figure}[h]
	\centering
	\includegraphics[width=0.35\textwidth]{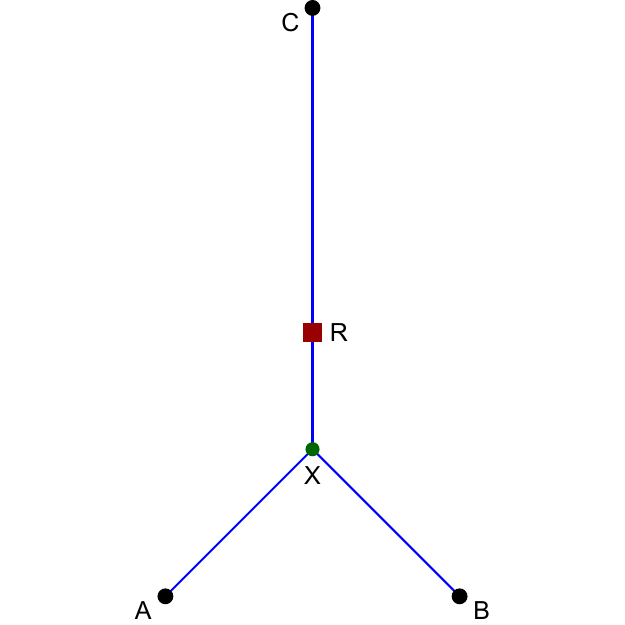} \hspace{0.3cm}
	\includegraphics[width=0.35\textwidth]{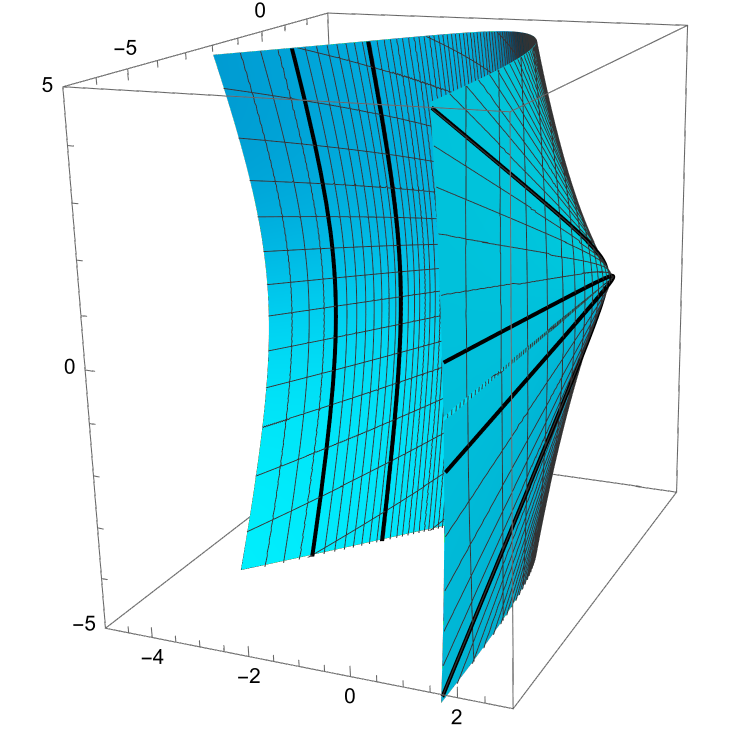}\vspace{0.1cm} \\
	\includegraphics[width=0.35\textwidth]{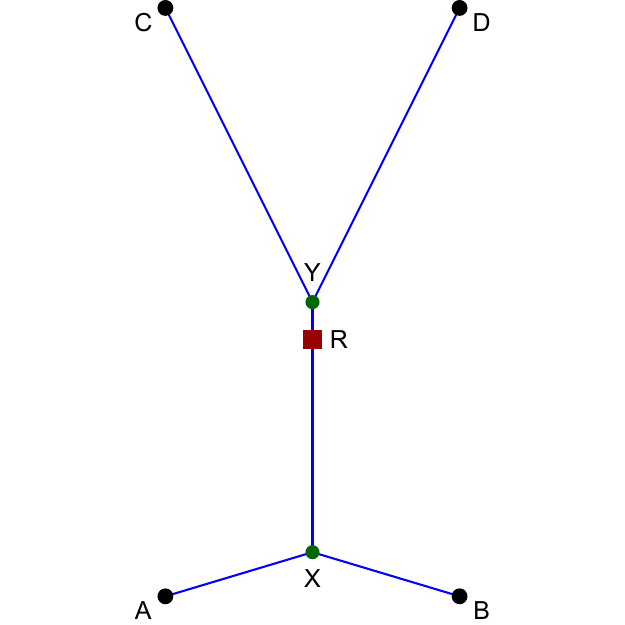}  \hspace{0.3cm}
	\includegraphics[width=0.35\textwidth]{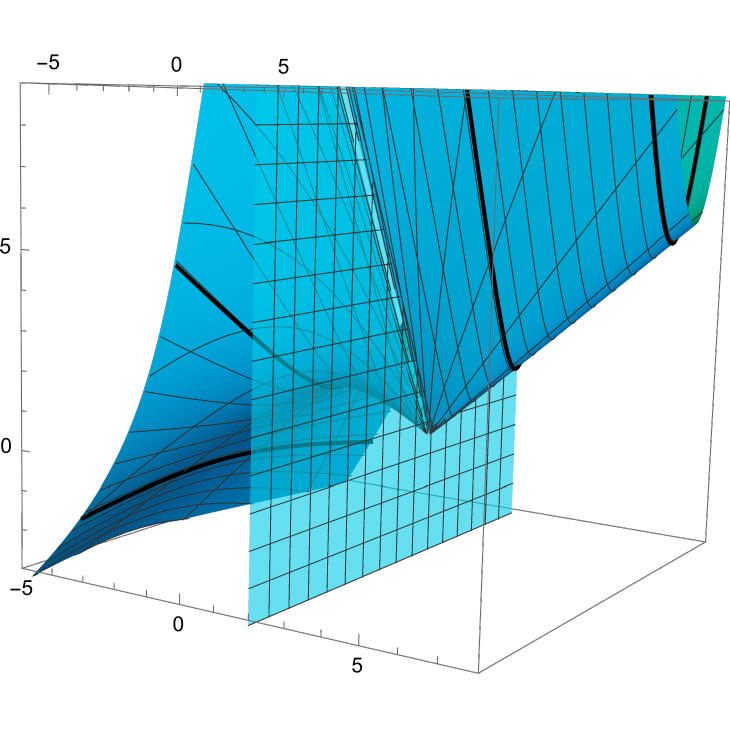} 
	\caption{Examples of the set $\Omega(\pazocal{H})$. In the first row, we show a situation with two inner vertices. The cusp corresponds to the tree where $X=R$. The rest of the singular points lie on the half line starting at the cusp. All other points are regular. In the second row, we present a case with three inner vertices. The vertex $Y$ is fixed, so we display only a section of $\Omega(\pazocal{H})$. The plane is a part of the set; it corresponds to the trees where $X=Y$. The cusp corresponds to the tree where $X=Y=R$. In both examples, we highlighted some curves -- those are some of the hyperbolas where the root $R$ can be situated.}
	\label{FigOmega}
\end{figure}

\subsection{The tangent space of $\Omega(\pazocal{H})$}\label{secMovingInsideOmega}

	Later on, in order to characterize stationary points of the objective function $\lambda$ in $\Omega(\pazocal{H})$, we will need its directional derivatives. We could obtain the available directions by differentiating the map $\sigma$ but, in fact, we have no explicit expression for $\sigma$. However, for a regular point $\beta$, the existence of $\sigma$ guarantees the existence of the tangent space $T_{\beta}\,\Omega(\pazocal{H})$ (as a linear space generated by the derivatives of $\sigma$) and its dimension. To be actually able to differentiate $\lambda$, we will now explore $T_{\beta}\,\Omega(\pazocal{H})$ from another point of view.
	 
	 Let us take any inner vertex $v_i$ of $\Psi_{\beta}$. As we already know, $v_i$ lies on a hyperboloid sheet $H_i$. Now let us imagine that we start moving $v_i$ within $H_i$. This, in general, cannot be done for free -- in order to stay in the feasible set, the other inner vertices will have to adjust to the changing position of $v_i$. However, not all vertices will have to move; the ones forced to react are only those on the root path of $v_i$. This follows directly from the construction of $\sigma$. 
	 
	 Let us explore how exactly will the affected vertices move. Let us consider the vertex $v_i$ and the following notation.
\begin{notn}\label{notRootPath}
	Let the root path of $v_i$ contain $m+2$ vertices $v_{i_0}=v_i$, $v_{i_1},\dots, v_{i_{m}}, v_{i_{m+1}}=v_r$. For the vertex $v_{i_k}$, $k=1,\dots, m+1$, let
\begin{itemize}
	\item{$e_{i_k}^+$ be the edge that is on the path and connects $v_{i_k}$ with its child,}
	\item{$e_{i_k}^-$ be the edge adjacent to $v_{i_k}$ that is not on the root path,}
	\item{$u_{i_k}^+$ and $u_{i_k}^-$ denote the unit vectors corresponding to $e_{i_k}^+$ and $e_{i_k}^-$, analogously to notation \ref{notBasic}.}
\end{itemize}
	 For $k=0$, we set $e_{i_0}^+=e_{i_L}$, $e_{i_0}^-=e_{i_R}$ and $u_{i_0}^+$ and $u_{i_0}^-$ accordingly. Note that $u_{i_{k-1}}^U=-u_{i_k}^+$.
\end{notn}
	 
	 Now let us say that the vertex $v_{i_{k-1}}$ moves with velocity $q_{i_{k-1}} \alpha_{i_{k-1}}$, where the direction $\alpha_{i_{k-1}}\in\mathbb{R}^n$ is a unit vector and $q_{i_{k-1}}\in\mathbb{R}$ is the speed of the movement. It is not difficult to determine the speed, at which the parent of $v_{i_{k-1}}$ -- the vertex $v_{i_k}$ -- will move. Let us suppose that we want $v_{i_k}$ to move in the direction $\alpha_{i_k}$, which is a unit vector, and the speed will be denoted by $q_{i_k}$. The feasibility condition says that all leaf paths of $v_{i_k}$ must maintain the equality of their lengths. In an arbitrary path containing $e_{i_k}^+$ and $e_{i_{k-1}}^-$, only the lengths of these two edges will be changed. In any path that contains $e_{i_k}^-$, the length of this edge will be the only one to be modified. The changes in these paths must be equal and thus we have
\[ q_{i_{k-1}}\alpha_{i_{k-1}}\cdot u_{i_{k-1}}^- +q_{i_{k-1}}\alpha_{i_{k-1}}\cdot u_{i_{k-1}}^U+q_{i_k}\alpha_{i_k}\cdot u_{i_k}^+=q_{i_k}\alpha_{i_k}\cdot u_{i_k}^-, \]
which implies
\begin{equation}
	q_{i_k}=\frac{q_{i_{k-1}}\alpha_{i_{k-1}}\cdot(u_{i_{k-1}}^-+u_{i_{k-1}}^U)}{\alpha_{i_k}\cdot(u_{i_k}^- -u_{i_k}^+)}. 
	\label{eqSpeedq}
\end{equation}
Of course, in order for the equality (\ref{eqSpeedq}) to make sense, we have to make sure that the denominator is not zero. Since we are at a regular point, we have $u_{i_k}^- - u_{i_k}^+\neq 0$. Thus the dot product of $\alpha_{i_k}$ and $u_{i_k}^- - u_{i_k}^+$ will be zero only if these two vectors are perpendicular to each other. To see when this happens, let us recall that $v_{i_k}$ lies on a hyperboloid sheet $H_{i_k}$. Let us first assume that $u_{i_k}^-\neq -u_{i_k}^+$ and let us consider the vector $u_{i_k}^- + u_{i_k}^+$. The vectors $u_{i_k}^-$ and $u_{i_k}^+$ together with $v_{i_k}$ determine a plane and the section of $H_{i_k}$ by this plane is a hyperbola that bisects the angle formed by $e_{i_k}^+$ and $e_{i_k}^-$. That means that $u_{i_k}^- + u_{i_k}^+$ is tangent to that hyperbola. The vector $u_{i_k}^- - u_{i_k}^+$ lies in the space spanned by $u_{i_k}^-$ and $u_{i_k}^+$ and is perpendicular to $u_{i_k}^- + u_{i_k}^+$. Therefore it is also normal to the mentioned hyperbola and to $H_{i_k}$ at $v_{i_k}$. In case when $u_{i_k}^-= -u_{i_k}^+$, the point $v_{i_k}$ is the vertex of $H_{i_k}$ and $u_{i_k}^- -u_{i_k}^+=2u_{i_k}^-$ is normal to $H_{i_k}$ at $v_{i_k}$ as well. The fact that $\alpha_{i_k}$ cannot be perpendicular to $u_{i_k}^- - u_{i_k}^+$ thus means that it cannot be tangent to $H_{i_k}$ at $v_{i_k}$. Otherwise it can be chosen arbitrarily.
	 	 
	Understanding what the movement of one vertex causes, we can construct a general tangent vector $\tau\in T_{\beta}\,\Omega(\pazocal{H})$. The procedure is illustrated in Fig. \ref{FigBasisOmega}. For an arbitrarily chosen $i\in\{1,\dots,n_v\}$, let us pick any vector $w_i$ from the tangent space $T_{v_i}H_i$ of $H_i$ at $v_i$ and let us set the components of $\tau$ in the $i$-th $(n-1)$-tuple to be equal to the components of $w_i$. Next, for $k=1,\dots,m$, the components in the $i_k$-th $(n-1)$-tuple will be assigned the values of the components of $q_{i_k}\alpha_{i_k}$, where $\alpha_{i_k}$ is an arbitrary admissible unit vector and $q_{i_k}$ is computed from (\ref{eqSpeedq}). The rest of the components will be set to zero. 
	
	Now let $\mathcal{T}(w_i)$ be the set of all vectors from $T_{\beta}\,\Omega(\pazocal{H})$ constructed from $w_i$ by the above procedure. It turns out that, under certain conditions, some of these vectors have a special property that will be useful at several further places and that could be called ``height preservation''. An example is shown in Fig. \ref{FigBasisOmega} (on the right). Let us consider the functions $h_i\colon\Omega(\pazocal{H})\rightarrow\mathbb{R}$, $i=1,\dots,n_v$, where $h_i(\beta)$ is the height of the $i$-th vertex of $\Psi_{\beta}$. Then the following lemma holds.
	
\begin{lem}\label{lemHP}
	Let $\beta$ be a regular point of $\Omega(\pazocal{H})$. Let $v_i$ be any inner vertex of $\Psi_{\beta}$ and let us use the notation \ref{notRootPath} for its root path. Let us have $p\in\{1,\dots,m+1\}$ and let us assume that $u_{i_k}^+\neq-u_{i_k}^-$ for all $k=1,\dots,p$. Then for any $w_i\in T_{v_i}H_i$, there is a tangent vector $\tau\in\mathcal{T}(w_i)$ such that
	\begin{equation}
		\nabla_{\tau}\,h_{i_k}(\beta)=0
		\label{eqDhij}
	\end{equation}
for all $j=1,\dots,p$.
\end{lem}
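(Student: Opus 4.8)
The plan is to use the freedom one has in choosing the directions $\alpha_{i_k}$ when assembling a vector of $\mathcal{T}(w_i)$ by the procedure that precedes the lemma, and to tune those directions so that each of the first $p$ height functions along the root path of $v_i$ acquires a vanishing directional derivative. So the first step is to obtain a usable expression for $\nabla_\tau h_{i_k}(\beta)$. Let $c_k$ be the child of $v_{i_k}$ joined to it by the off-path edge $e_{i_k}^-$ (for $k=0$, the child reached through $e_{i_R}$). Because $\Psi_\beta$ is a LED tree, the subtree rooted at $v_{i_k}$ is again a LED tree, so all of its leaf paths out of $v_{i_k}$ have the same length; taking the one that begins with $e_{i_k}^-$ gives, on a neighbourhood of $\beta$ in $\Omega(\pazocal{H})$,
\[
   h_{i_k} \;=\; \|e_{i_k}^-\| + h_{c_k}.
\]
Now every $\tau\in\mathcal{T}(w_i)$ moves only vertices lying on the root path of $v_i$; in particular $c_k$ and its whole subtree remain fixed, so $h_{c_k}$ is constant along $\tau$. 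Since $\beta$ is regular, $\|e_{i_k}^-\|>0$, hence $\|e_{i_k}^-\|$ is smooth near $\beta$, and differentiating in the direction $\tau$ --- along which $v_{i_k}$ has velocity $q_{i_k}\alpha_{i_k}$ while $c_k$ is fixed --- yields
\[
   \nabla_\tau h_{i_k}(\beta) \;=\; q_{i_k}\,\bigl(\alpha_{i_k}\cdot u_{i_k}^-\bigr),
\]
where $u_{i_k}^-$ points from $c_k$ to $v_{i_k}$. (For the root, i.e.\ $k=m+1$, the same computation holds with the root's velocity in place of $q_{i_{m+1}}\alpha_{i_{m+1}}$.)

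Given this identity, it suffices to choose, for each $k=1,\dots,p$, the direction $\alpha_{i_k}$ to be admissible (a unit vector not perpendicular to $u_{i_k}^--u_{i_k}^+$, as the construction requires) and at the same time orthogonal to $u_{i_k}^-$; for the remaining $k=p+1,\dots,m$ any admissible direction will do. The existence of such an $\alpha_{i_k}$ is exactly where the hypotheses enter: $e_{i_k}^+$ and $e_{i_k}^-$ are the two edges from $v_{i_k}$ to its children, so $\{u_{i_k}^+,u_{i_k}^-\}=\{u_{i_k}^L,u_{i_k}^R\}$ and regularity forces $u_{i_k}^+\neq u_{i_k}^-$, while the assumption of the lemma gives $u_{i_k}^+\neq -u_{i_k}^-$; hence $u_{i_k}^+$ is not a scalar multiple of $u_{i_k}^-$, so its orthogonal projection onto the hyperplane $(u_{i_k}^-)^\perp$ is nonzero. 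Taking $\alpha_{i_k}$ to be the unit vector in the direction of that projection gives $\alpha_{i_k}\cdot u_{i_k}^-=0$ and $\alpha_{i_k}\cdot u_{i_k}^+>0$, whence $\alpha_{i_k}\cdot(u_{i_k}^--u_{i_k}^+)=-\alpha_{i_k}\cdot u_{i_k}^+\neq 0$, so $\alpha_{i_k}$ is admissible. Feeding these directions, together with the prescribed velocity $w_i$ of $v_i=v_{i_0}$, into the recursion (\ref{eqSpeedq}) produces well-defined speeds $q_{i_1},\dots,q_{i_m}$ (all denominators $\alpha_{i_k}\cdot(u_{i_k}^- - u_{i_k}^+)$ are nonzero), hence a vector $\tau\in\mathcal{T}(w_i)$ with $\nabla_\tau h_{i_k}(\beta)=q_{i_k}(\alpha_{i_k}\cdot u_{i_k}^-)=0$ for all $k$ with $1\le k\le\min(p,m)$.

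When $p=m+1$ one further step is needed: the root also reacts, and its admissible velocities $\dot v_r$ form an affine hyperplane --- the first-order feasibility condition at the root, i.e.\ the analogue of (\ref{eqSpeedq}) with the direction $u_r^--u_r^+$ in the denominator position. Since $u_r^+\neq\pm u_r^-$ (regularity plus the hypothesis for $k=m+1$), the normal $u_r^--u_r^+$ of this hyperplane is not parallel to $u_r^-$, so the hyperplane meets $(u_r^-)^\perp$; picking $\dot v_r$ in the intersection makes $\nabla_\tau h_{i_{m+1}}(\beta)=\dot v_r\cdot u_r^-=0$ as well, and completes the construction of the required $\tau$.

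I expect the only real subtlety to be the first step --- isolating the clean identity $h_{i_k}=\|e_{i_k}^-\|+h_{c_k}$ from the recursive LED structure and checking it is genuinely differentiable at the regular point $\beta$ --- together with the bookkeeping needed to put the root on the same footing as the other vertices of the root path. Once the derivative formula is in hand, the choice $\alpha_{i_k}\perp u_{i_k}^-$ is essentially forced, and the short check that it remains admissible is precisely where the assumption $u_{i_k}^+\neq -u_{i_k}^-$ (and nothing weaker) is used.
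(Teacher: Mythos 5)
Your proposal is correct and follows essentially the same route as the paper's proof: choose each $\alpha_{i_k}$ orthogonal to $u_{i_k}^-$, use $u_{i_k}^+\neq -u_{i_k}^-$ (together with regularity) to check that such a direction is still admissible for the recursion (\ref{eqSpeedq}), and observe that since the off-path subtree is frozen, the height $h_{i_k}$ reduces to $\|e_{i_k}^-\|$ plus a constant, whose directional derivative $q_{i_k}(\alpha_{i_k}\cdot u_{i_k}^-)$ then vanishes. Your write-up is in fact somewhat more explicit than the paper's (the projection construction of $\alpha_{i_k}$ for $n\geq 3$ and the separate treatment of the root), but these are refinements of the same argument rather than a different approach.
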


\begin{proof}
	Let us have an arbitrary $w_i\in T_{v_i}H_i$ and let $\alpha_{i_k}\in\mathbb{R}^{n-1}$, $k=1,\dots, p$, be unit vectors such that $\alpha_{i_k}\perp u_{i_k}^-$. Since $u_{i_k}^+\neq-u_{i_k}^-$, the vector $\alpha_{i_k}$ is not tangent to $H_{i_k}$ at $v_{i_k}$. Therefore all vectors $\alpha_{i_k}$, $k=1,\dots, p$, can be used for construction of a tangent vector $\tau\in\mathcal{T}(w_i)$ using (\ref{eqSpeedq}). Since $\alpha_{i_k}\perp u_{i_k}^-$, the corresponding directional derivative of the length of any leaf path of $v_{i_k}$ containing $e_{i_k}$ is zero. The feasibility conditions then imply (\ref{eqDhij}).
\end{proof}
	
\begin{figure}[h]
	\centering
	\includegraphics[width=0.4\textwidth]{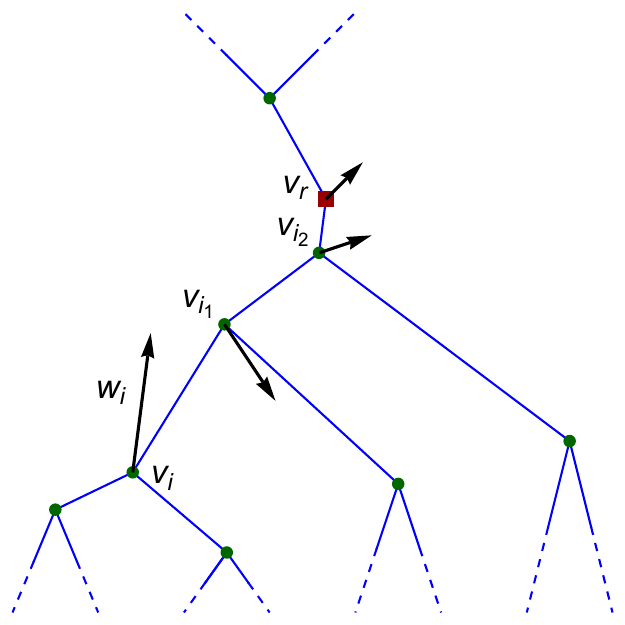} \hspace{0.1cm}
	\includegraphics[width=0.4\textwidth]{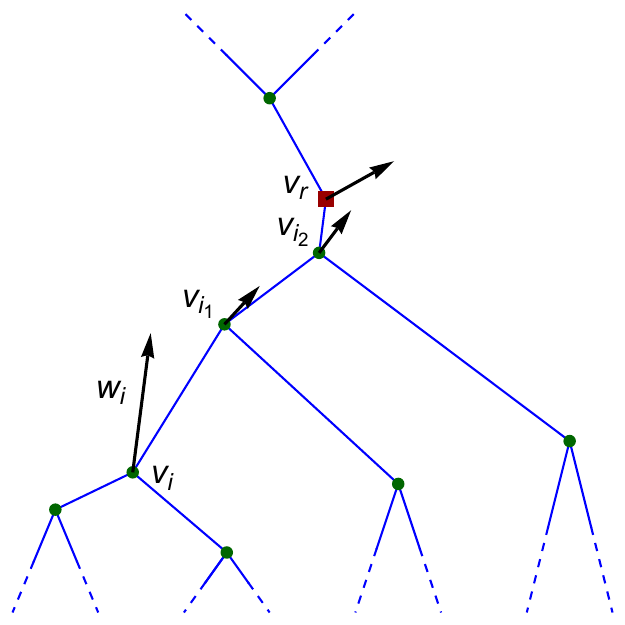} 
	\caption{Examples of vectors from a tangent space of the feasible set. On the left, we can see a vector where $w_i=u_i^L+u_i^R$. Moving $v_i$ with the velocity $w_i$ will provoke movement of all vertices on its root path. The directions of their velocities can be chosen arbitrarily as far as they are not tangent to the corresponding hyperboloid sheets. The speeds are computed according to (\ref{eqSpeedq}). The coordinates of these velocities are used as components of the corresponding tangent vector. On the right, we can see a height preserving direction according to Lemma \ref{lemHP}. The velocities are chosen so that the height of each inner vertex except $v_i$ remains unchanged.}
	\label{FigBasisOmega}
\end{figure}

% *****************************************************************************

\section{Stationary points of the objective function}\label{secStationaryPoints}

A regular point $\beta$ is a stationary point of $\lambda$ in $\Omega(\pazocal{H})$, if
\[ \nabla_{\tau}\lambda(\beta)=0 \]
for any $\tau\in T_{\beta}\Omega(\pazocal{H})$. 
It is not difficult to see that $\lambda$ cannot have a local maximum at $\beta$: whatever tree we have, we can always construct a tree of a higher length by moving the root appropriately within the hyperboloid sheet $H_r$. Other than that, we are not able to say much more about the types or number of the stationary points in this moment; we will get to this question later. 

In this section, we will discuss some important properties of LED trees corresponding to stationary points. As we will see, these trees have some specific characteristics. 

\begin{lem}\label{lemma1}
	Let $\beta$ be a stationary point of $\lambda$ on $\Omega(\pazocal{H})$. Then we have $u_r^L=-u_r^R$.
\end{lem}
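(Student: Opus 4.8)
The plan is to test stationarity against the deformations that move only the root. Because $\beta$ is regular, the root $v_r$ lies on a non-degenerate hyperboloid sheet $H_r$ with foci $v_r^L$ and $v_r^R$: as noted in the proof of Proposition~\ref{propParametrization}, regularity forces the inequality~(\ref{eqHypExistence}) to be strict for the pair $v_r^L,v_r^R$, so $H_r$ is a smooth $(n-1)$-dimensional surface through $v_r$. Displacing $v_r$ along $H_r$ while leaving every other vertex in place keeps the tree inside $\Omega(\pazocal{H})$, because the only feasibility constraint into which the lengths $\|v_r-v_r^L\|$ and $\|v_r-v_r^R\|$ enter is the one attached to the root, and with all other vertices frozen that constraint is exactly the hyperboloid equation~(\ref{eqHyperboloid}). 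Hence for every $w\in T_{v_r}H_r$ the tangent vector $\tau_w\in T_\beta\,\Omega(\pazocal{H})$ describing ``move $v_r$ with velocity $w$, keep everything else fixed'' is legitimate; this is the family $\mathcal{T}(w)$ of Sec.~\ref{secMovingInsideOmega} specialized to $i=r$, where the root path of $v_r$ is trivial and no other vertex is forced to react.

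The second step is to differentiate $\lambda$ along $\tau_w$. Since only the edges $e_{r_L}$ and $e_{r_R}$ change length,
\[
\nabla_{\tau_w}\lambda(\beta)=w\cdot u_r^L+w\cdot u_r^R=w\cdot\bigl(u_r^L+u_r^R\bigr),
\]
and stationarity makes this vanish for every $w\in T_{v_r}H_r$. Thus $u_r^L+u_r^R$ is orthogonal to $T_{v_r}H_r$, i.e. it lies on the one-dimensional normal line of $H_r$ at $v_r$. On the other hand, the reflective property of the hyperbola already used in Sec.~\ref{secMovingInsideOmega} --- the section of $H_r$ by the plane through $v_r$ spanned by $u_r^L$ and $u_r^R$ is a hyperbola whose tangent at $v_r$ bisects the angle $\angle v_r^L v_r v_r^R$ --- shows that $u_r^L+u_r^R$ is tangent to $H_r$ at $v_r$ (trivially so when it is the zero vector). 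A vector lying in both $T_{v_r}H_r$ and its orthogonal complement is zero, so $u_r^L+u_r^R=0$, that is $u_r^L=-u_r^R$.

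I do not foresee a real obstacle here. The two geometric inputs --- that regularity makes $H_r$ non-degenerate, and that $u_r^L+u_r^R$ points along the tangent of $H_r$ at $v_r$ --- are already established in the text, and everything else is a one-line derivative computation together with the elementary fact $T\cap T^{\perp}=\{0\}$. The only point that needs a little care is justifying that moving the root alone is an admissible deformation, and that follows at once from the product structure of the parametrization $\sigma$ built in Proposition~\ref{propParametrization}.
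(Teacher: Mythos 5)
Your proof is correct and follows essentially the same route as the paper's: both test stationarity against perturbations of the root alone within the hyperboloid sheet $H_r$, and both rest on the reflection (angle-bisector) property that makes $u_r^L+u_r^R$ tangent to $H_r$ at $v_r$. The paper's version is terser---it simply exhibits the geodesic toward the vertex of $H_r$ as a descent direction when $u_r^L\neq -u_r^R$---whereas you package the same first-order information as a tangency-versus-orthogonality clash, which is if anything more carefully justified.
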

\begin{proof}
	If $u_r^L\neq - u_r^R$, then the root of $\Psi_{\beta}$ is not situated in the vertex of the hyperboloid sheet $H_r$. If we move the root along the geodesic connecting it with the vertex, the value of $\lambda$ will decrease. Since there is a decreasing direction, $\beta$ cannot be a stationary point of $\lambda$.
\end{proof}

Lemma \ref{lemma1} says that the root of the tree corresponding to a stationary point always lies on the line segment connecting its children. Let this segment be denoted by $e_0$. 

For simplicity, several upcoming propositions will be formulated for a specific (but still very general) type of stationary point.

\begin{dfn}\label{def3dimStatPoint}
	A stationary point $\beta$ of $\lambda$ in $\Omega(\pazocal{H})$ is called {\em properly forked}, if $u_i^L$, $u_i^R$ and $u_i^U$ are three different vectors for each $i$, $i=1,\dots, n_v$, $i\neq r$.
\end{dfn}

Before we move on, let us introduce some other useful notations. 
\begin{notn}\label{notLambdas}
	Let us consider the following functions, all of them defined on $\Omega(\pazocal{H})$:
	\begin{itemize}
		\item{$\lambda_0\colon\Omega(\pazocal{H})\rightarrow\mathbb{R}$, where $\lambda_0(\beta)$ is the length of $e_0$ in $\Psi_{\beta}$,}
		\item{$\lambda_{i_k}^+$ and $\lambda_{i_k}^-$ defined for any $i=1,\dots,n_v$, where $\lambda_{i_k}^+(\beta)$ and $\lambda_{i_k}^-(\beta)$ are the lengths of the edges $e_{i_k}^+$ and $e_{i_k}^-$ on the root path of $v_i$ in $\Psi_{\beta}$.}
	\end{itemize}
\end{notn}

\begin{lem}\label{lemma2}
	Let $\beta$ be a properly forked stationary point of $\lambda$ on $\Omega(\pazocal{H})$. Then $u_i^L\neq -u_i^U$ and $u_i^R\neq -u_i^U$ for each $i=1,\dots,n_v$, $i\neq r$.
\end{lem}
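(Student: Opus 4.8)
The plan is a proof by contradiction that produces an admissible direction at $\beta$ along which $\lambda$ fails to be stationary. By the obvious symmetry (interchanging the two children of $v_i$) it is enough to refute $u_i^L=-u_i^U$, so suppose $u_i^L=-u_i^U$ for some inner vertex $v_i\neq v_r$. Since a stationary point is in particular regular, all edges of $\Psi_\beta$ have positive length, so the relation $u_i^L=-u_i^U$ says exactly that $v_i^L$, $v_i$ and $v_i^U$ are collinear with $v_i$ strictly between $v_i^L$ and $v_i^U$. The key observation is that, because of this collinearity, $v_i$ can be moved within its hyperboloid sheet $H_i$ without forcing any other vertex to move: the ``height gained'' in the subtree below $v_i$ is exactly compensated by the ``length lost'' on the edge $e_{i_U}$.

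To make this rigorous I would take any $d\in T_{v_i}H_i$ and run the tangent-vector construction of Section~\ref{secMovingInsideOmega} with $w_i=d$. Recall that $T_{v_i}H_i$ is the hyperplane $\{d : d\cdot u_i^L=d\cdot u_i^R\}$, with normal direction $u_i^L-u_i^R$. Then the numerator $d\cdot(u_{i_0}^-+u_{i_0}^U)=d\cdot(u_i^R+u_i^U)=-d\cdot(u_i^L-u_i^R)$ of the speed formula~(\ref{eqSpeedq}) vanishes (using $u_i^U=-u_i^L$), hence the induced speed of $v_i^U$, and therefore of every vertex higher up the root path, is zero. Thus the resulting $\tau\in T_\beta\Omega(\pazocal{H})$ moves only $v_i$, by $d$, and fixes every other vertex. (Alternatively one checks directly that such a $\tau$ annihilates the linearized constraints of~(\ref{eqOriginalMinProblem}): the equality attached to $v_i$ changes by $d\cdot u_i^L-d\cdot u_i^R=0$; every equality attached to an ancestor of $v_i$ changes by $0$, because any leaf path through $v_i$ enters $v_i$ along $e_{i_U}$ and leaves along $e_{i_L}$, so its length changes by $d\cdot u_i^U+d\cdot u_i^L=0$; and no other equality is affected. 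Since $\beta$ is regular, Proposition~\ref{propParametrization} identifies the tangent space at $\beta$ with the kernel of the constraint differential, so $\tau$ is indeed tangent.)

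Differentiating $\lambda$ along this $\tau$, only the three edges incident to $v_i$ contribute, so
\[
\nabla_\tau\lambda(\beta)=d\cdot u_i^L+d\cdot u_i^R+d\cdot u_i^U=d\cdot u_i^L+d\cdot u_i^R-d\cdot u_i^L=d\cdot u_i^R=d\cdot u_i^L ,
\]
the last equality by the defining relation of $T_{v_i}H_i$. It remains to pick $d\in T_{v_i}H_i$ with $d\cdot u_i^L\neq0$; this is possible unless $u_i^L$ lies in the line spanned by the normal $u_i^L-u_i^R$, i.e. unless $u_i^L=\pm u_i^R$. The case $u_i^L=u_i^R$ is excluded because $\beta$ is regular, and $u_i^L=-u_i^R$ would give $u_i^R=-u_i^L=u_i^U$, contradicting the properly-forked hypothesis. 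Hence a suitable $d$ exists, $\nabla_\tau\lambda(\beta)\neq0$, and $\beta$ is not stationary, a contradiction; the case $u_i^R=-u_i^U$ is identical, now using that $u_i^R=-u_i^L$ would force $u_i^L=u_i^U$. The only step that needs care is the one flagged above, namely justifying that ``move $v_i$, freeze everything else'' is a genuine element of $T_\beta\Omega(\pazocal{H})$ rather than a naive perturbation; both the speed-formula argument and the direct linearized-constraint check settle it, and the properly-forked assumption is used only at the very end, to ensure $u_i^L$ and $u_i^R$ are not opposite.
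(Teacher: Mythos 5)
Your proof is correct and follows essentially the same route as the paper's: both exploit the fact that under $u_i^L=-u_i^U$ a tangential perturbation of $v_i$ forces no other vertex to move (the numerator of (\ref{eqSpeedq}) vanishes), so $\nabla_\tau\lambda$ reduces to the contribution of $e_{i_R}$, which is nonzero because the properly-forked hypothesis rules out $u_i^L=-u_i^R$. The paper simply fixes the concrete choice $w_i=u_i^L+u_i^R$ where you argue with a generic $d\in T_{v_i}H_i$; the substance is identical.
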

\begin{proof}
	Let us suppose that $u_i^L=-u_i^U$ for some $i$. Let us consider a tangent vector $\tau$ constructed by the procedure presented in Sec. \ref{secMovingInsideOmega}, where $w_i=u_i^L+u_i^R$. Our assumption leads to
	\begin{equation}
		w_i\cdot u_i^L+w_i\cdot u_i^U=0, 
		\label{eqLemma2-1}
	\end{equation}
	which, based on (\ref{eqSpeedq}), implies that no other vertex will have to move in order for the tree to retain the LED property. Now, since $\beta$ is properly forked and we already have  $u_i^L=-u_i^U$, we cannot have $u_i^L=-u_i^R$. Therefore we cannot have $w_i\perp u_i^R$ and thus 
	\[ \nabla_{\tau}\lambda_{i_R}(\beta)\neq 0. \]
	But (\ref{eqLemma2-1}) means that 
	\[ \nabla_{\tau}\lambda_{i_L}(\beta)+\nabla_{\tau}\lambda_{i_U}(\beta)=0 \]
	and therefore
	\[ \nabla_{\tau}\lambda(\beta)=\nabla_{\tau}\lambda_{i_R}(\beta)\neq 0. \]
	This contradicts the assumption that $\beta$ is a stationary point. Analogously, we obtain that $u_i^R\neq -u_i^U$.
\end{proof}

For the sake of several proofs that follow, we will make use of Lemma \ref{lemHP} and we will define a specific type of height preserving direction. Let $\beta$ be a properly forked stationary point of $\lambda$ on $\Omega(\pazocal{H})$. Let us take an inner vertex $v_i$ of $\Psi_{\beta}$ and an arbitrary vector $w_i$ from $T_{v_i}H_i$. If $u_{i_k}^+\neq -u_{i_k}^-$ for $k=1,\dots,m$, then there are tangent vectors constructed from $w_i$ that preserve the height of all vertices on the root path of $v_i$ except $v_i$ itself and the root. Among these vectors, there is one that keeps the root on the line segment connecting its children. We will refer to this vector as $\tau_h(w_i)$. 

\begin{lem}\label{lemma3}
	Let $\beta$ be a properly forked stationary point of $\lambda$ on $\Omega(\pazocal{H})$. Then we have $u_i^L\neq -u_i^R$ for each $i=1,\dots,n_v$, $i\neq r$.
\end{lem}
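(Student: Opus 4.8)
The plan is a proof by contradiction built on the tangent-vector construction of Section~\ref{secMovingInsideOmega} and Lemma~\ref{lemHP}. Suppose $u_i^L=-u_i^R$ holds at some non-root inner vertex, and among all such vertices let $v_i$ be one of \emph{minimal depth}. The reason for choosing the shallowest one is that then every vertex strictly between $v_i$ and the root satisfies $u_{i_k}^+\neq -u_{i_k}^-$ (a shallower violation would contradict minimality), which is exactly the hypothesis needed to build the height-preserving directions $\tau_h$ along the root path of $v_i$.

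I would first unwind the geometric meaning of $u_i^L=-u_i^R$: it says $v_i$ lies on the open segment $v_i^Lv_i^R$, and the only point of the hyperboloid sheet $H_i$ on the line through its foci is its vertex (when $h(v_i^L)=h(v_i^R)$ this ``vertex'' is the midpoint and $H_i$ is a hyperplane, and everything below goes through unchanged). Hence $v_i$ is the vertex of $H_i$, so $T_{v_i}H_i=(u_i^L)^{\perp}$ and every $w_i\in T_{v_i}H_i$ satisfies $w_i\cdot u_i^L=w_i\cdot u_i^R=0$. By Lemma~\ref{lemma1} the root is likewise the vertex of $H_r$, so its two edges are collinear.

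Next I would construct a specific $\tau=\tau_h(w_i)$. Choose $w_i\in T_{v_i}H_i$ with $w_i\cdot u_i^U\neq 0$; this is possible since $u_i^U\neq\pm u_i^L$ ($u_i^U\neq u_i^L$ because $\beta$ is properly forked, and $u_i^U\neq -u_i^L=u_i^R$ again by proper forkedness). At each intermediate vertex $v_{i_k}$, $k=1,\dots,m$, take the propagation direction $\alpha_{i_k}$ to be a unit vector with $\alpha_{i_k}\perp u_{i_k}^-$ (the height-preserving choice from the proof of Lemma~\ref{lemHP}) but $\alpha_{i_k}\not\perp u_{i_k}^+$ and $\alpha_{i_k}\not\perp u_{i_k}^U$; such a vector exists because $u_{i_k}^+\neq\pm u_{i_k}^-$ (distinct by proper forkedness, non-opposite by the minimality of $v_i$) and $u_{i_k}^U\neq\pm u_{i_k}^-$ (distinct by proper forkedness, non-opposite by Lemma~\ref{lemma2}). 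With these choices the speeds from~(\ref{eqSpeedq}) stay finite and nonzero all the way up: since $\alpha_{i_{k-1}}\perp u_{i_{k-1}}^-$, the numerator at step $k$ is $q_{i_{k-1}}\,\alpha_{i_{k-1}}\cdot u_{i_{k-1}}^U$, nonzero by induction, with base value $w_i\cdot(u_i^R+u_i^U)=w_i\cdot u_i^U\neq 0$. Consequently the root's child $v_{i_m}$ moves with a displacement not perpendicular to the edge joining it to the root, i.e.\ $q_{i_m}\alpha_{i_m}\cdot u_{i_m}^U\neq 0$ (when $v_i$ is a child of the root there are no intermediate vertices and one reads $v_{i_m}=v_i$, $q_{i_m}\alpha_{i_m}=w_i$). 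Finally move the root along the segment through its children so as to keep its feasibility equation; call the resulting admissible tangent vector $\tau$.

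It then remains to compute $\nabla_{\tau}\lambda(\beta)$. The only edges whose lengths change are the two edges at $v_i$, the side edges $e_{i_k}^-$ ($k=1,\dots,m$), the $m+1$ edges on the root path of $v_i$, and the root's other edge $e_{i_{m+1}}^-$; every remaining edge lies in a subtree that is not moved. The two edges at $v_i$ contribute $w_i\cdot u_i^L$ and $w_i\cdot u_i^R$, both $0$; each side edge contributes $q_{i_k}\alpha_{i_k}\cdot u_{i_k}^-=0$ by the perpendicularity choice. The sum of the root-path contributions equals $\nabla_{\tau}(h_r-h_i)(\beta)$, because in a LED tree $\mathrm{depth}(v_i)=h(v_r)-h(v_i)$; here $\nabla_{\tau}h_i(\beta)=w_i\cdot u_i^L=0$ (the subtree below $v_i$ is fixed) and $\nabla_{\tau}h_r(\beta)=\nabla_{\tau}\|e_{i_{m+1}}^-\|(\beta)$ (the root's other subtree is fixed), so $\nabla_{\tau}\lambda(\beta)=2\,\nabla_{\tau}\|e_{i_{m+1}}^-\|(\beta)$. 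Differentiating the root's feasibility equation — both children of the root keep their heights, since $h(v_{i_m})$ is preserved and the other subtree is rigid — together with ``the root stays on the segment'' pins the root's velocity and gives $\nabla_{\tau}\|e_{i_{m+1}}^-\|(\beta)=\tfrac12\,q_{i_m}\alpha_{i_m}\cdot u_{i_m}^U$. Hence $\nabla_{\tau}\lambda(\beta)=q_{i_m}\alpha_{i_m}\cdot u_{i_m}^U\neq 0$, contradicting that $\beta$ is a stationary point. The step I expect to be the crux is the construction of the propagation chain in the third paragraph: producing one sequence of directions that is simultaneously admissible for~(\ref{eqSpeedq}), height-preserving at every intermediate vertex, and ``transverse'' enough at each step that the induced motion of the root's child does not collapse against the edge to the root — and then recognizing that this surviving quantity is exactly what $\nabla_{\tau}\lambda$ reduces to.
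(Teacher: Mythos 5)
Your proposal is correct and follows essentially the same route as the paper: it builds the height-preserving tangent direction $\tau_h(w_i)$ from a $w_i\in T_{v_i}H_i$ with $w_i\perp u_i^L,u_i^R$ but $w_i\not\perp u_i^U$, propagates it up the root path using Lemma~\ref{lemma2} and proper forkedness, and concludes that only the root segment's length changes, at a nonzero rate. The only (immaterial) differences are that you replace the paper's induction on depth by a minimal-depth counterexample and make the final bookkeeping of $\nabla_{\tau}\lambda$ slightly more explicit.
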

\begin{proof}
	We prove our statement by induction on the depth of $v_i$. First, let $v_i$ be a child of the root and let us suppose that $u_i^L=-u_i^R$. Let us take a non-zero vector $w_i\in T_{v_i}H_i$ that is orthogonal to $u_i^L$ and $u_i^R$ but not to $u_i^U$; this is possible since $\beta$ is properly forked. From Lemma \ref{lemma1} follows that also $w_i\not\perp e_0$. Let $\tau=\tau_h(w_i)$. Then we have
	\begin{equation}
		\nabla_{\tau}\lambda_{i_L}(\beta)=\nabla_{\tau}\lambda_{i_R}(\beta)=0
		\label{eqLemma3-1}
	\end{equation}
	and 
	\[ \nabla_{\tau}\lambda_0(\beta)\neq 0. \]
	Since the length of $e_0$ will be the only one to change in the whole tree, we have 
	\[  \nabla_{\tau}\lambda(\beta)=\nabla_{\tau}\lambda_0(\beta)\neq 0. \]
	Thus $\beta$ cannot be a stationary point.
	
	Next, let us assume that $v_i$ is an arbitrary non-root vertex and that our statement is true for all vertices on its root path, starting from its parent. At the same time, let us have $u_i^L=-u_i^R$. Again, let $\tau=\tau_h(w_i)$ with $w_i$ chosen analogously to the initial case above. The existence of $\tau_h(w_i)$ is guaranteed by the induction assumption. Let us find out, what the value of $\nabla_{\tau}\lambda(\beta)$ is in this case.
	
	Let us invoke the notations \ref{notRootPath} and \ref{notLambdas} for the root path of $v_i$. From the properties of $\tau$ and from (\ref{eqLemma3-1}), we have
	\begin{equation}
		\nabla_{\tau}\lambda_{i_k}^-(\beta)=0
		\label{eqLemma3-2}
	\end{equation}
	for $k=1,\dots,m$. The feasibility conditions then imply that also
	 \begin{equation}
		\nabla_{\tau}\lambda_{i_k}^+(\beta)=0
		\label{eqLemma3-3}
	\end{equation}
	for $k=1,\dots,m$. However, since $\beta$ is properly forked and, according to Lemma \ref{lemma2}, we have $u_{i_k}^-\neq -u_{i_k}^U$ for all $k=0,\dots,m$, we get
	\begin{equation}
		\alpha_{i_k}\cdot u_{i_k}^U\neq 0,
		\label{eqAlphaijUij}
	\end{equation}
	for $k=1,\dots, m$. This means that all vertices $v_{i_k}$, $k=1,\dots,m$, will have to move. This includes the vertex $v_{i_{m}}$ and therefore we have
	\begin{equation}
	 	\nabla_{\tau}\lambda_0(\beta)\neq 0.
	 	\label{eqLambdacr}
	\end{equation}
	 Finally, as a consequence of (\ref{eqLemma3-1}), (\ref{eqLemma3-2}) and  (\ref{eqLemma3-3}), we get
	 \begin{equation}
	  	\nabla_{\tau}\lambda(\beta)=\nabla_{\tau}\lambda_0(\beta). 
	  	\label{eqNablaLambda}
	 \end{equation}
	  Since the right hand side is non-zero, $\beta$ cannot be a stationary point.
\end{proof}

\begin{lem}\label{lemma4}
	Let $\beta$ be a stationary point of $\lambda$ on $\Omega(\pazocal{H})$. Then the vectors $u_i^L$, $u_i^R$ and $u_i^U$ are a linearly dependent tripple for all $i=1,\dots, n_v$, $i\neq r$.
\end{lem}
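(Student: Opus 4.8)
\emph{Proof strategy.} The plan is to fix an inner vertex $v_i$ with $i\neq r$ and to probe $\Omega(\pazocal{H})$ near $\beta$ only with tangent vectors that move $v_i$ while keeping every other vertex frozen at its position in $\Psi_{\beta}$. Writing $\tau_i\in\mathbb{R}^n$ for the velocity of $v_i$ along such a direction $\tau$, the length $\lambda$ changes only through the three edges incident to $v_i$, so $\nabla_{\tau}\lambda(\beta)=\tau_i\cdot(u_i^L+u_i^R+u_i^U)$; stationarity of $\beta$ will then force $u_i^L+u_i^R+u_i^U$ into the span of two explicit vectors, and linear dependence of $u_i^L,u_i^R,u_i^U$ will drop out by elementary linear algebra.

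First I would pin down which velocities $\tau_i$ keep the tree in $\Omega(\pazocal{H})$ when $v_i$ alone moves. Since the children of $v_i$ stay fixed, the equal-depth condition at $v_i$ is just $v_i\in H_i$, so $\tau_i$ must be tangent to $H_i$ at $v_i$, that is, $\tau_i\cdot(u_i^L-u_i^R)=0$ — recall from Sec.~\ref{secMovingInsideOmega} that $u_i^L-u_i^R$ is a normal of $H_i$ at $v_i$, and it is non-zero because $\beta$ is regular. Since the parent $v_i^U$ also stays fixed, the equal-depth condition at $v_i^U$ has one of its two sides frozen, while on the other side — the leaf path leaving $v_i^U$ through $v_i$, which by the turn-left convention continues from $v_i$ along $e_{i_L}$ — the only lengths that change are $\|e_{i_U}\|$ and $\|e_{i_L}\|$; hence this condition reduces to $\tau_i\cdot(u_i^L+u_i^U)=0$. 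On the tangent space of $H_i$ this is the same as $\tau_i\cdot(u_i^R+u_i^U)=0$, which is precisely the condition under which formula (\ref{eqSpeedq}) lets the parent — and then, by iteration, all higher ancestors — stay put. To make this rigorous I would exhibit the submanifold $S$ of $\Omega(\pazocal{H})$ consisting of all trees obtained from $\Psi_{\beta}$ by moving $v_i$ to a point $v_i'\in H_i$ with $\|v_i'-v_i^U\|+\|v_i'-v_i^L\|=\|v_i-v_i^U\|+\|v_i-v_i^L\|$ and leaving every other vertex fixed; that $S\subseteq\Omega(\pazocal{H})$ holds because the equal-depth property at any vertex depends only on that vertex, its children, and the heights of its children, so once $\|v_i'-v_i^U\|+h(v_i')$ is held constant the property propagates upward unchanged. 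In the $i$-th coordinate block, $T_{\beta}S$ is exactly the subspace $V:=\{u_i^L-u_i^R,\,u_i^L+u_i^U\}^{\perp}$ of $\mathbb{R}^n$.

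The conclusion then follows quickly. For $\tau\in T_{\beta}S$ only $\|e_{i_L}\|$, $\|e_{i_R}\|$, $\|e_{i_U}\|$ change, so $\nabla_{\tau}\lambda(\beta)=\tau_i\cdot(u_i^L+u_i^R+u_i^U)$; as $\beta$ is stationary this vanishes for every $\tau_i\in V$, hence $u_i^L+u_i^R+u_i^U\in V^{\perp}=\mathrm{span}\{u_i^L-u_i^R,\,u_i^L+u_i^U\}=:W$, which has dimension at most two. Now $u_i^R=(u_i^L+u_i^R+u_i^U)-(u_i^L+u_i^U)\in W$, whence $u_i^L=(u_i^L-u_i^R)+u_i^R\in W$ and $u_i^U=(u_i^L+u_i^U)-u_i^L\in W$; so $u_i^L$, $u_i^R$, $u_i^U$ all lie in the at-most-two-dimensional space $W$ and are therefore linearly dependent, as claimed.

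The step I expect to be the main obstacle is the verification that $S\subseteq\Omega(\pazocal{H})$ and the identification of $T_{\beta}S$: one has to run through the equal-depth constraints at every ancestor of $v_i$ and check, using the explicit left/right leaf-path definition, that each such constraint is either unaffected by moving $v_i$ or equivalent to $\tau_i\cdot(u_i^L+u_i^U)=0$. I would also dispose at the outset of the degenerate case where $u_i^L-u_i^R$ and $u_i^L+u_i^U$ are parallel: then $u_i^U\in\mathrm{span}\{u_i^L,u_i^R\}$ and the statement is immediate, so one may assume these two vectors independent, in which case $S$ is a genuine codimension-two submanifold whose $i$-th-block tangent space is exactly $V$.
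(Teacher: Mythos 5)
Your argument is correct, and it takes a genuinely different route from the paper's. The paper first reduces to properly forked stationary points and then probes $\beta$ with the height-preserving direction $\tau_h(w_i)$ of Sec.~\ref{secMovingInsideOmega}: choosing $w_i\in T_{v_i}H_i$ with $w_i\perp u_i^L+u_i^R$ but $w_i\not\perp u_i^U$ (possible exactly when the triple is independent) forces, via (\ref{eqSpeedq}) together with Lemmas \ref{lemma2}--\ref{lemma3}, a nonzero motion of the entire root path of $v_i$ up to the root's child, so that $\nabla_\tau\lambda(\beta)=\nabla_\tau\lambda_0(\beta)\neq 0$ --- a contradiction propagated globally along the tree. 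You instead deform only $v_i$, along the codimension-two intersection of $H_i$ with the ellipsoid $\|x-v_i^L\|+\|x-v_i^U\|=\mathrm{const}$; your verification that this stays in $\Omega(\pazocal{H})$ is sound (the quantity $\|v_i'-v_i^U\|+h(v_i')$, hence the height of $v_i^U$ and every constraint above it, is frozen), and the first-order condition then places $u_i^L+u_i^R+u_i^U$ in the two-dimensional span of the two constraint normals, from which dependence of the triple follows by your elementary manipulation. Your route is shorter and more local: it needs neither the properly forked hypothesis nor Lemmas \ref{lemma2}--\ref{lemma3} (and it covers uniformly the non-properly-forked case, which the paper dismisses as ``trivially true'' even though a failed fork at one vertex only settles the claim at that vertex), and you correctly dispatch the degenerate case where the two normals are linearly dependent. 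What the paper's heavier machinery buys is reuse: the same height-preserving directions and the chain computation along the root path reappear in the proof of Proposition \ref{propOptimalConvex} to derive the quantitative identity (\ref{eqY3}), which your purely local deformation cannot see.
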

\begin{proof}
	If $n=2$ or if $\beta$ is not properly forked, the statement is trivially true. So, let us consider that $\beta$ is properly forked and $n>2$. Let us take any non-root vertex $v_i$ and let us assume that $u_i^L$, $u_i^R$ and $u_i^U$ are linearly independent.
	
	Again, we set $\tau=\tau_h(w_i)$, but this time $w_i$ is such that $w_i\perp u_i^L+u_i^R$ but $w_i\not\perp u_i^U$. The existence of $\tau_h(w_i)$ follows from Lemma \ref{lemma3}. From here, we can use exactly the same arguments as in the induction step of the proof of Lemma \ref{lemma3}. Just as there, we will find out that $\nabla_{\tau}\lambda(\beta)\neq 0$ and therefore $\beta$ cannot be a stationary point.
\end{proof}

Let us summarize the results presented in this section. In a LED tree corresponding to a properly forked stationary point, the edges $e_{i_L}$, $e_{i_R}$ and $e_{i_U}$ lie in the same plane for any $i=1,\dots,n_v$. Moreover, if $i\neq r$, then no two of them are parallel. On the contrary, the edges $e_r^L$ and $e_r^R$ are parallel. An example of such a tree in 3D is shown in Fig. \ref{Fig3DLengthMinimizer}.

\begin{figure}[h]
	\centering
	\includegraphics[width=0.5\textwidth]{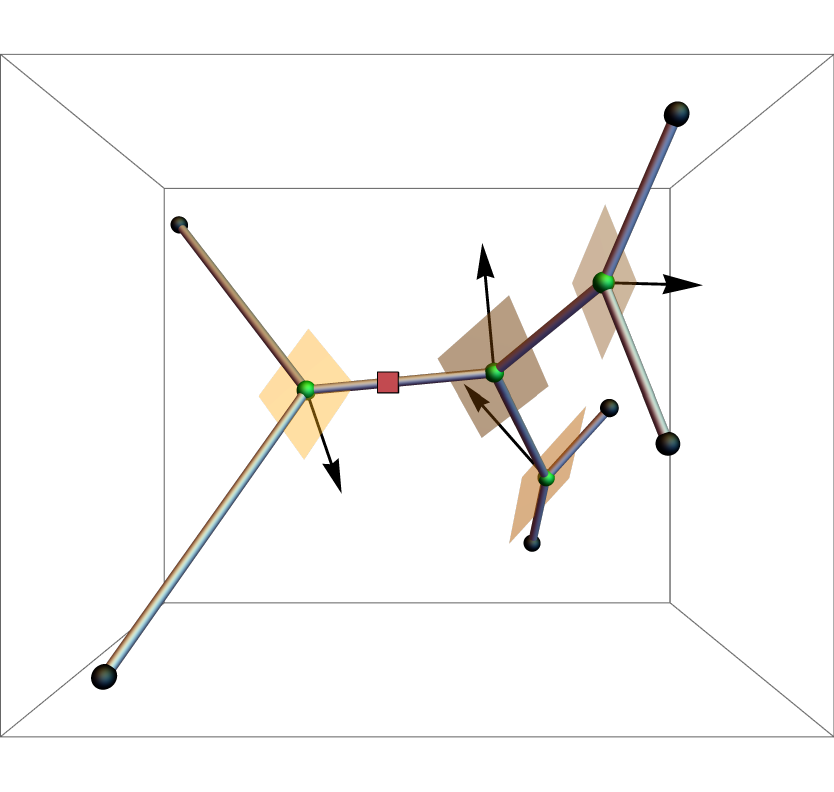} 
	\caption{An example of a three-dimensional tree corresponding to a properly forked stationary point. For each inner vertex, we show a piece of the plane where its adjacent edges lie and the normal to this plane. }
	\label{Fig3DLengthMinimizer}
\end{figure}

% *****************************************************************************

\section{A convex relaxation of the minimization problem}\label{secConvexRelaxation}

Coming back to the optimization problem (\ref{eqOriginalMinProblem}), we can see that the objective function $\lambda$ is a sum of convex functions and therefore it is also convex. However, as we can already imagine, the feasible set can look quite wild and is far from being convex. Because of this, the optimization problem seems rather complicated and it is not clear which method one should use. Also, one would naturally expect coming across local extrema and saddle points. Surprisingly, as we have already mentioned in the introductory part, stationary points that are not global minimizers are not really an issue. Local maxima were already excluded at the beginning of the previous section. To show the rest, we will use a convex relaxation of our problem.

The convex relaxation will use two variables: the vector $\beta\in\mathbb{R}^{n_v n}$ as in (\ref{eqOriginalMinProblem}) and in addition a vector $z\in\mathbb{R}^{n_e}$, $z=(z_1,\dots,z_{n_e})$. Let us use the same index sets $\pazocal{I}_i^L$ and $\pazocal{I}_i^R$ as in (\ref{eqOriginalMinProblem}).  Using this notation, we define convex functions $\varphi_i\colon\mathbb{R}^{n_e}\rightarrow\mathbb{R}$, $i=1,\dots, n_v$, and $f_j\colon \mathbb{R}^{n_v n}\times\mathbb{R}^{n_e}\rightarrow\mathbb{R}$, $j=1,\dots, n_e$, where
\begin{eqnarray}
	\varphi_i(z) & =& \sum\limits_{\iota\in\pazocal{I}_{i}^L} z_{\iota}-\sum\limits_{\iota\in\pazocal{I}_{i}^R} z_{\iota}, \label{eqPhi} \\
	f_j(\beta,z) & = & \lambda_j(\beta)-z_j. \label{eqf}
\end{eqnarray}
Then our convex relaxation reads
\begin{equation} 
	\begin{array}{ll}
		\displaystyle\min_{\beta\in\mathbb{R}^{n_v n},z\in\mathbb{R}^{n_e}} &\displaystyle\sum\limits_{j=1}^{n_e} z_j  \vspace{0.2cm}\\ 
		\mathrm{subject\,\, to} & \varphi_i(z)=0, \,\,i=1,\dots, n_v, \vspace{0.2cm} \\ 
		 & f_j(\beta,z)\leq 0, \,\, j=1,\dots,n_e.
	\end{array}
	\label{eqConvexRelaxation} 
\end{equation}
This problem can be interpreted as enlarging the feasible set by Euclidean representations of ``relaxed'' LED trees. These trees can be imagined as Euclidean LED trees with curved edges (Fig. \ref{FigRelaxedTree}). The variable $z_j$ represents the length of the $j$-th (curved) edge and it is always greater or equal to the distance of its endpoints.

\begin{figure}[h]
	\centering
	\includegraphics[width=0.35\textwidth]{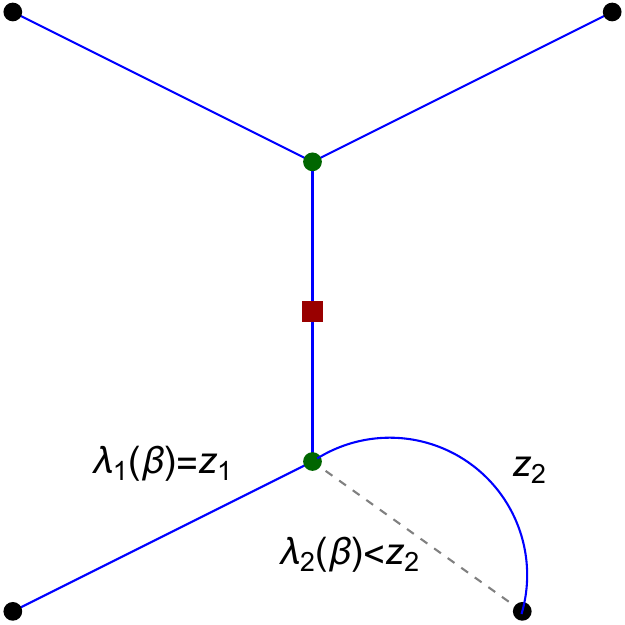} 
	\caption{A relaxed LED tree with one curved edge. The Euclidean representations of this type of trees define the feasible set of the convex relaxation (\ref{eqConvexRelaxation}).}
	\label{FigRelaxedTree}
\end{figure}

Denoting $\varphi(z)=(\varphi_1(z),\dots,\varphi_{n_v}(z))^T$ and $f(\beta,z)=(f_1(\beta,z),\dots,f_{n_e}(\beta,z))^T$, the Lagrange function $\mathcal{L}\colon\mathbb{R}^{n_v n}\times\mathbb{R}^{n_e}\times\mathbb{R}^{n_v}\times\mathbb{R}^{n_e}\rightarrow\mathbb{R}$ corresponding to this problem is defined as
\[ \mathcal{L}(\beta,z,x,y)=\sum\limits_{j=1}^{n_e} z_j+x^T\varphi(z)+y^Tf(\beta,z). \]
At a point $\beta$ where $\lambda_j(\beta)\neq 0$, $j=1,\dots,n_e$, the function $\mathcal{L}$ is differentiable with respect to all variables and we can formulate the corresponding Karush-Kuhn-Tucker (KKT) optimality conditions. Let $\beta_i$ denote the $i$-th $n$-tuple of $\beta$. Then the conditions have the form (for an explanation, see e.g. \cite[p. 267]{Boyd})
\begin{subequations}\label{eqKT}
	\begin{align}
		& \nabla_{\beta_i}\mathcal{L}=0, \,\, i=1,\dots n_v, \quad \frac{\partial\mathcal{L}}{\partial z_j}=0, \,\, j=1,\dots n_e, \label{eqKT1} \\
		& \varphi(z)=0, \label{eqKT4} \vspace{0.1cm}\\
		& f_j(\beta,z)\leq 0, \label{eqKT2} \vspace{0.1cm} \\
		& y^T f=0, \label{eqKT3} \vspace{0.1cm} \\
		& y_j\geq 0, \,\, j=1,\dots n_e. \label{eqKT5}
	\end{align}
\end{subequations}
Due to the convexity of the problem (\ref{eqConvexRelaxation}), the KKT conditions form a system of sufficient optimality conditions. Hence, if we find a point $(\tilde{\beta},\tilde{z},\tilde{x},\tilde{y})$ that satisfies the conditions (\ref{eqKT}), then $(\tilde{\beta},\tilde{z})$ is the optimal solution to the convex relaxation (\ref{eqConvexRelaxation}). If, moreover, all components of $\tilde{y}$ are positive, then $\tilde{\beta}$ is the optimal solution to the original problem (\ref{eqOriginalMinProblem}) .

Let us analyze the condition (\ref{eqKT1}) and let us see what exactly it implies. The first equality leads to
\begin{subequations}\label{eqKTy}
	\begin{align}
		\nabla_{\beta_r}\mathcal{L} & = y_{r_L}u_r^L+y_{r_R}u_r^R=0, \label{eqKT11r} \\
		\nabla_{\beta_i}\mathcal{L} & = y_{i_L}u_i^L+y_{i_R}u_i^R+y_{i_U}u_i^U=0,\,\, i=1,\dots, n_v, i\neq r. \label{eqKT11}
	\end{align}
\end{subequations}
As for the second equality, we have 
\[ \frac{\partial\mathcal{L}}{\partial z_j}=1+x^T\frac{\partial\varphi}{\partial z_j}-y_j=0 \]
for $j=1,\dots, n_e$. This can be rewritten as
\begin{equation}
	y_j=1+x^T\frac{\partial\varphi}{\partial z_j}.
	\label{eqKT12}
\end{equation}

The system of equations (\ref{eqKT12}) can be expressed in an equivalent form more suitable for out further reasoning, but we will need a little insight in the structure of the functions $\varphi_i$. From their definition (\ref{eqPhi}), we can see that $\frac{\partial \varphi_i}{\partial z_j}$ is either $0$, $1$, or $-1$. Now let us recall how we originally composed the constraints in the problem (\ref{eqOriginalMinProblem}) -- using the left and the right leaf paths. In both paths, we turn left immediately after the first edge. This means that any edge $e_{i_R}$, $i=1,\dots, n_v$, will appear only in the $i$-th constraint. Moreover, the length of $e_{i_R}$ is always in the role of a subtrahend. Hence the component $z_{i_R}$ will also appear only in $\varphi_i$ and we have $\frac{\partial \varphi_i}{\partial z_{i_R}}=-1$. The corresponding derivatives of the remaining components of $\varphi$ will be zero. This leads to the equality
\begin{equation}
	y_{i_R}=1-x_i.
	\label{eqYi_R}
\end{equation}
Further, we know that the edge $e_{r_L}$ also appears in only one constraint and $\|e_{r_L}\|$ is in the role of a summand. This gives us
\begin{equation}
	y_{r_L}=1+x_r.
	\label{eqYr_L}
\end{equation}
Finally, let us consider the edges $e_{i_L}$ and $e_{i_U}$ for any $i\neq r$. Again, because in each path we always turn left at any vertex, the length of the edge $e_{i_L}$ is included in all constraints that contain $e_{i_U}$ and always with the same sign. In addition, it also appears in the $i$-th constraint, always in the role of a summand. Therefore we have
\[ y_{i_L}=y_{i_U}+x_i. \]
Adding this with (\ref{eqYi_R}), we get
\begin{equation}
	y_{i_L}+y_{i_R}=y_{i_U}+1.
	\label{eqYi_L}
\end{equation}

Now if we summarize (\ref{eqYi_R})--(\ref{eqYi_L}), we obtain a system that is equivalent to (\ref{eqKT12}), namely
\begin{subequations}\label{eqY}
	\begin{align}
		& y_{r_L}+y_{r_R}=2, \label{eqY1} \\
		& y_{i_R}=1-x_i,\,\, i=1\dots, n_v, \label{eqY2} \\
		& y_{i_L}+y_{i_R}=y_{i_U}+1, \,\, i=1,\dots, n_v, \, i\neq r. \label{eqY3}
	\end{align}
\end{subequations}
As we can see, the system (\ref{eqY}) has $n_v+n_e$ unknowns $x_1,\dots, x_{n_v}$, $y_1,\dots,y_{n_e}$ and $2n_v=n_e$ equations just as the system (\ref{eqKT12}).

As a result of all the above reasoning, proving the KKT conditions (\ref{eqKT}) is equivalent to proving (\ref{eqKTy}), (\ref{eqY}) and (\ref{eqKT4})--(\ref{eqKT5}).

% *****************************************************************************

\section{Optimality of stationary points}\label{secOptimalityOfStationaryPoints}

\begin{prop}\label{propOptimalConvex}
	Let $\beta$ be a properly forked stationary point of $\lambda$ on $\Omega(\pazocal{H})$. Then it is an optimal solution to the problem (\ref{eqOriginalMinProblem}).
\end{prop}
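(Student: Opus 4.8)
The plan is to exhibit, for a properly forked stationary point $\beta$, a point $(\tilde\beta,\tilde z,\tilde x,\tilde y)$ satisfying the KKT system, and moreover with all $\tilde y_j>0$, which by the discussion at the end of Section~\ref{secConvexRelaxation} will show that $\tilde\beta=\beta$ solves the original problem~(\ref{eqOriginalMinProblem}). We set $\tilde\beta=\beta$ and $\tilde z_j=\lambda_j(\beta)$, so that $f_j(\tilde\beta,\tilde z)=0$ for every $j$; then the feasibility of $\beta$ in $\Omega(\pazocal{H})$ gives $\varphi_i(\tilde z)=0$, so (\ref{eqKT4}), (\ref{eqKT2}) hold with equality, (\ref{eqKT3}) is automatic, and only (\ref{eqKTy}), (\ref{eqKT5}) and the strict positivity of $\tilde y$ remain. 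Since $\beta$ is regular all edge lengths are positive, so $\mathcal L$ is differentiable there and the KKT conditions are the right object.

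Next I would solve the linear system~(\ref{eqY}) for $\tilde x$ and $\tilde y$. This is a triangular-type recursion along the tree: the right leaf-edge multipliers and the $x_i$'s are tied by $y_{i_R}=1-x_i$, the root equation gives $y_{r_L}+y_{r_R}=2$, and $y_{i_L}+y_{i_R}=y_{i_U}+1$ propagates multipliers from a vertex to its parent. Working from the leaves down toward the root one can read off all $y_j$ in closed form; I expect each $y_j$ to come out as a sum of the form $1$ plus contributions that count, with signs, how the edge $e_j$ sits inside the left/right leaf paths used to build the constraints. The key point is that this purely combinatorial solution is unique and gives explicit values; the genuinely substantive claim is that these values are strictly positive, and that is where the \emph{stationarity} of $\beta$ must be used rather than mere feasibility.

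To get positivity and to verify~(\ref{eqKTy}) I would use the geometric conclusions already proved for properly forked stationary points: by Lemma~\ref{lemma1} $u_r^L=-u_r^R$, by Lemma~\ref{lemma4} the triple $u_i^L,u_i^R,u_i^U$ is linearly dependent (hence coplanar) for $i\neq r$, and by Lemmas~\ref{lemma2},~\ref{lemma3} no two of these three unit vectors are antiparallel. Coplanarity of three pairwise non-antiparallel unit vectors summing (after scaling) to zero forces, by the two-dimensional ``balancing of forces'' argument, that there exist \emph{positive} coefficients with $y_{i_L}u_i^L+y_{i_R}u_i^R+y_{i_U}u_i^U=0$, and at the root $u_r^L=-u_r^R$ makes (\ref{eqKT11r}) hold for any equal pair $y_{r_L}=y_{r_R}$. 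The remaining task is to check that the positive coefficients dictated by this local vector identity at each inner vertex are mutually consistent across the whole tree, i.e.\ that they coincide with the unique solution of~(\ref{eqY}); this follows because the stationarity of $\lambda$ is exactly the statement that the directional derivative $\nabla_\tau\lambda(\beta)=\sum_j(\nabla_\tau\lambda_j)(\beta)$ vanishes on $T_\beta\Omega(\pazocal{H})$, and expanding this over the tangent directions constructed in Section~\ref{secMovingInsideOmega} reproduces the balance equations~(\ref{eqKTy}) with a common normalization.

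The main obstacle I anticipate is precisely this gluing step: turning the local, vertex-by-vertex force-balance (which only pins down the multipliers up to a positive scalar at each fork) into a single global assignment $\tilde y$ that simultaneously satisfies every equation of~(\ref{eqY}) and stays strictly positive everywhere. One has to track how the scaling chosen at a child propagates to the parent through $y_{i_L}+y_{i_R}=y_{i_U}+1$ and confirm no sign is lost along any root path — in particular that $y_{i_U}$ never drops to zero, which is where the non-antiparallel conclusions of Lemmas~\ref{lemma2} and~\ref{lemma3} and the coplanarity of Lemma~\ref{lemma4} are indispensable. Once the consistent positive solution is in hand, the conclusion is immediate: $(\beta,\tilde z,\tilde x,\tilde y)$ satisfies the full KKT system with $\tilde y>0$, so $\beta$ solves~(\ref{eqOriginalMinProblem}).
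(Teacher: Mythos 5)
Your overall strategy is the same as the paper's: certify optimality by exhibiting a KKT point $(\beta,z,x,y)$ for the convex relaxation (\ref{eqConvexRelaxation}) with $z_j=\lambda_j(\beta)$ and all $y_j>0$, using Lemmas \ref{lemma1}--\ref{lemma4} to build the multipliers from the vector balance equations (\ref{eqKTy}). However, there are two genuine gaps. First, your positivity argument rests on a false implication: three coplanar unit vectors, no two of which are antiparallel, need not admit a \emph{positive} vanishing linear combination --- if all three lie in an open half-plane (which Lemmas \ref{lemma2}--\ref{lemma4} by themselves do not exclude), the unique-up-to-scale vanishing combination necessarily has coefficients of mixed signs. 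The paper obtains positivity only after deriving the quantitative relation $\cos\varrho=\frac{y_{i_U}}{1+y_{i_U}}\cos\vartheta$ in (\ref{eqCosPhi}), which forces $\vartheta<\varrho$ and hence that $-u_i^U$ lies in the \emph{interior} of the cone spanned by $u_i^L$ and $u_i^R$, followed by induction on depth starting from $y_{r_L}=y_{r_R}=1$; that relation is itself a consequence of stationarity, not of the local coplanarity and non-antiparallelism lemmas.

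Second, the step you flag as ``the main obstacle'' --- showing that the multipliers obtained by propagating the normalization $y_{r_L}=y_{r_R}=1$ down the tree through the vector identities (\ref{eqKTy}) also satisfy the scalar recursion $y_{i_L}+y_{i_R}=y_{i_U}+1$ of (\ref{eqY3}) --- is precisely the core of the proof, and you do not carry it out. (Note also that the system (\ref{eqY}) alone is underdetermined, with $n_v$ more unknowns than equations, so it cannot simply be ``read off in closed form''; the ratios at each fork are fixed by (\ref{eqKTy}), and (\ref{eqY3}) is then a nontrivial consistency condition.) The paper establishes it by differentiating $\lambda$ along the height-preserving direction $\tau_h(u_i^L+u_i^R)$ of Lemma \ref{lemHP}, telescoping the resulting chain of dot products along the root path of $v_i$ via (\ref{eqSpeedq}) and (\ref{eqKT11}), and reading off (\ref{eqY3}) from $\nabla_\tau\lambda(\beta)=\nabla_\tau\lambda_{i_R}(\beta)+\nabla_\tau\lambda_0(\beta)=0$. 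Without this computation your certificate is unverified, and without the relation it produces your positivity claim has no support.
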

\begin{proof}
	Let us set $z=(\lambda_1(\beta),\dots,\lambda_{n_e}(\beta))$. We will prove our statement by showing that there is $y\in\mathbb{R}^{n_e}$ and $x\in\mathbb{R}^{n_v}$ such that $(\beta,z,y,x)$ satisfies the KKT conditions (\ref{eqKT}) and that $y_j>0$ for any $j=1,\dots,n_e$.
	
	Setting $z$ as above, the conditions (\ref{eqKT4})--(\ref{eqKT3}) are trivially satisfied. Therefore it is enough to prove (\ref{eqKT1}), resp. (\ref{eqKTy}) and (\ref{eqY}), and (\ref{eqKT5}).
	
	Lemma \ref{lemma4} says that for $i\neq r$, the edges $e_{i_L}$, $e_{i_R}$ and $e_{i_U}$ lie in the same plane and no two of them are parallel. Therefore there are three non-zero values $y_{i_L}$, $y_{i_R}$ and $y_{i_U}$ such that 
	\begin{equation}
		 y_{i_L}u_i^L+y_{i_R}u_i^R+y_{i_U}u_i^U=0.
		 \label{eqLinCombY}
	\end{equation}
	Further, Lemma \ref{lemma1} claims that $u_r^L=-u_r^R$, which implies
	\begin{equation}
		y_{r_L}u_r^L+y_{r_R}u_r^R=0, 
		\label{eqLinCombRootY}
	\end{equation}
	where $y_{r_L}=y_{r_R}$ is an arbitrary non-zero value. Fixing this value and assuming that $v_i$ and $v_j$ are the children of the root, we can set $y_{i_U}=y_{r_L}$ and $y_{j_U}=y_{r_L}$ in (\ref{eqLinCombY}), which then uniquely determines the values $y_{i_L}$, $y_{i_R}$, $y_{j_L}$ and $y_{j_R}$. Proceeding analogously to the children of $v_i$ and $v_j$ and eventually to all vertices in the tree, we get a unique value $y_j$ corresponding to each edge $e_j$, $j=1,\dots, n_e$. 
	
	In our case, we need to satisfy the equality (\ref{eqY1}). Therefore we set $y_{r_L}=y_{r_R}=1$ and compute the remaining values $y_j$ based on this choice. The equations (\ref{eqLinCombY}) and (\ref{eqLinCombRootY}) are identical to the equations (\ref{eqKTy}) and thus the vector $y=(y_1,\dots, y_{n_e})$ might be what we need for satisfying the KKT conditions.

	The next thing that we will show is that $y=(y_1,\dots,y_{n_e})$ satisfies the equations (\ref{eqY3}). Let us consider any inner non-root vertex $v_i$ and the direction $\tau=\tau_h(w_i)$ (introduced in Sec. \ref{secStationaryPoints}) where $w_i=u_i^L+u_i^R$. Using the same reasoning as in the proof of Lemma \ref{lemma3} and the same notation as there and as in Section \ref{secMovingInsideOmega}, we find that (\ref{eqAlphaijUij}) and (\ref{eqLambdacr}) are true also in this case. Since this time $\nabla_{\tau} \lambda_{i_R}(\beta)\neq 0$, the equality (\ref{eqNablaLambda}) will change to
	\[ \nabla_{\tau}\lambda(\beta)=\nabla_{\tau}\lambda_{i_R}(\beta)+\nabla_{\tau}\lambda_0(\beta)=0. \]
	This means that
	\begin{equation}
		q_{i_{m}}\alpha_{i_{m}}\cdot u_{i_{m}}^U+w_i\cdot u_i^R=0.
		\label{eqNablaLambda0}
	\end{equation}
	Since $\tau$ is a height preserving direction, we have $\alpha_{i_k}\perp u_{i_k}^-$. Therefore (\ref{eqSpeedq}) reduces to
	\begin{equation}
		q_{i_k}=\frac{q_{i_{k-1}}\alpha_{i_{k-1}}\cdot u_{i_{k-1}}^U}{-\alpha_{i_k}\cdot u_{i_k}^+}=\frac{q_{i_{k-1}}\alpha_{i_{k-1}}\cdot u_{i_{k-1}}^U}{\alpha_{i_k}\cdot u_{i_{k-1}}^U}. 
		\label{eqSpeedqHP-r}
	\end{equation}
	Now let $y_{i_k}^+$ and $y_{i_k}^-$ denote the components of $y$ corresponding to the edges $e_{i_k}^+$ and $e_{i_k}^-$. Using (\ref{eqSpeedqHP-r}) and (\ref{eqKT11}), we get
	\begin{eqnarray*}
		q_{i_{m}}\alpha_{i_{m}}\cdot u_{i_{m}}^U & = & q_{i_{m}}\alpha_{i_{m}}\cdot (-y_{i_{m}}^+u_{i_{m}}^+-y_{i_{m}}^-u_{i_{m}}^-) 
		= - q_{i_{m}}\alpha_{i_{m}}\cdot(y_{i_{m}}^+ u_{i_{m}}^+) \\
		& = &  q_{i_{m}}\alpha_{i_{m}}\cdot (y_{i_{m-1}}^U u_{i_{m-1}}^U)  =   q_{i_{m-1}}\alpha_{i_{m-1}}\cdot (y_{i_{m-1}}^U u_{i_{m-1}}^U) \\
		& = &  q_{i_{m-1}}\alpha_{i_{m-1}}\cdot (-y_{i_{m-1}}^+u_{i_{m-1}}^+-y_{i_{m-1}}^-u_{i_{m-1}}^-) \\
		& = & \dots \\
		& = &  q_{i_{m-2}}\alpha_{i_{m-2}}\cdot (y_{i_{m-2}}^U u_{i_{m-2}}^U) \\
		& = & \dots \\
		& = &  q_{i_1}\alpha_{i_1}\cdot (y_{i_1}^U u_{i_1}^U).
	\end{eqnarray*}
	Finally, using $y_{i_1}^U u_{i_1}^U=-y_{i_1}^+u_{i_1}^+-y_{i_1}^-u_{i_1}^-$ and applying the non-reduced relation (\ref{eqSpeedq}), we get
	\[ q_{i_{m-1}}\alpha_{i_{m-1}}\cdot u_{i_{m-1}}^U = y_{i_U}w_i\cdot (u_i^R+u_i^U). \]
	Substituting this into (\ref{eqNablaLambda0}), we have
	\[ y_{i_U}w_i\cdot (u_i^R+u_i^U)+w_i\cdot u_i^R=0. \]
	This leads to
	\begin{equation}
		(y_{i_U}+1)w_i\cdot u_i^R = w_i\cdot (-y_{i_U}u_i^U)=w_i\cdot(y_{i_L}u_i^L+y_{i_R}u_i^R). 
		\label{eqYwu}
	\end{equation}
	Since $w_i\cdot u_i^L=w_i\cdot u_i^R$, we have
	\[  (y_{i_U}+1)w_i\cdot u_i^R = (y_{i_L}+y_{i_R})w_i\cdot u_i^R. \]
	According to lemma \ref{lemma3}, we have $u_i^L\neq-u_i^R$ and therefore $w_i\not\perp u_i^R$, which means $w_i\cdot u_i^R\neq 0$. This implies
	\[ y_{i_L}+y_{i_R}=y_{i_U}+1, \]
	which is exactly (\ref{eqY3}). 
	
	In order to fully satisfy the conditions (\ref{eqY}), we still need to find a suitable vector $x$. But this is trivial, since the components of $x$ appear only in (\ref{eqY2}) and can be obtained directly from there.
	
	The last thing to examine is the sign of the values $y_j$, $j=1,\dots,n_e$. So far we know just that they are non-zero, however, we need them to be positive. The proof of the positivity can be done by induction on the depth of the vertex $v_i$. Obviously, $y_{r_L}$ and $y_{r_R}$ (both equal to 1) are positive. So let us have any non-root vertex $v_i$ and let us assume that $y_{i_U}>0$. We want to show that $y_{i_L}$ and $y_{i_R}$ are then also positive.  
	
	Let $\varrho$ be the angle between $w_i$ and $u_i^R$ and $\vartheta$ the angle between $w_i$ and $-u_i^U$ (an illustration is shown in Figure \ref{FigPositiveY}). The first equality in (\ref{eqYwu}) can then be rewritten as
	\[ (1+y_{i_U})\left\|w_i\right\|\cos\varrho=y_{i_U}\left\|w_i\right\|\cos\vartheta, \]
	which implies
	\begin{equation}
		\cos\varrho=\frac{y_{i_U}}{1+y_{i_U}}\cos\vartheta. 
		\label{eqCosPhi}
	\end{equation}
	In general, the angle $\varrho$ is from the interval $\left[0,\frac{\pi}{2}\right]$. But for a properly forked stationary point, it cannot be zero and, according to Lemma \ref{lemma3}, it cannot be $\frac{\pi}{2}$ either. This means $\cos\varrho>0$. Since $y_{i_U}>0$, the relation (\ref{eqCosPhi}) implies that also $\cos\vartheta>0$. It also says that $\cos\varrho<\cos\vartheta$. This means that $\vartheta<\varrho$. This, in turn, implies that the coordinates of $-u_i^U$ in the basis $\{u_i^L,u_i^R\}$ have to be positive. As a consequence, both $y_{i_L}$ and $y_{i_R}$ must be positive.
	
	To conclude, we have found a quadruple $(\beta,z,x,y)$ that satisfies the KKT conditions (\ref{eqKT}). This means that $(\beta,z)$ is an optimal solution to the convex relaxation (\ref{eqConvexRelaxation}). And since the components of $y$ are positive, $\beta$ is an optimal solution to the original problem (\ref{eqOriginalMinProblem}).

\end{proof}

\begin{figure}[h]
	\centering
	\includegraphics[width=0.45\textwidth]{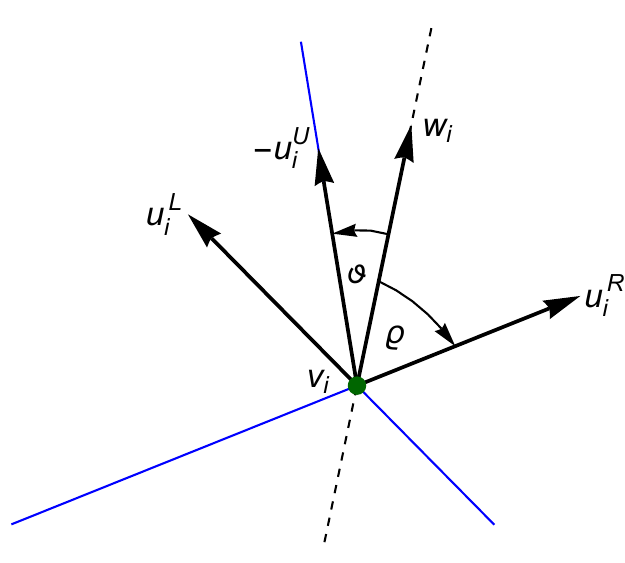} 
	\caption{An illustration to the proof that $y_j>0$, $j=1,\dots,n_e$. We find that $\vartheta<\varrho$ and thus $-u_i^U$ must have positive coordinates in the basis $\{u_i^L,u_i^R\}$.}
	\label{FigPositiveY}
\end{figure}

\begin{rem}
	Proposition \ref{propOptimalConvex} holds true also for stationary points that are not properly forked, i.e. when $u_k^L=u_k^U$ or $u_k^R=u_k^U$ for some inner vertex $v_k$. The proof can be done by a similar approach; the only problem is that the claims of Lemmas \ref{lemma2} and \ref{lemma3} need not be true and we can end up with $u_i^L$, $u_i^R$ and $u_i^U$ being all parallel. In that case, the height preserving direction $\tau_h(w_i)$ does not exist for any vertex $v_i$ whose root path contains $v_k$. Instead, we can differentiate $\lambda$ with respect to the direction constructed just as $\tau_h(w_i)$, with the only difference that $\alpha_k=u_k^-$ (and not $\alpha_k\perp u_k^-$). Otherwise the proof does not bring much new and is technically a bit tedious, so we do not show it here.
\end{rem}

\begin{prop}
	If $\lambda$ has a stationary point on $\Omega(\pazocal{H})$, then it is unique.
\end{prop}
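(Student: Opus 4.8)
The plan is to combine the optimality statement for stationary points (Proposition~\ref{propOptimalConvex} together with the remark following it) with the convex relaxation, first to show that two stationary points must be joined by a whole segment of stationary points along which \emph{the edge directions never change}, and then to kill that segment by an induction on the number of leaves that uses the tree structure.

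\textit{Step 1: two stationary points span a ``frozen'' segment.} Suppose $\beta$ and $\beta'$ are stationary points. By Proposition~\ref{propOptimalConvex} and its remark both are optimal for (\ref{eqOriginalMinProblem}), so $\lambda(\beta)=\lambda(\beta')=:p^{*}$; setting $z=(\lambda_1(\beta),\dots,\lambda_{n_e}(\beta))$ and $z'=(\lambda_1(\beta'),\dots,\lambda_{n_e}(\beta'))$, both $(\beta,z)$ and $(\beta',z')$ are optimal for the relaxation (\ref{eqConvexRelaxation}), whose optimal value is therefore $p^{*}$. Since (\ref{eqConvexRelaxation}) is convex, its optimal set is convex, so $(\beta_t,z_t):=((1-t)\beta+t\beta',(1-t)z+tz')$ is optimal for every $t\in[0,1]$. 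Feeding in the KKT multiplier $(x,y)$ built for $\beta$ in the proof of Proposition~\ref{propOptimalConvex}, which is dual-optimal, strong duality gives complementary slackness at every primal optimum, and since $y_j>0$ this forces $f_j(\beta_t,z_t)=0$, i.e. $\lambda_j(\beta_t)=(1-t)\lambda_j(\beta)+t\lambda_j(\beta')$ for all $j,t$. Hence: (a) as $\varphi_i(z_t)=0$ and $z_t=(\lambda_j(\beta_t))_j$, the constraints (\ref{eqOriginalMinProblem}) hold at $\beta_t$, so the whole segment lies in $\Omega(\pazocal{H})$ and $\lambda\equiv p^{*}$ on it; (b) each $\lambda_j$ is affine along the segment, and a norm of an affine function of $t$ is affine only when its two endpoint vectors are positively proportional, so edge $j$ of $\Psi_\beta$ and of $\Psi_{\beta'}$ have the same unit direction. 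Thus all edge directions are frozen along the segment, the lengths interpolate affinely and stay positive, and $u_i^L\neq u_i^R$ persists, so every $\beta_t$ is a regular global minimizer, hence a stationary point.

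\textit{Step 2: collapsing the segment.} Convexity alone permits such a segment, so this step must use the tree. I would prove, by induction on $n_l$, the stronger claim: for any positive weights $a_j$ and any vector $d$ with $\lVert d\rVert$ below the bound dictated by the $a_j$, the convex functional $\mu(\Psi)=\sum_j a_j\lambda_j(\Psi)+\langle d,v_r\rangle$ has at most one stationary point on $\Omega(\pazocal{H})$; the Proposition is $a\equiv 1$, $d=0$. The base case $n_l=2$ is immediate: $\Omega(\pazocal{H})$ is the bisector hyperplane of the two fixed leaves, and $\mu$ restricted to it is coercive and in fact \emph{strictly} convex -- if it were constant on a segment, each $\lambda_j$ would be affine there, forcing the segment onto the line through both leaves, which meets the bisector in a single point -- so there is at most one minimizer, hence at most one stationary point. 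For the inductive step, let $v_1,v_2$ be the root's children with subtrees $\Psi_1,\Psi_2$. The key computation is that, because the root of a stationary point sits at the critical position on its hyperboloid sheet $H_r$ (the analogue of Lemma~\ref{lemma1} for $\mu$), perturbing $\Psi_i$ while keeping the tree LED changes the two root-edge lengths by an amount depending only on the subtree's root position and on its height $h_i$, both affine in the subtree's edge lengths; hence $\Psi_i$ is a stationary point over $\Omega_i(\pazocal{H}_i)$ of an induced functional of exactly the same form $\sum_j a_j^{(i)}\lambda_j(\cdot)+\langle d^{(i)},v_i\rangle$ with positive weights. Applying Step~1 to $\mu$ gives $\Psi$ and $\Psi'$ the same edge directions, so the induced functionals for the two trees coincide, and the induction hypothesis yields $\Psi_1=\Psi_1'$, $\Psi_2=\Psi_2'$. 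Then $H_r=H_r'$, both roots are global minima of $\mu$ on $H_r$, and $\mu|_{H_r}$ is strictly convex -- constancy on a chord would again force that chord onto the line through $v_1$ and $v_2$, which meets the sheet in one point -- so $v_r=v_r'$ and $\Psi=\Psi'$.

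The hard part is Step~2: transferring the machinery of Sections~\ref{secStationaryPoints}--\ref{secOptimalityOfStationaryPoints} -- the optimality of stationary points and, above all, the positivity $y_j>0$ of the multipliers, which Step~1 needs -- to the wider family of objectives $\sum_j a_j\lambda_j+\langle d,v_r\rangle$, and controlling the induced weights $a^{(i)}$ and shift vectors $d^{(i)}$ so that they remain admissible down the recursion. The relaxation argument of Step~1 and the two strict-convexity lemmas (on the bisector hyperplane and on the hyperboloid sheet) are then routine.
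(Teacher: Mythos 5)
Your Step 1 is essentially the paper's opening move: Proposition \ref{propOptimalConvex} makes every stationary point optimal for the convex relaxation (\ref{eqConvexRelaxation}), so any two stationary points span a segment of optima lying in $\Omega(\pazocal{H})$ along which $\lambda$ is constant and hence each convex summand $\lambda_j$ is affine. (Your extra observation that the edge directions are frozen along the segment is correct but turns out not to be needed.) The genuine gap is Step 2, which, as you yourself concede, is a programme rather than a proof. Extending Lemmas \ref{lemma1}--\ref{lemma4} and Proposition \ref{propOptimalConvex} --- above all the positivity of the multipliers $y_j$ --- to the whole family of objectives $\sum_j a_j\lambda_j+\langle d,v_r\rangle$, and verifying that the functional induced on each subtree really has that form with admissible $(a^{(i)},d^{(i)})$ all the way down the recursion, is precisely the content you would need and do not supply; none of it is routine, since establishing those facts for the single objective $\lambda$ is what the paper's entire Sections \ref{secConvexRelaxation} and \ref{secOptimalityOfStationaryPoints} are devoted to. Your base case and the ``strict convexity on the sheet'' step are also stated loosely ($H_r$ is not convex, so one has to say what strict convexity on a chord means there).

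The paper closes the argument in a few lines instead, and the idea you are missing is to anchor at the fixed leaves. If $A\neq B$ are two stationary points, the trees $\Psi_A$ and $\Psi_B$ have the same leaves but cannot have all vertices equal, so there is an index $k$ with $v_k(A)=v_k(B)$ but $v_k^U(A)\neq v_k^U(B)$. Along the segment $AB$ the function $\lambda_{k_U}$ is then the distance from the fixed point $v_k$ to a point running over a nondegenerate segment, which the paper argues is strictly convex; since constancy of $\lambda=\sum_j\lambda_j$ forces every convex summand to be affine on $AB$, this is a contradiction. No induction, no generalized objective, and no second pass through the KKT machinery is required --- your Step 1 plus this one combinatorial observation about a vertex whose position agrees while its parent's does not would have finished the proof.
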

\begin{proof}
	According to Proposition \ref{propOptimalConvex}, any stationary point of $\lambda$ on $\Omega(\pazocal{H})$ defines an optimal solution to a convex problem. Then the set of all stationary points must be convex. Let us assume that it contains two different points $A$, $B$. Then it also contains the line segment $AB$ and obviously the value of $\lambda$ is constant along $AB$. 
	
	Let $v_i(A)$ and $v_i(B)$, $i=1,\dots,n_t$, denote the $i$-th vertices of the trees $\Psi_A$ and $\Psi_B$ and let $v_i^U(A)$ and $v_i^U(B)$ be their parents. Let $k$ be such that $v_k(A)=v_k(B)$ but $v_k^U(A)\neq v_k^U(B)$; the existence of such $k$ is guaranteed since $\Psi_A$ and $\Psi_B$ have the same leaves, but they cannot have all vertices equal.   
	
Now recall that $\lambda$ is the sum of convex functions $\lambda_j$, $j=1,\dots,n_e$. From the choice of $k$ follows that the function $\lambda_{k_U}$, which is now just the distance function from a fixed point, must be strictly convex on $AB$. But this means that $\lambda$ must be strictly convex on $AB$ as well, which contradicts the assumption that the value of $\lambda$ is constant along $AB$. Hence the set of optimal solutions cannot contain two different points.

\end{proof}

\begin{rem}\label{remNoStationaryPoint}
	The existence of a stationary point of $\lambda$ on $\Omega(\pazocal{H})$ is not guaranteed. As we already know, the feasible set can be empty; but even when it is not, there need not be a stationary point. Some simple examples are shown in Fig. \ref{FigNoStationaryPoint}. On the first and the fourth picture, we can see length minimizers that correspond to stationray point. However, as we see in the rest of the pictures, if we move the leaf $A$ closer to the other leaves, at some point the length minimizer will stop being regular. The existence of a stationary point is an open problem so far.
\end{rem}

\begin{figure}[h]
	\centering
	\includegraphics[width=0.19\textwidth]{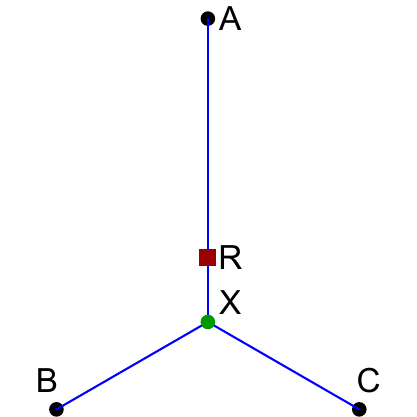}
	\includegraphics[width=0.19\textwidth]{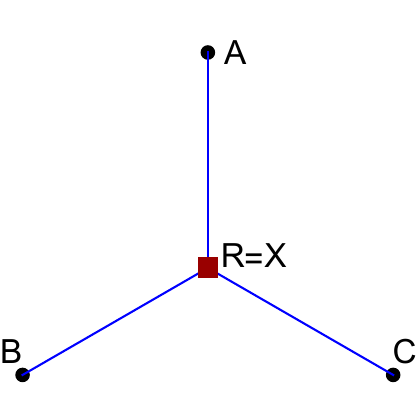}
	\includegraphics[width=0.19\textwidth]{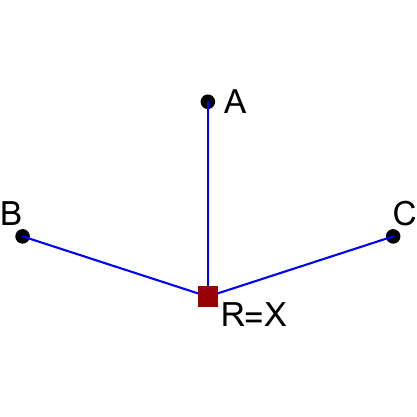} 
	\includegraphics[width=0.19\textwidth]{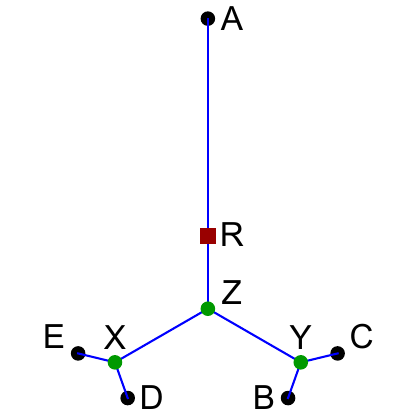} 
	\includegraphics[width=0.19\textwidth]{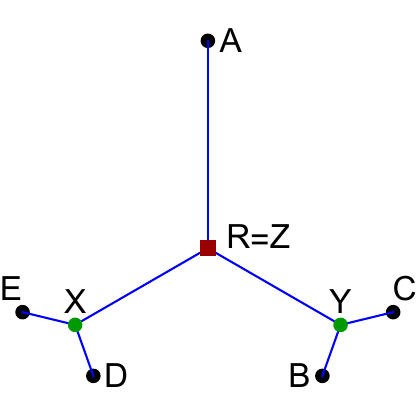} 
	\caption{On the first and the fourth picture, we can see length-minimizing LED trees that correspond to a stationary point. In the rest of the pictures, this is not the case and in fact, there is no stationary point of $\lambda$ on $\Omega(\pazocal{H})$.}
	\label{FigNoStationaryPoint}
\end{figure}

% *****************************************************************************

\section{Comparison of the length-minimizing LED tree with the Euclidean Steiner tree}\label{secComparisonLEDSteiner}

As we have already mentioned, if we have the same hanging type, the Euclidean Steiner tree and the length-minimizing LED tree can be equal, but in most cases they are not. However, there are some similarities and we add this short section to summarize them.

First of all, for a given hanging type, the Steiner tree problem is an unconstrained convex optimization problem and there is always one stationary point of the objective function. For the length-minimizing LED tree, we have a constrained non-convex problem and the stationary point does not have to exist, but if it does, it is unique as well.

Further, Lemmas \ref{lemma2}--\ref{lemma4} hold true not only for the length-minimizing LED tree but also for the Steiner tree. For the Steiner tree, there is one more well known geometrical property -- the angle between any pair from the tripple ($u_i^L$, $u_i^R$, $u_i^U$) is equal to $\frac{2\pi}{3}$. For a length-minimizing LED tree, this is not true anymore, but there are still some things that can be said. For example, let us suppose that $v_i$ is a child of the root. Then $y_{i_U}=1$ and from (\ref{eqCosPhi}) follows that
\[ \cos\varrho\leq \frac{1}{2}, \]
where $2\varrho$ is the angle between $u_i^L$ and $u_i^R$. This means that this angle is greater or equal $\frac{2\pi}{3}$. For the rest of the angles, we can obtain lower bounds that are less than $\frac{2\pi}{3}$, but this will be elaborated in a future work. 

From what was said above, it is also clear that in the Steiner tree, the vector $u_i^U$ is always parallel with $u_i^L+u_i^R$, namely
\[ -u_i^U=u_i^{LR}=\frac{u_i^L+u_i^R}{\|u_i^L+u_i^R\|}. \]
For the length-minimizing LED tree, this is not true, but $-u_i^U$ also cannot be bent away from $u_i^{LR}$ arbitrarily much. As we have seen in the proof of Proposition \ref{propOptimalConvex} (the equality (\ref{eqCosPhi}) and thereafter), in a properly forked length minimizer, the vector $-u_i^U$ lies in the relative interior of the cone defined by $u_i^L$ and $u_i^R$. In other words, if the angle between $u_i^L$ (or $u_i^R$) and $u_i^{LR}$ is $\varrho$, then the angle between $-u_i^U$ and $u_i^{LR}$ is less than $\varrho$. If the length-minimizing tree is not properly forked, then we have $\varrho=\vartheta=\frac{\pi}{2}$.

% *****************************************************************************

\section{Experiments}\label{secExperiments}

Since the concept of a length-minimizing LED tree emerged from a practical problem of evolution of language families, we complete the presentation of our work by showing some illustrative experiments. Of course, if one hopes to approach the reality, modeling the evolution of languages is a very complex task. It is also demanding when it comes to gathering the input data and deciding what they even should be. Therefore, the first thing to say is that the experiments presented here are not meant to answer the big questions yet, but are some of the first stage experiments that we performed in order to get acquainted with the length-minimizing LED trees, examine their potential in construction of chronograms and identify some further issues that should be tackled on the theoretical and practical level. The results that we present were obtained under some significant simplifications and we also limited ourselves to a rather small group of 18 Indo-European languages. Among these languages, there are 11 from the Slavic, 2 from the Baltic and 5 from the Romance family. 

\subsection{The main question}

One of the main questions of historical linguistics is to reconstruct the ancestors of the currently existing languages and to estimate when and where they were spoken. These problems caught the attention of scientist from different fields and mathematicians also came up with their own methods based on a number of different approaches (for example, Bouckaert et al. \cite{BA}, Chang et al. \cite{Chang}, Dyen et al. \cite{Dyen}, Gray and Atkinson \cite{GA}, Kassian et al. \cite{Kass}, Petroni and Serva \cite{PS}). The results obtained by different methods vary and therefore it is always interesting to bring in another one and see what it has to say. The approach based on LED trees could be potentially useful since it is quite easy to implement, not very time consuming and it models the actual evolution of languages in the feature space.   

\subsection{Placing the languages in a Euclidean space}\label{secFeatureSpace}

The first step in the procedure is choosing the feature space where the languages will be situated. In our case, for testing purposes, we chose a very simple approach, which is based on the Swadesh list. The Swadesh list \cite{Sw1,Sw2} is a list of 207 meanings that is often used by computational linguists for evaluation of differences between language. The meanings are chosen so that they belong to a very basic vocabulary of any language, including ancient languages (e.g. fire, sun, eye, water, soil etc.). The Swadesh list is usually used together with a cognate database -- the meanings are translated in each of the examined languages and if two translations of a given meanings are cognates (i.e. they have the same etymology), they are marked as equal. In our simplified case, we used just one translation per meaning and language. The cognate database was created with the help of the Swadesh lists on Wikipedia \cite{SwWiki}, various standard and etymological dictionaries (especially Slovak \cite{Kralik}, Czech \cite{Machek}, Italian \cite{ItaEty}, Lithuanian \cite{LitEty} and Russian \cite{RusEty}), and the database of Dunn \cite{Dunn,DunnLink}. We illustrate the procedure of setting the feature space coordinates on a simple example with only three languages and three meanings as shown in Table \ref{tabCognates}. 

The first step is to count all cognate groups across all given languages. In our example, the meaning ``night'' is said differently in each of the languages, but actually all three words are quite similar and have the same etymology. This makes one cognate group. On the other hand, for the meaning ``sky'', we have three completely unrelated words, which gives us three cognate groups. Finally, the meaning ``fire'' makes two cognate groups -- the Slovak and Lithuanian translations have the same etymology, while the Italian translation is not related to them. Altogether, we have six cognate groups and the feature space will be a six dimensional Euclidean space, where each coordinate represents one cognate group. In the basic setting, the $i$-th coordinate of a language is set to 1, if it has a word from the $i$-th cognate group. Otherwise, it is set  to 0. The resulting coordinates of our three languages are shown in the last column of Table \ref{tabCognates}.

This setting of coordinates is easy and straightforward, but it brings along a redundancy of coordinates. The resulting space has as many dimensions as there are cognate groups. However, if we have $n_l$ languages, the representing points will always define an $(n_l-1)$-dimensional simplex. Therefore, after the initial setting, we construct an isometric simplex in the $(n_l-1)$-dimensional Euclidean space and further computations are made with its vertices.  

\begin{table}
	\caption{An example of setting the feature space coordinates of three languages based on three meanings from the Swadesh list. }
	\label{tabCognates}
	\centering
	\begin{tabular}{l*{4}{l}}
		\toprule
		language              & night & sky & fire & coordinates \\
		\midrule
		Slovak 	& noc  & obloha & ohe\v n & (1 100 10)   \\
		Italian	& notte & cielo & fuoco & (1 010 01)   \\
		Lithuanian	& naktis  & dangus & ugnis & (1 001 10) \\
		\botrule
	\end{tabular}
\end{table}

\subsection{Determining the hanging type of the LED tree}\label{secTopology}

Having placed the languages in the feature space, we can proceed to determining a probable hanging type of the corresponding LED tree. This is done by a very simple iterative algorithm that combines smaller LED trees into larger ones. At the beginning, we take all languages in our batch and consider them to be LED trees with one leaf. Then, in each iteration,
\begin{enumerate}
	\item{we find the two closest roots of all current LED trees,}
	\item{we join these roots by a straight line and thus obtain a new (unrooted) tree,}
	\item{we insert a root in this tree so that we get a LED tree.}
\end{enumerate}
This procedure is repeated until we end up with a single LED tree. Since all inner vertices of this tree lie on straight segments connecting their children, we will calle this tree a {\it stretched} tree.

\subsection{Practical aspects of the computation and results}

Finally, we present several experiments and provide some observations and details as well as possible modifications of the basic procedure. In all presented experiments, we solved directly the original problem (\ref{eqOriginalMinProblem}). Relying on the fact that there is at most one stationary point, we simply used the FindMinimum function of Wolfram Mathematica that uses an interior point method. The initial approximation was the stretched tree used for the hanging type estimation. The CPU time was always in order of seconds.

One issue that we had to deal with is that the simple procedure proposed in Sec. \ref{secTopology} can fail for certain configurations of languages and it might be necessary to try several most probable topologies (i.e. not to always take the two closest tree roots, but sometimes the second or third closest). The conditions for existence of the stretched tree are yet to be examined; a comment on this will be made below.

In the first experiment, we took just 8 of the 18 languages. The stretched tree was constructed without any problems and yielded the hanging type that is illustrated in the picture. There was one stationary point and a 2D representation of the resulting tree is shown in Fig. \ref{FigSlavicBalticRomanSelect}. We also tried to use this tree to make some estimates of the splitting time of the individual language families. We made an assumption that the Romance languages split after the end of the Roman Empire, approximately 1550 years ago, and we computed the other splitting times based on this information and the heights of the corresponding inner vertices. The numbers that we obtained are shown in the picture next to the corresponding inner vertices. 

In the second experiment, we used all 18 languages. This time we had to make one adjustment in the hanging type -- based on the Swadesh list, the algorithm from Sec. \ref{secTopology} evaluated the common ancestor of Bulgarian, Macedonian and Serbian as a little closer to the central and east Slavic languagest than to Slovenian. For this setting, it was not possible to construct a corresponding stretched tree. Using the second most probable choice -- joining Slovenian with Bulgarian, Macedonian and Serbian -- yielded a regular stretched tree. However, the interior point method did not find any stationary point.

Analyzing the situation and recalling the examples from Fig. \ref{FigNoStationaryPoint} as well as other similar ones, we came to the conclusion that the Swadesh list alone does not create enough distance between different language families and subfamilies. Therefore we tried another setting of the feature space coordinates -- we assumed that the languages that differ in meanings that create just a few cognate groups should be situated further from each other than the ones that differ in meanings that are translated differently in almost every language. Specifically, instead of using just the values 0 and 1 as the coordinate values, we used 0 and $(c_{max}-c+1)^4$ where $c_{max}$ is the maximum number of cognate groups that one meaning generated for the 18 languages (it was the meaning ``dirty'' with 12 cognate groups) and $c$ is the number of cognate groups of the current word used for setting the coordinates. As we can see, the minimum non-zero coordinate will be 1 as before, but it can have also much larger values. Using this setting and the same hanging type as in the previous case, the stationary point was found and the resulting tree is depicted in Fig. \ref{FigSlavicBalticRoman}. Moreover, we also found out that the same adjustment reduces problems with finding the stretched tree. 

Finally, let us make a comment about the splitting times that we obtained. The estimates from other sources say that the Slavic languages split approximately 2200--1300 years before present, Latvian and Lithuanian 2200--1400 YBP, Slavic and Baltic languages 3800--2500 YBP and the common ancestor of Romance and Balto-Slavic languages existed approximately 4500--5700 YBP \cite{BA,Chang,GA,Kush,Nov,PS}. The length-minimizing LED tree should provide some kind of lower estimate of the splitting time, which agrees with the numbers we obtained for the splitting of Slavic, Baltic and Balto-Slavic language families, especially in the tree using all 18 languages. However, the common ancestor of the Romance and Balto-Slavic languages is dated later compared with the usual estimates. Even though our estimates are supposed to be on the lower side, we think that we would get a different number using a more elaborated feature space than the one based on Swadesh list with just one translation per meaning. For example, in the current setting, Latvian differs in 140 meanings from Macedonian and in 166 meanings from Portuguese. This is only 26 more different meanings out of 207 and it does not create enough space for a longer evolution from the common ancestor, even when using the adjustment presented in the previous paragraph. Combining this finding with the ones from the previous paragraphs, we can conclude that the method based on LED trees has a potential to produce reasonable results, however, some more complex criteria will have to be employed. For example, we could take into account synonyms, grammar features or actual resemblance of the words instead of/in addition to just etymology (for example Italian ``uovo'' and Serbian ``jaje'', both meaning ``egg'', come from the same Proto-Indo-European word, as well as the word ``egg'' itself, yet they all look completely different). Designing a suitable feature space will be one of the objects of our future practical work. 

\begin{figure}[h]
	\centering
	\includegraphics[width=0.45\textwidth]{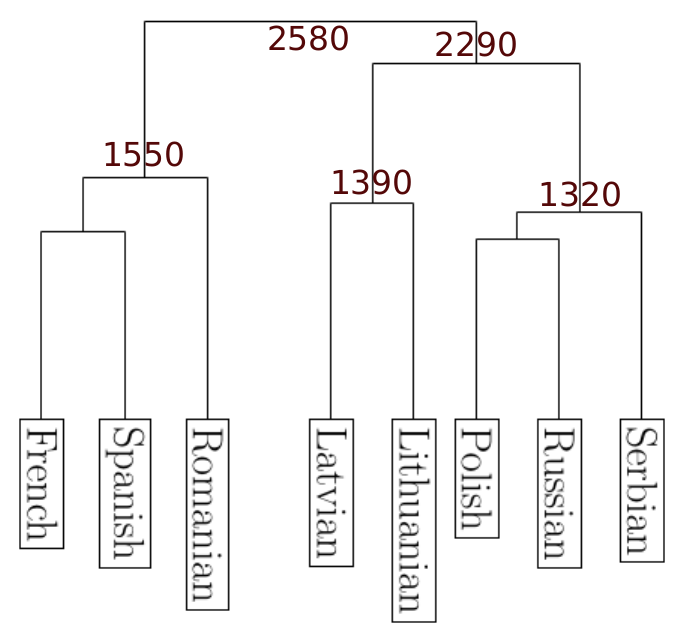} 
	\caption{A 2D chronogram corresponding to 8 selected Indo-European languages. The displayed splitting dates are in years before present.}
	\label{FigSlavicBalticRomanSelect}
\end{figure}

\begin{figure}[h]
	\centering
	\includegraphics[width=0.99\textwidth]{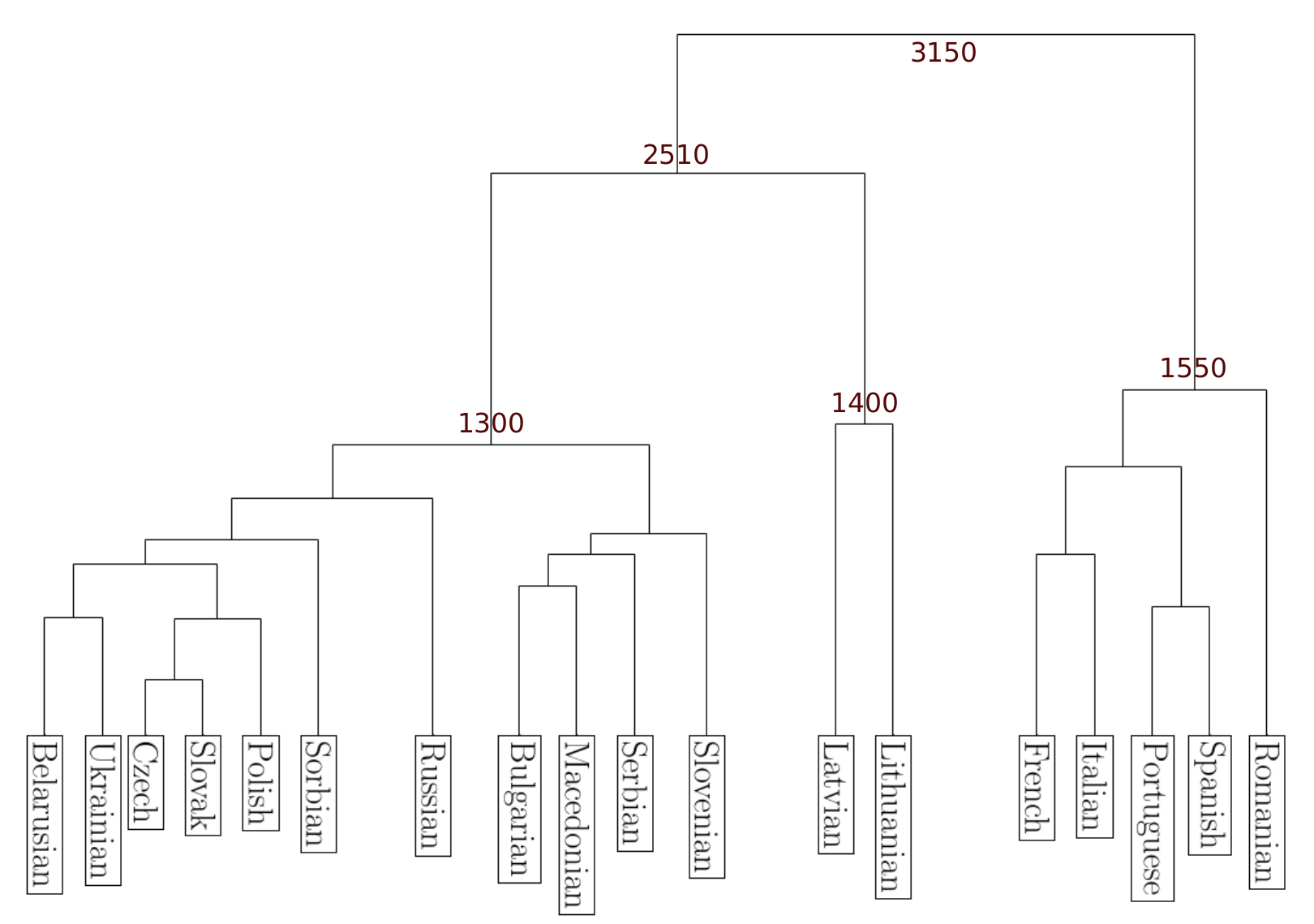} 
	\caption{A 2D chronogram corresponding to 18 selected Indo-European languages. The displayed splitting dates are in years before present.}
	\label{FigSlavicBalticRoman}
\end{figure}

% *****************************************************************************

\section*{Declarations}

The first author was supported by the grant APVV-19-0460, the second author by the grant VEGA 1/0036/23 and the third author by the grant APVV-20-0311. 
The authors have no relevant financial or non-financial interests to disclose.

\end{document}